\documentclass[11pt]{amsart}
\usepackage{color,amssymb, amsbsy,amsmath,amsfonts, amssymb, amscd, epsfig, graphics, mathtools, commath, xcolor}
 
\usepackage{amsthm}
\usepackage{enumitem}
\usepackage[colorlinks=true]{hyperref}
\usepackage{multirow}
\usepackage{tikz}
\usepackage{chemfig}

\marginparwidth 0pt \oddsidemargin 0pt \evensidemargin 0pt
\marginparsep 0pt \topmargin 0pt \textwidth 6.5in
\textheight 8.5in
\newtheorem{theorem}{Theorem}[section]
\newtheorem{proposition}[theorem]{Proposition}

\newtheorem{lemma}[theorem]{Lemma}

\newtheorem{remark}[theorem]{Remark}

\theoremstyle{definition}


\numberwithin{equation}{section}

\numberwithin{theorem}{section}

\DeclareMathOperator{\esssup}{ess\, sup}
\numberwithin{equation}{section}

\newcommand{\blue}[1]{#1}
\newcommand{\bra}[1]{\left(#1\right)}
\newcommand{\R}{\mathbb{R}}

\newcommand{\wh}{\widehat}

\allowdisplaybreaks

\newcommand{\vertiii}[1]{\|\hspace*{-.02in}|#1|\hspace*{-.02in}\|}

    \begin{document}
\title[Reactions networks possessing boundary equilibria]{Stability analysis of irreversible chemical reaction-diffusion systems with boundary equilibria}
\author[L.~Nguyen]{Thi Lien Nguyen}
\author[B.Q.~Tang]{Bao Quoc Tang}
	\address{Thi Lien Nguyen\hfill\break
		 Faculty of Mathematics and Informatics, Hanoi National University of Education\hfill\break
		136 Xuan Thuy, Cau Giay, Hanoi, Vietnam
		}
	\email{ntlien@hnue.edu.vn} 	\address{Bao Quoc Tang \hfill\break
		Department of Mathematics and Scientific Computing, University of Graz \hfill\break
        Heinrichstrasse 36, 8010 Graz, Austria}
	\email{quoc.tang@uni-graz.at, baotangquoc@gmail.com}

	\subjclass[2010]{ 35B40; 35K57; 35Q92; 80A30}
	\keywords{Chemical reaction networks; Reaction-diffusion systems; Convergence to equilibrium; Positive equilibria; Entropy method; Boundary equilibria;  Bootstrap instability}
	\begin{abstract}
        Large time dynamics of reaction-diffusion systems modeling some irreversible reaction networks are investigated. Depending on initial masses, these networks possibly possess boundary equilibria, where some of the chemical concentrations are completely used up. In the absence of these equilibria, we show an explicit convergence to equilibrium by a modified entropy method, where it is shown that reactions in a measurable set with positive measure is sufficient to combine with diffusion and to drive the system towards equilibrium. When the boundary equilibria are present, we show that they are unstable (in Lyapunov sense) using some bootstrap instability technique from fluid mechanics, while the nonlinear stability of the positive equilibrium is proved by exploiting a spectral gap of the linearized operator and the uniform-in-time boundedness of solutions. 
	\end{abstract}
	\maketitle
	\tableofcontents
\section{Introduction}

The study of large time dynamics for reaction-diffusion systems modeling chemical reaction networks has been recently  progressed considerably. This research is partially rooted from the chemical reaction network theory, which was initiated in the seventies with the core idea is to investigate dynamical behavior of large chemical reaction systems using only their network structures, regardless of concrete reaction rate constants, see e.g. the monograph \cite{feinberg2019foundations}. One of the center questions is to determine the convergence towards chemical equilibria, especially for complex balanced or weakly reversible networks. The latter gave rise to the well known Global Attractor Conjecture, which states that, for a large class of reaction systems having a certain structure, the concentrations will evolve towards a positive chemical equilibrium \cite{GAC74}. 
This conjecture has eluded a definitive proof for fifty years, though numerous special cases have been proven, see e.g. \cite{Anderson11,Craciun09,Pantea,Siegel04,Sontag}.
One main obstacle is the existence of (possibly infinitely many) boundary equilibria, which are states where some chemicals are completely used up. It is remarked that all of these aforementioned works considered the spatially homogeneous setting, i.e. the evolution of the network is described by a system of differential equations. When the spatial heterogeneity is taken account, one is led to consider reaction-diffusion systems, whose study is particularly difficult since, mathematically speaking, they are posed in an infinite dimensional space. Even the global existence of a suitable solution is in general unsolved, let alone the large time dynamics. Nevertheless, considerable progress has been obtained when there are no boundary equilibria, see e.g. \cite{glitzky1996free,glitzky1997energetic,groger1986existence,groger1983asymptotic,groger1992free} for qualitative results concerning quadratic systems in one or two dimensions; \cite{desvillettes2006exponential,desvillettes2008entropy,fellner2016exponential} for quantitative results for special systems by using entropy method; or \cite{DFT17,FT18,mielke2015uniform} for general systems, where \cite{FT18} showed exponential convergence to equilibrium for renormalized solutions, which, to our knowledge, is the most general result to date for systems without boundary equilibria. 
When the equilibria are present, only special cases have been investigated. This is not surprising since even in the ODE setting, showing convergence to the positive equilibrium in presence of boundary equilibria is challenging. In \cite{craciun2017convergence,DFT17,FT18}, by exploiting an entropy method and showing that the trajectory could only converge ``slowly'' towards the boundary $\partial\mathbb R_+^m$, some special case of the Global Attractor Conjecture was shown in the PDE setting. In the recent result \cite{Jin}, the (Lyapunov) instability of boundary equilibrium for the reaction network $A + 2B \leftrightarrows B + C$ was shown using some bootstrap instability technique. While (weakly) reversible networks have been extensively investigated, large time dynamics of irreversible networks, especially in the PDE setting is much less studied. Up to our knowledge, the only existing result is the recent work \cite{braukhoff2022quantitative}, where the convergence to equilibrium was proved for the irreversible enzyme reaction using some cut-off entropy method. 

\medskip
In this paper, we investigate stability/instability of some irreversible reaction networks which possess positive and/or boundary equilibria, depending on initial masses. More precisely, we consider for three chemicals named $\mathcal A, \mathcal B, \mathcal{C}$, the following {\it irreversible} reaction network
	\begin{equation}\tag{P1}\label{P1}
		\quad \mathcal A+\mathcal B \xlongrightarrow[ ]{k_1} \mathcal C;\quad \mathcal B+ \mathcal C \xlongrightarrow[ ]{k_2} \mathcal A + 2 \mathcal{B}
	\end{equation}
 where the chemical $\mathcal{B}$ can be considered as a catalyst in the chemical reaction between $\mathcal A$ and $\mathcal{C}$. Therefore, the absence of $\mathcal{B}$ implies that no reaction occurs, which partly explains why the reaction-diffusion system generated by the problem \eqref{P1} has a boundary equilibrium. Interestingly, as we will see shortly, whether or not a boundary equilibrium exists depends on the initial masses. To model \eqref{P1}, we assume that the reactions take place in a bounded domain $\Omega\subset\mathbb R^n$  and without loss of generality, by re-scaling  $x\to |\Omega|^{1/n}x,$ we may assume that $|\Omega| =1$, which {\it we will assume throughout this paper}. Denote by $a(x,t),b(x,t),c(x,t)$ the concentrations of substances $\mathcal{ A}, \mathcal{B}, \mathcal{C}$, respectively, at $(x,t)\in\Omega\times\mathbb R_+.$
 Using a suitable rescaling, we can normalize the reaction rate constants, i.e. $k_1 = k_2=1$. By using the mass action law, the corresponding reaction-diffusion system for \eqref{P1} with respect to  homogeneous Neumann boundary condition then reads as
\begin{equation}\label{p1}
   \begin{cases}
 a_t-d_1\Delta a=b(c-a),  &\text { in } \Omega \times\mathbb R_+,\\
 b_t-d_2 \Delta b = b(c-a), &\text { in } \Omega \times\mathbb R_+,\\
 c_t-d_3\Delta c = -b(c-a),  &\text { in } \Omega \times\mathbb R_+,\\
 \nabla a \cdot \nu = \nabla b\cdot \nu = \nabla c\cdot \nu =0, &\text { on } \partial\Omega \times\mathbb R_+\\
 a(x,0)=a_0(x);b(x,0)=b_0(x);c(x,0)=c_0(x),  &\text { in } \Omega,
\end{cases} 
\end{equation}
where $d_1,d_2,d_3$ are positive diffusion coefficients and  $\nu$ is outward normal unit vector on $\partial\Omega$. 
Consider the Wegscheider  matrix (or stoichiometric matrix) of \eqref{P1}
\[
W= \begin{pmatrix} -1 & -1&1 \\1  &1&-1 \end{pmatrix}
\]
 we have $\dim (\ker (W))=2.$ Therefore,  there are exactly two linearly independent conservation laws for solutions of \eqref{p1}, which can be obtained by using the homogeneous Neumann boundary conditions,
\[
\dfrac{d}{d t}   \int_\Omega \left (a(x,t) +c(x,t)\right )dx   =0\quad \text{ and } \quad \dfrac{d}{d t}   \int_\Omega \left (b(x,t) +c(x,t)\right )dx   =0,
\]
and thus
\begin{equation} \label{cl}
\begin{aligned}
      \int_\Omega (a(x,t)+ c(x,t) )dx &= \int_\Omega (a_0(x) +c_0(x) )dx :=M_1, \quad \forall t>0,\\
    \int_\Omega (b(x,t)+ c(x,t) )dx &= \int_\Omega (b_0(x) +c_0(x) )dx :=M_2, \quad \forall t>0,
\end{aligned}
\end{equation}
where  $M_1,M_2>0$ are the initial masses.

A chemical equilibrium for \eqref{p1} for given initial data, is a spatially homogeneous state $(a_\infty,b_\infty,c_\infty)$ that balances the reactions and satisfies the conservation laws \eqref{cl}, i.e.
\begin{equation*}
    \begin{cases}
        a_\infty+c_\infty=M_1,\\
        b_\infty+c_\infty=M_2,\\
        b_\infty(c_\infty-a_\infty)=0.
    \end{cases}
\end{equation*}
Naturally, we are only interested in non-negative equilibria. It is interesting that the existence of the boundary equilibrium and the positive equilibrium depends on the relationship between $M_1$ and $M_2$. Indeed, from the third relation, we have $b_\infty = 0$ or $a_{\infty} = c_\infty$. If $b_\infty=0$ then $c_\infty=M_2.$  Thus, in this case, the non-negative boundary equilibrium $  (\bar a_\infty,\bar b_\infty,\bar c_\infty)= (M_1-M_2,0,M_2)$ exists if and only if $M_1\geq M_2$. If $a_\infty=c_\infty$ then $b_\infty=M_2-\frac{M_1}{2}$ is positive only if $M_1<2M_2.$ Thus, in this case, if $M_1<2M_2, $ the problem \eqref{p1} has a positive equilibrium $  (a^*_\infty,b^*_\infty,c^*_\infty)=(\frac{M_1}{2},M_2-\frac{M_1}{2} , \frac{M_1}{2})$.
The Table below gives the summary of equilibrium points for the problem \eqref{P1}.  
\begin{table}[h!]
\centering
\begin{tabular}{|p{3cm}|p{4.5cm}|p{4.5cm}|p{3 cm}|}
\hline
  & $M_1<M_2$ & $M_2\leq M_1<2M_2$ & $M_1\geq 2M_2$ \\ \hline
Positive equil. &    $(M_1/2,M_2-M_1/2,M_1/2)$ & $(M_1/2,M_2-M_1/2,M_1/2)$ &  non-existence \\ \hline
Boundary equil. & non-existence     &  $ (M_1-M_2,0,M_2)$ & $(M_1-M_2,0,M_2)$ \\ \hline
\end{tabular}
\end{table}
We will deal with these three cases separately.

\medskip
\begin{itemize}
    \item When $M_1 < M_2$, since there exists only one positive equilibrium $(a_\infty^*, b_\infty^*, c_\infty^*) = (M_1/2, M_2 - M_1/2, M_1/2)$, it is natural to expect that this equilibrium is attracting all solutions. Yet it is not straightforward due to the irreversibility of the system, which does not give an apparent dissipative structure. Our idea is to consider a relative entropy functional but \textit{only for $a$ and $c$}, 
    \begin{equation}\label{relative_entropy_ac}
        \mathcal{E}(a,c|a^*_\infty,c^*_\infty)=\int_\Omega \left(a\ln \dfrac{a}{a^*_\infty}-a+a^*_\infty   +     c\ln \dfrac{c}{c^*_\infty}-c+c^*_{\infty}\right)dx
    \end{equation}
    which dissipates along trajectories with the dissipation
    \begin{equation}\label{entropy_dissipation}
\begin{aligned}
     \mathcal{D}(a,c) = -\dfrac{d  }{dt}\mathcal{E}(a,c|a^*_\infty,c^*_\infty) 
      =d_1 \int_\Omega \dfrac{|\nabla a|^2}{a} dx +d_3\int_\Omega \dfrac{|\nabla c|^2}{c} dx + \int_\Omega b(c-a)\ln \dfrac{c}{a}dx\ge 0.
\end{aligned}
\end{equation}
If one would have a uniform positive pointwise lower bound $b(x,t) \ge \underline{b}>0$ for all $(x,t)\in\Omega\times \R_+$, then an entropy-entropy dissipation inequality of the form $\mathcal{D}(a,c) \ge \lambda \mathcal{E}(a,c|a_\infty^*,c_\infty^*)$ follows immediately from e.g. \cite{FT18}. Unfortunately, showing such a lower bound of $b$ is challenging, and it is not clear if such a bound is indeed true. Therefore, we resort to a different strategy. From the conservation laws, we get $\int_{\Omega}b(x,t)dx \ge M_2 - M_1 >0$. This, in combination with the boundedness of $b$, yields that for each $t$, $b(\cdot,t)$ is bounded below in some measurable subset $\omega(t)$ of $\Omega$ with positive measure. Still this set $\omega(t)$ depends on $t$ and might be very rough. To overcome this issue, we observe that a lower bound of $b$ on $\omega(t)$ means both reactions in \eqref{P1} happen in the subset $\omega(t)$. This is enough, using the ideas from the recent work \cite{DDB24}, to obtain an entropy-entropy dissipation inequality, where $\omega(t)$ is required only to be {\it measurable} and the constants in the inequality depends only on the measure of $\omega(t)$ but not its regularity. Thus, we obtain first the explicit, exponential convergence of $a$ and $c$ towards equilibrium, and the convergence of $b$ follows from that and the conservation laws.

\medskip
\item When $M_1 \ge 2M_2$, due to the existence of only boundary equilibrium $(\bar a_\infty, \bar b_\infty, \bar c_\infty) = (M_1 - M_2, 0, M_2)$, we also expect it to be globally attracting. However, obtaining a global, explicit convergence towards this equilibrium is challenging. We are able, nevertheless, to show first a qualitative convergence $(a(t), b(t), c(t)) \to (\bar a_\infty, \bar b_\infty, \bar c_\infty)$ as $t\to\infty$ using the dissipation of a suitable energy function. Then, when the solution is close enough to $(\bar a_\infty, \bar b_\infty, \bar c_\infty)$, we show that the convergence becomes exponential by showing a spectral gap for the linear part, which in turn dominates the nonlinear part in the large time.

\medskip
\item When $M_2 \le M_1 < 2M_2$, there are both positive and boundary equilibria for \eqref{p1}. We are then in a situation similar to the Global Attractor Conjecture. For the positive equilibrium, we can use again a linearization to show its local, exponential stability thanks to a spectral gap and boundedness of solutions. To deal with boundary equilibrium, showing some ``slow convergence towards boundary'' as in \cite{DFT17,FT18} seems not possible. Instead, we will show that this boundary equilibrium is {\it unstable in the sense of Lyapunov}. 
In order to do that, we exploit the bootstrap instability technique, that is initiated for problems in fluid mechanics \cite{Gou1} and has been applied to chemotaxis systems  in \cite{JMB17},\cite{JDE23},\cite{SIAM23}, and also to reversible chemical reaction-diffusion systems with boundary equlibrium in \cite{Jin}. The main idea of this method is to use the precise, fast growth of the linear part to show that, on the time scale of the instability, a stronger (Sobolev) norm of the solution is dominated by its weaker norm \cite{Gou1}.

It should be emphasized that, this Lyapunov instability is not enough to conclude the global convergence towards to positive equilibrium, since we cannot yet rule out possible convergence towards the boundary. Nevertheless, the bootstrap instability is sufficiently robust and can be applied to other problems to obtain the instability of boundary equilibria, as shown in the following parts. These results certainly bring more insights for dynamics of chemical reaction networks around boundary equilibria, especially in the PDE setting.
\end{itemize}

\medskip
To demonstrate the robustness of the bootstrap instability technique, we consider another reaction network consisting of the following reactions
\begin{equation}\label{P2}\tag{P2}
 \mathcal A+\mathcal B \xlongrightarrow[ ]{k_3} 2 \mathcal C;\  \mathcal B+ \mathcal C \xlongrightarrow[ ]{k_4} 2 \mathcal A;\  \mathcal C+\mathcal A \xlongrightarrow[ ]{k_5} 2 \mathcal B.
 \end{equation}
 which yields the following reaction-diffusion system, by assuming again for simplicity $k_3 = k_4 = k_5 = 1$ \begin{equation}\label{eq1'}
		\begin{cases}
			\partial_t a-d_1\Delta a=-ab-ac+2bc,\quad &\text { in } \Omega \times\mathbb R_+,\\
			\partial_t b-d_2 \Delta b = -ba-bc+2ac,\quad &\text { in } \Omega \times\mathbb R_+,\\
			\partial_t c-d_3\Delta c = -ca-cb+2ab, \quad &\text { in } \Omega \times\mathbb R_+,\\
   \nabla a\cdot \nu = \nabla b\cdot \nu = \nabla c\cdot \nu =0, &\text { on } \partial\Omega \times\mathbb R_+\\
 a(x,0)=a_0(x);b(x,0)=b_0(x);c(x,0)=c_0(x),  &\text { in } \Omega,
		\end{cases} 
	\end{equation}
Unlike \eqref{P1}, the reaction network \eqref{P2} possess certain symmetry, and therefore still features a decreasing Boltzmann entropy (see Remark \ref{remark:Boltzmann_entropy}). Yet it is not sufficient to get global convergence to positive equilibrium due to the existence of boundary equilibria, i.e. we are in the same situation as \eqref{P1} for the case $M_2 \le M_1 < 2M_2$. By summing up the equations in \eqref{eq1'} and using the homogeneous Neumann boundary condition, it yields the conservation of total mass
\begin{equation*} 
    \int_\Omega (a(x,t)+b(x,t)+c(x,t) )dx = \int_\Omega (a_0(x)+b_0(x)+c_0(x) )dx :=M,
\end{equation*}
where  $M>0$ is the initial total mass. It can be checked with Wegscheider matrix that it is the only conservation of mass for this system. By looking at the equilibrium system
\begin{equation*}
    \begin{cases}
        -a_\infty b_\infty - a_\infty c_\infty + 2b_\infty c_\infty =0,\\
        -b_\infty a_\infty - b_\infty c_\infty + 2a_\infty c_\infty =0,\\
        a_\infty + b_\infty + c_\infty = M,
    \end{cases}
\end{equation*}
we see that, for any given positive initial total mass $M>0$, \eqref{eq1'} possesses a unique positive equlibrium  $(a^*_\infty,b^*_\infty,c^*_\infty)=(M/3,M/3,M/3)$ and three non-negative  boundary equilibria $ (\bar a_\infty,\bar b_\infty,\bar c_\infty) \in \{(0,0,M); (M,0,0); (0,M,0)\}$. We will show the local exponential stability of the positive equilbirium by linearization, while the Lyapunov instability of the boundary equilibria will be obtained using the bootstrap instability techniques.

\medskip
{\bf The organization of  this paper.} In the next section, the system \eqref{p1} investigated, where the three cases of equilibria depending on initial mass are considered in subsections \ref{subsec:entropy}, \ref{subsection:qualitative} and \ref{subsection:bootstrap} respectively. Section \ref{sec:symmetric} is devoted to the stability analysis of system \eqref{eq1'}.

\medskip
{\bf Notations.}
 \begin{itemize}
     \item For $1\leq p< +\infty,$ denote by $L^p(\Omega)$ the usual Lebesgue spaces with norm 
     \[
     \norm{u}_p=\left (\int_\Omega |u(x)|^pd x \right )^{1/p}.
     \]
     For $p=\infty$, the norm of $L^\infty(\Omega)$ is denoted by
     \[
      \norm{u}_\infty= \esssup_{x\in \Omega} |u (x)|.
     \]
     \item For   $k\in \mathbb N,$ denote by $H^k(\Omega)$ the classical Sobolev space consisting of  measurable functions $u$  for which its weak derivatives  $D^ju$ up to order $k$ have a finite $L^2(\Omega)$ norm. The norm in $H^k(\Omega)$ as follows
     \[\norm{u}_{H^k(\Omega)} = \left (\sum_{j=0}^k \norm{D^j u }_2^2\right )^{1/2}.\]
     \item For $u: \Omega \to \mathbb R,$ denote by 
     \[
     \bar u =\int_\Omega u(x)dx
     \]
     the spatial average of $u$ (noting that $|\Omega|=1$).
     \item We use the same notation $C$ for different constants throughout this paper.
 \end{itemize}
 
\section{Catalytic-type reactions}
We start with the global existence and boundedness of solutions to \eqref{p1}. We will assume the following 
\begin{enumerate}[label=(A\theenumi),ref=A\theenumi]
	\item \label{A1} $\Omega \subset \mathbb R^n$ is a bounded domain and has smooth boundary $\partial\Omega$ (e.g. belong to $C^{2+\upsilon}$ for some $\upsilon$ positive);
	\item \label{A2} $a_{0},b_0,c_0\in   L^\infty(\Omega)$ and $a_0,b_0,c_0\geq 0$ a.e. in $\Omega$.
\end{enumerate}
\begin{theorem}\label{thm:global-existence}
    Assume \eqref{A1}--\eqref{A2}. Then there exists a unique global classical solution in the following sense: for any $T>0$,
    \begin{equation*}
        a,b,c\in C([0,T),L^p(\Omega) )\cap C^{1,2}((0,T)\times \overline{\Omega}), \quad \forall 1\le p < +\infty,
    \end{equation*}
  satisfying each equation in \eqref{p1} pointwise. Moreover, this solution is bounded uniformly in time, i.e. there is a positive constant $\mathcal{K}$ such that
  \begin{equation}\label{uit-boundedness}
 \norm{a(t)}_{ L^\infty(\Omega)} , \ \norm{b(t)}_{ L^\infty(\Omega)},\ \norm{c(t)}_{ L^\infty(\Omega)}\leq \mathcal K,  \text{ for all } t\ge 0.
 \end{equation}
\end{theorem}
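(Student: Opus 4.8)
The plan is to follow the classical scheme \emph{local classical solution} $\Rightarrow$ \emph{nonnegativity} $\Rightarrow$ \emph{global a priori bound} $\Rightarrow$ \emph{uniform-in-time $L^\infty$ bound}. Since the reaction terms $f_1=f_2=b(c-a)$ and $f_3=-b(c-a)$ are polynomial, hence smooth and locally Lipschitz on bounded sets, standard parabolic theory (analytic semigroups generated by the Neumann Laplacian on $L^p(\Omega)$, or Amann's results) yields a unique maximal classical solution on $[0,T_{\max})$ with the regularity stated, together with the blow-up criterion: either $T_{\max}=+\infty$, or $\limsup_{t\to T_{\max}^-}\bigl(\|a(t)\|_\infty+\|b(t)\|_\infty+\|c(t)\|_\infty\bigr)=+\infty$. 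Thus it suffices to bound these $L^\infty$ norms on every finite interval.

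Next I would verify that the system is \emph{quasi-positive}: for nonnegative $(a,b,c)$ one has $f_1=bc\ge0$ when $a=0$, $f_2=0$ when $b=0$, and $f_3=ab\ge0$ when $c=0$. Together with \eqref{A2}, an invariant-region / comparison argument then shows $a,b,c\ge0$ are preserved on $[0,T_{\max})$. Consequently the conservation laws \eqref{cl} give the uniform $L^1$ control $\int_\Omega a,\ \int_\Omega b,\ \int_\Omega c\le\max\{M_1,M_2\}$ for all $t$.

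The core of the boundedness is to exploit the mass structure hidden in the partial sums. Adding the first and third equations of \eqref{p1}, the nonnegative quantity $S:=a+c$ satisfies $S_t=\Delta(d_1a+d_3c)=\Delta(\psi S)$ with $\psi:=(d_1a+d_3c)/S\in[\min\{d_1,d_3\},\max\{d_1,d_3\}]$ and conserved mass $\int_\Omega S=M_1$; likewise $b+c$ solves a conservation equation with coefficient bounded between positive constants and conserved mass $M_2$. Applying the duality lemma (Pierre) to these nonnegative quantities under homogeneous Neumann data yields $\|a+c\|_{L^2(\Omega\times(0,T))}$ and $\|b+c\|_{L^2(\Omega\times(0,T))}$ bounded, whence by nonnegativity $a,b,c\in L^2(\Omega\times(0,T))$, and through the improved (self-iterating) duality even $L^{2+\varepsilon}$. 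Then the quadratic source $b(c-a)$ lies in some $L^{1+\varepsilon'}(\Omega\times(0,T))$, and feeding this into the heat-semigroup smoothing estimates (equivalently, an $L^p$-energy / Moser iteration) for each scalar equation raises the integrability of $a,b,c$ step by step to $L^\infty(\Omega\times(0,T))$. This closes the a priori bound and gives $T_{\max}=+\infty$.

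Finally, for the \emph{uniform-in-time} estimate \eqref{uit-boundedness}, I would rerun the same duality-plus-bootstrap argument on sliding unit windows $[\tau,\tau+1]$: the conservation laws furnish $L^1$ bounds uniform in $\tau$, and the improved duality together with the parabolic smoothing over a unit window produces an $L^\infty$ bound on $[\tau+\tfrac12,\tau+1]$ independent of $\tau$; alternatively one may invoke the general uniform-in-time boundedness theorems for quadratic, mass-controlled, quasi-positive reaction--diffusion systems. The main obstacle is precisely this last passage with \emph{unequal} diffusion coefficients: if $d_1=d_2=d_3$ the sums $a+c$ and $b+c$ solve the heat equation and the maximum principle gives the $L^\infty$ bound immediately, whereas for distinct $d_i$ the maximum principle fails and one genuinely needs the duality method to convert the conserved $L^1$ mass into spacetime $L^2$ control before bootstrapping to $L^\infty$.
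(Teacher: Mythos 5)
The paper's own proof is a one-line delegation: it observes that the nonlinearities of \eqref{p1} are quadratic, quasi-positive and mass-controlled, and cites \cite{FMT21} for both global existence and the uniform-in-time bound \eqref{uit-boundedness}. Your proposal instead tries to reconstruct the machinery behind such a citation, and most of it is sound: the local existence/blow-up alternative, the quasi-positivity check ($f_1=bc$ at $a=0$, $f_2=0$ at $b=0$, $f_3=ab$ at $c=0$), the observation that $a+c$ and $b+c$ solve $\partial_t S=\Delta(\psi S)$ with $\psi$ pinched between the diffusion constants, and the application of Pierre's duality lemma (improved to $L^{2+\varepsilon}$) are all correct and are indeed the standard ingredients. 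Your closing remark that one may ``invoke the general uniform-in-time boundedness theorems for quadratic, mass-controlled, quasi-positive reaction--diffusion systems'' is, in fact, literally the paper's proof.

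The genuine gap is in the bootstrap from $L^{2+\varepsilon}(\Omega\times(0,T))$ to $L^\infty$. With a quadratic source, if $a,b,c\in L^{p_k}$ then $b(c-a)\in L^{p_k/2}$, and maximal parabolic regularity plus Sobolev embedding gives $L^{p_{k+1}}$ with $1/p_{k+1}=2/p_k-2/(n+2)$; this iteration increases the exponent only when $p_k>(n+2)/2$. Since the $\varepsilon$ produced by improved duality is small, the starting exponent $2+\varepsilon$ lies below the critical value $(n+2)/2$ as soon as $n\ge 3$, and the ``step by step'' scheme stalls rather than reaching $L^\infty$. Theorem \ref{thm:global-existence} is stated for a general bounded $\Omega\subset\mathbb R^n$ (the dimension restriction $n\in\{1,2,3\}$ appears only later, for the bootstrap-instability theorems), so this step cannot be waved through: closing it in higher dimensions is precisely the content of \cite{FMT21}, whose argument is an $L^p$-energy estimate tailored to quadratic, mass-dissipating systems rather than a naive integrability ladder. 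A similar (fixable, but unaddressed) issue appears in your sliding-window argument for \eqref{uit-boundedness}: the duality estimate on $[\tau,\tau+1]$ needs $L^2$ control of the data at time $\tau$, while the conservation laws \eqref{cl} only give $L^1$ control uniformly in $\tau$, so an intermediate $L^1\to L^2$ smoothing step is required. In short: your outline is the right circle of ideas and coincides with the paper's route for $n\le 2$, but for the theorem as stated you should either restrict the dimension or, as the paper does, appeal directly to \cite{FMT21}.
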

\begin{proof}
    Since the nonlinearities $f_z(\cdot), z\in \{a,b,c\}$ of \eqref{p1} are quadratic, quasi-positive, and satisfies a mass conservation condition, the global existence and uniform-in-time boundedness of a unique classical solution follows from \cite{FMT21}.
\end{proof}

\subsection{Stability of positive equilibrium - Entropy method}\label{subsec:entropy}
In this section, we consider $M_1 < 2M_2,$ then system \eqref{p1} has a   positive equilibrium $ (a^*_\infty,b^*_\infty,c^*_\infty)=(\frac{M_1}{2},M_2-\frac{M_1}{2},\frac{M_1}{2})$.  We consider the relative entropy functional for $a$ and $c$ in \eqref{p1} as
\[
\mathcal{E}(a,c|a^*_\infty,c^*_\infty)=\int_\Omega \left(a\ln \dfrac{a}{a^*_\infty}-a+a^*_\infty   +     c\ln \dfrac{c}{c^*_\infty}-c+c^*_{\infty}\right)dx.
\]
Then, along the trajectory of \eqref{p1}, the entropy dissipation functional is
\begin{equation}\label{t3'}
\begin{aligned}
     \mathcal{D}(a,c) &= -\dfrac{d  }{dt}\mathcal{E}(a,c|a^*_\infty,c^*_\infty) \\
     & =d_1 \int_\Omega \dfrac{|\nabla a|^2}{a} dx +d_3\int_\Omega \dfrac{|\nabla c|^2}{c} dx + \int_\Omega b(c-a)\ln \dfrac{c}{a}dx\\
\end{aligned}
\end{equation}
 Noticing that if we can bound $b$ from below even in a subset $\omega\subset \Omega$ with positive measure, i.e., we only use the   information of the reaction on a measurable subset $\omega$, we can obtain the   exponential convergence to the positive equilibrium. When $M_1<M_2,$ using the mass conservation law, we can show this property for $b$. We need the following important lemma.
 \begin{lemma}\label{lem:degenerate_reaction}
    Let $a, b, c:\Omega\to \R_+$ be functions such that $\|a\|_{\infty}, \|b\|_{\infty}, \|c\|_{\infty}\le \mathcal K$ and $a,b,c$ satisfy the conservation laws \eqref{cl}. Fix a constant $\delta \in (0,1)$ and assume that $\omega\subset \Omega$ is a measurable set with $|\omega| \ge \delta$. Then there exists a constant $\lambda$ depending only on $\mathcal K$, $\delta$, $a_\infty^*$, $b_\infty^*$, $c_\infty^*$, and $\Omega$ such that
    \begin{equation}\label{eq:b1}
        \int_{\Omega}\frac{|\nabla a|^2}{a}dx + \int_{\Omega}\frac{|\nabla c|^2}{c}dx + \int_{\omega}(c-a)^2dx \ge \lambda \mathcal{E}(a, c|a_\infty^*, c_\infty^*).
    \end{equation}
 \end{lemma}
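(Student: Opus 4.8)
The plan is to bound $\mathcal{E}(a,c|a_\infty^*,c_\infty^*)$ from above by the three terms on the left of \eqref{eq:b1}; then \eqref{eq:b1} holds with $\lambda$ the reciprocal of the resulting constant. I would start from the standard additivity of the relative entropy around spatial averages: writing $\mu:=a_\infty^*=c_\infty^*=M_1/2$ and $\Phi(s):=s\ln(s/\mu)-s+\mu$, and using $|\Omega|=1$,
\begin{equation*}
\mathcal{E}(a,c|a_\infty^*,c_\infty^*)=\int_\Omega a\ln\frac{a}{\bar a}\,dx+\int_\Omega c\ln\frac{c}{\bar c}\,dx+\Phi(\bar a)+\Phi(\bar c)=:\mathcal{E}_{\mathrm{osc}}+\mathcal{E}_{\mathrm{avg}},
\end{equation*}
and estimate the oscillation part $\mathcal{E}_{\mathrm{osc}}$ and the average part $\mathcal{E}_{\mathrm{avg}}$ separately.

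For $\mathcal{E}_{\mathrm{osc}}$, the logarithmic Sobolev inequality on the bounded domain $\Omega$ (with respect to normalized Lebesgue measure) gives $\int_\Omega u\ln(u/\bar u)\,dx\le C_{\mathrm{LSI}}\int_\Omega |\nabla u|^2/u\,dx$ for $u\in\{a,c\}$ with $C_{\mathrm{LSI}}=C_{\mathrm{LSI}}(\Omega)$, so $\mathcal{E}_{\mathrm{osc}}$ is immediately dominated by the two Fisher-information terms. For $\mathcal{E}_{\mathrm{avg}}$, I would use the conservation law \eqref{cl}, which forces $\bar a+\bar c=M_1=2\mu$. Putting $s:=(\bar a-\bar c)/2\in[-\mu,\mu]$ (note $\bar a,\bar c\ge 0$), the average part becomes the one-variable function
\begin{equation*}
g(s):=\Phi(\mu+s)+\Phi(\mu-s)=(\mu+s)\ln\frac{\mu+s}{\mu}+(\mu-s)\ln\frac{\mu-s}{\mu},
\end{equation*}
which is convex with $g(0)=g'(0)=0$ and stays finite at $s=\pm\mu$; hence $g(s)/s^2$ extends continuously to the compact interval $[-\mu,\mu]$ and is bounded there, yielding $\mathcal{E}_{\mathrm{avg}}=g(s)\le C_\mu s^2=\tfrac{C_\mu}{4}(\bar a-\bar c)^2$ with $C_\mu$ depending only on $\mu$.

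It then remains to control $(\bar a-\bar c)^2$ by the localized reaction term, which is the only genuine difficulty and the place where the idea of \cite{DDB24} enters. Decomposing $c-a=(\bar c-\bar a)+[(c-\bar c)-(a-\bar a)]$ and applying $(p+q)^2\ge\tfrac12 p^2-q^2$ pointwise, then integrating over $\omega$ and using $|\omega|\ge\delta$,
\begin{equation*}
\int_\omega(c-a)^2\,dx\ge\frac{\delta}{2}(\bar c-\bar a)^2-\int_\Omega\big[(c-\bar c)-(a-\bar a)\big]^2\,dx.
\end{equation*}
The last integral is at most $2\int_\Omega(c-\bar c)^2\,dx+2\int_\Omega(a-\bar a)^2\,dx$, and by the Poincaré inequality together with the bounds $a,c\le\mathcal K$ one has $\int_\Omega(a-\bar a)^2\,dx\le C_P\int_\Omega|\nabla a|^2\,dx\le C_P\mathcal K\int_\Omega|\nabla a|^2/a\,dx$, and likewise for $c$. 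Rearranging bounds $(\bar a-\bar c)^2$ by $\tfrac{2}{\delta}\int_\omega(c-a)^2\,dx$ plus a multiple (with constant $\tfrac{4C_P\mathcal K}{\delta}$) of the two Fisher-information terms. Crucially, only $|\omega|\ge\delta$ is used and no regularity of $\omega$ enters, since the fluctuations $a-\bar a$ and $c-\bar c$ are controlled globally by Poincaré irrespective of where $\omega$ sits.

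Chaining the three estimates bounds $\mathcal{E}=\mathcal{E}_{\mathrm{osc}}+\mathcal{E}_{\mathrm{avg}}$ by a constant multiple of $\int_\Omega|\nabla a|^2/a\,dx+\int_\Omega|\nabla c|^2/c\,dx+\int_\omega(c-a)^2\,dx$, with the constant depending only on $\mathcal K$, $\delta$, $\mu$ (hence on the equilibrium values) and on the log-Sobolev and Poincaré constants of $\Omega$; taking $\lambda$ to be its reciprocal gives \eqref{eq:b1}. I expect the main obstacle to be precisely the localization in the preceding paragraph, namely replacing $\int_\Omega(c-a)^2$ by $\int_\omega(c-a)^2$ while keeping the constant dependent only on $\delta=|\omega|$ and not on the shape of $\omega$; all remaining steps are routine entropy-method manipulations.
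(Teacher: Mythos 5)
Your proof is correct, and it reaches the same destination as the paper's by a partly different road. The paper does not split the entropy additively: it applies the elementary pointwise inequality $x\ln(x/y)-x+y\le (x-y)^2/y$ to dominate the whole of $\mathcal{E}(a,c|a_\infty^*,c_\infty^*)$ by $\frac{1}{a_\infty^*}\int_\Omega\left[(a-a_\infty^*)^2+(c-c_\infty^*)^2\right]dx$, then runs a chain of triangle inequalities — writing $2a-2a_\infty^*=(a-\bar a)+(a-c)+(c-\bar c)$ via the conservation law, and expressing $|\bar a-a_\infty^*|^2$ as $\frac{1}{|\omega|}\int_\omega(\bar a-a_\infty^*)^2dx$ — to land on $\|a-\bar a\|_2^2+\|c-\bar c\|_2^2+\int_\omega(c-a)^2dx$, finishing with Poincar\'e--Wirtinger and $a,c\le\mathcal K$. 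You instead use the additive decomposition into oscillation and average parts, the logarithmic Sobolev inequality for the former, and a one-variable convexity/compactness bound $g(s)\le C_\mu s^2$ for the latter; your localization step, which controls $(\bar a-\bar c)^2$ by $\frac{2}{\delta}\int_\omega(c-a)^2dx$ plus Poincar\'e-controlled fluctuations, is the same mechanism the paper uses (and is indeed the crux, as you say), just executed at the level of averages rather than pointwise. The trade-off: the paper's route is entirely elementary, needing only Poincar\'e--Wirtinger and the $L^\infty$ bound, whereas yours invokes the LSI on $\Omega$ (valid for bounded smooth domains, but a heavier tool and worth a citation); in exchange, your decomposition isolates cleanly where the conservation law and the set $\omega$ enter — only in the finite-dimensional average part — and both routes give the same qualitative dependence $1/\lambda\sim C(1+1/\delta)$.
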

 \begin{remark}\hfill
 \begin{itemize}
     \item It is remarked that we do not assume any regularity on the subdomain $\omega$ except for its measurability. This is important in the application of this lemma in the next part, since we will not know any regularity of $\omega$. 
     \item Following the proof of this lemma, we can see that
     \begin{equation*}
         \frac{1}{\lambda} = C\bra{1 + \frac{1}{\delta}},
     \end{equation*}
     for a constant $C$ independent of $\omega$, which means that the inequality \eqref{eq:b1} breaks down reactions do not happen anywhere, i.e. $|\omega| = 0$.
 \end{itemize}
 \end{remark}
    \begin{proof}
        By applying elementary inequality $x\ln \dfrac{x}{y} -x+y \leq \dfrac{1}{y}(x-y)^2$ for all $x,y>0$, we see that 
        \begin{equation}\label{t4}
           \mathcal{E}(a,c|a^*_\infty,c^*_\infty) \leq \dfrac{1}{a^*_\infty} \int_\Omega \left( (a-a^*_\infty)^2+(c-c^*_\infty)^2   \right)dx. 
        \end{equation}
        Using  the conservation law $2a^*_\infty =2c^*_\infty=\bar a +\Bar c$ and  triangle inequality, we see that  
        \[
        (a-a^*_\infty)^2 =\dfrac{1}{4}(2a -2a^*_\infty)^2 =\dfrac{1}{4}(2a-\bar a -\bar c)^2 \leq \dfrac{1}{4} \left [(a-\bar a)^2 + (a-c)^2 +(c-\bar c)^2 \right],
        \]
        which implies that
        \begin{equation}\label{t5}
            \int_\omega (a-a^*_\infty)^2 dx \leq \dfrac{1}{4} \left ( \norm{a-\bar a}^2_{2} +  \norm{c-\bar c}^2_{2} +\int_\omega (a-c)^2dx\right).
        \end{equation}
        Using the triangle inequality, we estimate
        \begin{equation}\label{t6}
        \begin{aligned}
            \int_\Omega (a-a^*_\infty)^2 dx &=  \norm{a-\bar a}^2_{2} + | \bar a -a^*_\infty|^2 = \norm{a-\bar a}^2_{2} +\dfrac{1}{|\omega| }\int_\omega (\bar a -a^*_\infty)^2 dx \\
            &\leq   \norm{a-\bar a}^2_{2} +\dfrac{2}{|\omega| }\int_\omega (a-\bar a  )^2 dx + \dfrac{2}{ |\omega|}\int_\omega (a -a^*_\infty)^2 dx \\
            & \leq  \left(1+ \dfrac{2}{|\omega|}\right) \norm{a-\bar a}^2_{2} + \dfrac{2}{|\omega|}\int_\omega (a -a^*_\infty)^2 dx.
        \end{aligned}
        \end{equation}
        Combining \eqref{t5} and \eqref{t6}, we derive
        \begin{equation}\label{t7}
            \int_\Omega (a-a^*_\infty)^2 dx \leq \bra{1 + \frac{5}{2|\omega|}}\|a - \bar{a}\|_2^2 +\dfrac{1}{2|\omega|} \norm{c-\bar c}^2_{2} + \dfrac{1}{2|\omega|}\int_\omega (c-a)^2 dx  .
        \end{equation}
        Likewise,
        \begin{equation}\label{t8}
            \int_\Omega (c-c^*_\infty)^2 dx \leq \dfrac{1}{2|\omega|}   \norm{a-\bar a}^2_{2}+ \bra{1 + \frac{5}{2|\omega|}}\norm{c-\bar c}^2_{2} +\dfrac{1}{2|\omega|} \int_\omega (c-a)^2 dx  .
        \end{equation}
        It follows then from \eqref{t4}, \eqref{t7} and \eqref{t8}
        \begin{equation*}
        \begin{aligned}
            \mathcal{E}(a, c|a_{\infty}^*, c_{\infty}^*) &\le \frac{1}{a_\infty^*}\bra{\bra{1+\frac{3}{|\omega|}}\|a - \bar a\|_{2}^2 + \bra{1 + \frac{3}{|\omega|}}\norm{c - \bar c}_2^2 + \frac{1}{|\omega|}\int_{\omega}(c-a)^2dx}\\
            &\le \frac{1}{a_\infty^*}\bra{1 + \frac{3}{|\omega|}}\bra{\norm{a-\bar a}_2^2 + \norm{c - \bar c}_2^2 + \int_{\omega}(c-a)^2dx}.
        \end{aligned}
        \end{equation*}
        Next, thanks to  the Poincar\'e-Wirtinger inequality and the uniform boundedness of $a$, we can estimate
        \begin{equation}\label{t9}
            \norm{a-\bar a}^2_{2} \leq P(\Omega)\int_\Omega | \nabla  a|^2dx \leq \mathcal{K} P(\Omega) \int_\Omega \dfrac{|\nabla a|^2}{a} dx,
        \end{equation}
        where $P(\Omega)$ is the constant in the Poincar\'e-Wirtinger inequality.
        Similarly,
        \begin{equation}\label{t10}
            \norm{c-\bar c}^2_{2}   \leq \mathcal{K} P(\Omega)  \int_\Omega \dfrac{|\nabla c|^2}{c} dx.
        \end{equation}
        Therefore, we finally have
        \begin{equation*}
            \begin{aligned}
                &\int_{\Omega}\frac{|\nabla a|^2}{a}dx + \int_{\Omega}\frac{|\nabla c|^2}{c}dx + \int_{\omega}(c-a)^2dx\\
                &\ge \min\{1; (\mathcal{K}P(\Omega))^{-1}\}\bra{\norm{a-\bar a}_2^2 + \norm{c - \bar c}_2^2 + \int_{\omega}(c-a)^2dx}\\
                &\ge \min\{1; (\mathcal{K}P(\Omega))^{-1}\}a_\infty^*\bra{1+\frac{3}{|\omega|}}^{-1}\mathcal{E}(a,c|a_{\infty}^*,c_{\infty}^*)\\
                &\ge \lambda \mathcal{E}(a,c|a_{\infty}^*,c_{\infty}^*)
            \end{aligned}
        \end{equation*}
        with $$\frac{1}{\lambda} = \frac{1}{\min\{1; (\mathcal{K}P(\Omega))\}a_\infty^*}\bra{1+\frac{3}{\delta}},$$
        which is the desired estimate.
    \end{proof}
\begin{theorem}\label{thm2}
		Assume that $M_1<M_2$ and     \eqref{A1}-\eqref{A2} hold, which implies that the problem \eqref{p1} only has the positive equilibrium $(a^*_\infty,b^*_\infty,c^*_\infty).$ Then the solution to \eqref{p1} globally exponentially converges to this equilibrium. That means, there exist positive constants $C,\alpha_1$ such that 
  \begin{equation}\label{t1}
      \norm{a(t)-a^*_\infty}^2_2+\norm{b(t)-b^*_\infty}^2_2+\norm{c(t)-c^*_\infty}^2_2 \leq C e^{-\alpha_1 t}, \text{ for all }t\geq 0.
  \end{equation}
	\end{theorem}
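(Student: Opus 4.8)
The plan is to run a modified entropy method on the pair $(a,c)$ only, reduce everything to the entropy--entropy dissipation (EED) inequality furnished by Lemma~\ref{lem:degenerate_reaction}, and then recover $b$ a posteriori from the conservation laws together with its own equation. The decisive point is that the reaction integral $\int_\Omega b(c-a)\ln\frac{c}{a}\,dx$ in the dissipation \eqref{t3'} cannot be bounded below via a pointwise lower bound of $b$ (which is not available), but only via a lower bound on a measurable set; Lemma~\ref{lem:degenerate_reaction} is tailored precisely for this, being insensitive to the geometry of that set.

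First I would produce a time-uniform measurable set on which $b$ is bounded below. Subtracting the two conservation laws in \eqref{cl} and using $a\ge 0$ gives $\int_\Omega b(\cdot,t)\,dx \ge M_2-M_1=:m>0$ for all $t$. Setting $\omega(t)=\{x:\ b(x,t)\ge m/2\}$ and using $|\Omega|=1$ together with $\|b\|_\infty\le\mathcal K$ from \eqref{uit-boundedness}, the splitting $m\le \int_{\omega(t)}b\,dx+\int_{\Omega\setminus\omega(t)}b\,dx\le \mathcal K|\omega(t)|+\tfrac{m}{2}$ yields $|\omega(t)|\ge \delta:=\tfrac{m}{2\mathcal K}\in(0,1)$. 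On $\omega(t)$ the integrand is controlled from below: by the elementary inequality $(x-y)\ln\frac{x}{y}\ge \mathcal K^{-1}(x-y)^2$ for $x,y\in(0,\mathcal K]$ (mean value theorem), together with $b\ge m/2$ on $\omega(t)$ and the pointwise nonnegativity of $(c-a)\ln\frac{c}{a}$, one gets $\int_\Omega b(c-a)\ln\frac{c}{a}\,dx\ge \frac{m}{2\mathcal K}\int_{\omega(t)}(c-a)^2\,dx$. Inserting this into \eqref{t3'} and applying Lemma~\ref{lem:degenerate_reaction} with this uniform $\delta$ — whose constant $\lambda$ depends only on $\mathcal K,\delta,\Omega$ and the equilibrium, \emph{not} on the rough, $t$-dependent set $\omega(t)$ — produces a uniform EED inequality $\mathcal D(a,c)\ge \alpha\,\mathcal E(a,c\,|\,a_\infty^*,c_\infty^*)$ with $\alpha=\lambda\min\{d_1,d_3,\tfrac{m}{2\mathcal K}\}$. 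Since $\mathcal D=-\frac{d}{dt}\mathcal E$, Gr\"onwall gives $\mathcal E(a(t),c(t)\,|\,a_\infty^*,c_\infty^*)\le \mathcal E(a_0,c_0\,|\,a_\infty^*,c_\infty^*)\,e^{-\alpha t}$; then a Csisz\'ar--Kullback--Pinsker inequality (uniform constant, as the masses are bounded) plus the interpolation $\|u-u_\infty\|_2^2\le \|u-u_\infty\|_\infty\,\|u-u_\infty\|_1\le 2\mathcal K\,\|u-u_\infty\|_1$ upgrades this to exponential decay of $\|a-a_\infty^*\|_2^2+\|c-c_\infty^*\|_2^2$. (A minor point: the dissipation identity requires $a,c>0$, which holds for $t>0$ by the strong maximum principle, and the range $t\in[0,\tau]$ is absorbed into the constant $C$.)

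It remains to deduce the convergence of $b$. Its spatial average is handled by the conservation law, since $\bar b(t)-b_\infty^*=-(\bar c(t)-c_\infty^*)$ already decays exponentially by the previous step. For the fluctuation $\tilde b:=b-\bar b$ I would run an energy estimate on the $b$-equation written as $\tilde b_t-d_2\Delta\tilde b=g-\bar g$ with $g:=b(c-a)$: multiplying by $\tilde b$, using the Poincar\'e--Wirtinger inequality $\|\nabla\tilde b\|_2^2\ge P(\Omega)^{-1}\|\tilde b\|_2^2$ and Young's inequality gives $\frac12\frac{d}{dt}\|\tilde b\|_2^2\le -\frac{d_2}{2P(\Omega)}\|\tilde b\|_2^2+C\|g\|_2^2$. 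Crucially $\|g\|_2\le \mathcal K\|c-a\|_2$ is exponentially small because $c_\infty^*=a_\infty^*$, so $\|c-a\|_2\le\|c-c_\infty^*\|_2+\|a-a_\infty^*\|_2$ decays exponentially; Gr\"onwall then yields $\|\tilde b\|_2^2\le Ce^{-\beta t}$ for some $\beta>0$. Combining with the decay of $\bar b-b_\infty^*$ gives $\|b-b_\infty^*\|_2^2\le Ce^{-\alpha_1 t}$, which together with the $a,c$ estimates establishes \eqref{t1}.

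I expect the main obstacle to be exactly the step that Lemma~\ref{lem:degenerate_reaction} is designed to overcome: because $b$ admits no pointwise lower bound, the only dissipation extractable from the reaction lives on a set $\omega(t)$ that is time-dependent and of unknown regularity, so the EED inequality must hold with a constant insensitive to the geometry of $\omega(t)$ (depending only on its measure $\delta$). A secondary difficulty is that $b$ carries no direct dissipative structure, since the entropy \eqref{relative_entropy_ac} involves only $a$ and $c$; its convergence must therefore be bootstrapped from the $a,c$ decay through the conservation law and the forced-heat-equation estimate above.
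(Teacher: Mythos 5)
Your proposal is correct and follows essentially the same route as the paper: the same construction of the time-dependent set $\omega(t)$ from the conservation laws and the $L^\infty$ bound, the same mean-value-theorem reduction of the reaction term, Lemma \ref{lem:degenerate_reaction} as the key EED inequality, Gr\"onwall, and then a bootstrap of $b$ from the exponential decay of $\|c-a\|_2$ (using $a_\infty^*=c_\infty^*$) via an energy estimate with Poincar\'e--Wirtinger and the conservation law. The only cosmetic differences are that the paper converts entropy decay to $L^2$ decay via the elementary inequality $x\ln\frac{x}{y}-x+y\ge(\sqrt{x}-\sqrt{y})^2$ rather than Csisz\'ar--Kullback--Pinsker, and estimates $\|w_2\|_2^2$ directly instead of splitting $b$ into mean and fluctuation.
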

\begin{proof}
Since $M_1<M_2,$ we infer from the mass conservation laws \eqref{cl} that \[
\int_\Omega b(x,t) dx= \int_\Omega a(x,t) dx + M_2 -M_1 \geq M_2 - M_1 >0. 
\]
Fix $t>0$. Denote by $\omega:= \{x\in \Omega: b(x,t) \ge (M_2-M_1)/2\} \subset \Omega$. We have 
\begin{align*}    
    M_2 - M_1 \le \int_{\Omega}b(x,t)dx &= \int_{\omega}b(x,t)dx + \int_{\Omega\backslash \omega}b(x,t)dx\\
    &\le \mathcal{K}|\omega| + \frac{M_2-M_1}{2}|\Omega\backslash\omega| \le \mathcal{K}|\omega| + \frac{M_2-M_1}{2},
\end{align*}
and therefore
\begin{equation}\label{lower_bound}
    |\omega| \ge \frac{M_2 - M_1}{2\mathcal{K}}.
\end{equation}
We can estimate the entropy functional $\mathcal{D}(a,c)$  from \eqref{t3'} that
\begin{equation}\label{t3}
\begin{aligned}
     \mathcal{D}(a,c) &= -\dfrac{d  }{dt}\mathcal{E}(a,c|a^*_\infty,c^*_\infty) \\
     & =d_1 \int_\Omega \dfrac{|\nabla a|^2}{a} dx +d_3\int_\Omega \dfrac{|\nabla c|^2}{c} dx + \int_\Omega b(c-a)\ln \dfrac{c}{a}dx\\
     &\geq d_1 \int_\Omega \dfrac{|\nabla a|^2}{a} dx +d_3\int_\Omega \dfrac{|\nabla c|^2}{c} dx + \frac{M_2-M_1}{2} \int_\omega  (c-a)\ln \dfrac{c}{a}dx\\
     &\geq \min\left\{ d_1, d_2, \frac{M_2-M_1}{2\mathcal{K}}\right\}\bra{ \int_\Omega \dfrac{|\nabla a|^2}{a} dx +\int_\Omega \dfrac{|\nabla c|^2}{c} dx +\int_\omega  (c-a)^2dx},
\end{aligned}
\end{equation}
where, in the last estimate, we used the  Mean Value Theorem $ \ln \dfrac{c}{a} = \ln c  - \ln a=\dfrac{c-a}{\xi},$ where $\xi$ is between $a$ and $c,$ then,  $|\xi|\leq \mathcal K$ due to the uniform boundedness  of $a$ and $c.$ Thanks to \eqref{lower_bound}, we can apply Lemma \ref{lem:degenerate_reaction} to have
\begin{equation*}
    \mathcal{D}(a, c) \ge \min\left\{ d_1, d_2, \frac{M_2-M_1}{2\mathcal{K}}\right\}\lambda \mathcal{E}(a, c|a_\infty^*, c_\infty^*) =: \kappa_1 \mathcal{E}(a, c|a_\infty^*, c_\infty^*)
\end{equation*}
where {\it the constant $\lambda$ depends only on $\mathcal K, M_2 - M_1, a_\infty^*, b_\infty^*, c_\infty^*$ and $\Omega$}. 
By Gronwall's inequality, one gets
\begin{equation*} 
     \mathcal{E}(a,c|a^*_\infty,c^*_\infty) \leq e^{-\kappa_1 t} \mathcal{E}(a_0,c_0|a^*_\infty,c^*_\infty),\quad \text{for all } t\geq 0.
\end{equation*}
Then,  the elementary inequality \[
x\ln\dfrac{x}{y} -x +y \geq (\sqrt x -\sqrt y)^2,\quad \text{for all }x,y>0,
\]
and the uniformly boundedness of $a,c$ imply that
\begin{equation}\label{t2}
\begin{aligned}
    \norm{a(t)-a^*_\infty}_2^2+\norm{c(t)-c^*_\infty}_2^2 &\leq 4\mathcal{K} \int_\Omega [(\sqrt{a(t)}-\sqrt{a^*_\infty} )^2+(\sqrt{c(t)}-\sqrt{c^*_\infty} )^2]dx\\
    &\leq 4\mathcal{K}  \mathcal{E}(a,c|a^*_\infty,c^*_\infty) \leq C_1 e^{-\kappa_1 t} ,
    \end{aligned}
\end{equation}
where $C_1=4\mathcal{K} \mathcal{E}(a_0,c_0|a^*_\infty,c^*_\infty)$ is a positive constant.

\medskip
In order to estimate $ \norm{b(t)-b^*_\infty}_2^2,$ let us denote by  a new variable $w=(a-a^*_\infty,b-b^*_\infty, c-c^*_\infty)$ noting that $a^*_\infty=c^*_\infty,$ the system for $w$ can be written  as 
	\begin{equation}\label{ep2}
		\begin{cases}
			\partial_t w_1 -d_1\Delta w_1=(w_2+b^*_\infty)(w_3-w_1),&\text { in } \Omega \times\mathbb R_+,\\
			\partial_t w_2 -d_2\Delta w_2=(w_2+b^*_\infty)(w_3-w_1),&\text { in } \Omega \times\mathbb R_+,\\
			\partial_t w_3 -d_3\Delta w_3=-(w_2+b^*_\infty)(w_3-w_1),&\text { in } \Omega \times\mathbb R_+,\\
			\nabla w_i \cdot \nu =0,\ 1\leq i\leq 3, &\text { on } \partial\Omega \times\mathbb R_+,\\
			w(x,0)=(a_0(x)-a^*_\infty,b_0(x)-b^*_\infty,c_0(x)-c^*_\infty),&\text { in } \Omega.
	\end{cases}\end{equation}
 The conservation laws for $w$ read as
 \begin{equation}\label{cl1}
     \int_\Omega (w_1+w_3)dx=\int_\Omega (w_2+w_3)dx=0.
 \end{equation}
Multiplying both sides of the second equation in \eqref{ep2} by $w_2,$ then integrating over $\Omega,$ we have
 \begin{equation*}
     \begin{aligned}
      \dfrac{1}{2}   \dfrac{d}{dt} \norm{w_2}^2_2 +d_2\norm{\nabla w_2}^2_2 &=\int_\Omega w_2(w_2+b^*_\infty)(w_3-w_1)dx\\
      &\leq \mathcal{K} \int_\Omega  |w_2||w_3-w_1|dx\\
      & \leq \mathcal{K} \norm{w_2}_2 \norm{w_3-w_1}_2\\
      &\leq \mathcal{K} (\rho \norm{w_2}_2^2 +\dfrac{1}{4\rho}\norm{w_3-w_1}_2^2 )\\
      &\leq \mathcal{K} (\rho \norm{w_2}_2^2 +\dfrac{C_1}{4\rho} e^{-\kappa_1 t} ) \quad (\text{by \eqref{t2}},
     \end{aligned}
 \end{equation*}
where $\rho>0$ is a small constant which will be chosen later. Furthermore, the Poincar\'e-Wirtinger inequality and the conservation law \eqref{cl1} give
\[
\norm{\nabla w_2}_2^2 \geq P(\Omega) \int_\Omega |w_2-\bar{w}_2|^2dx = P(\Omega)\left( \norm{w_2}_2^2 - \bar{w}_2^2  \right) =  P(\Omega)\left( \norm{w_2}_2^2 - \bar{w}_3^2  \right).
\]
 Consequently,
\begin{equation}\label{t111}
  \dfrac{d}{dt}\norm{w_2}_2^2 \leq  - (2d_2P(\Omega)-2\mathcal{K} \rho )\norm{w_2}_2^2 + 2d_2P(\Omega) \bar{w}_3^2 +\dfrac{\mathcal{K}C_1}{2\rho}   e^{-\kappa_1 t}
\end{equation}
By Holder's inequality and \eqref{t2}, one gets
\[
\bar{w}_3^2 =\left(\int_\Omega w_3 dx\right)^2\leq \int_\Omega w_3^2 dx =\norm{c-c_\infty^*}_2^2 \leq C_1e^{-\kappa_1 t}.
\]
Now choosing $\rho=\dfrac{d_2P(\Omega)}{2\mathcal{K}}$, it follows from \eqref{t111} that
\begin{equation}\label{t0}
  \dfrac{d}{dt}\norm{w_2}_2^2 \leq  -\kappa_2 \norm{w_2}_2^2   +C_2    e^{-\kappa_1 t} ,
\end{equation}
where $ \kappa_2 =\min\{\frac{\kappa_1}{2}, d_2 P(\Omega)\} $ and $C_2= \dfrac{C_1\mathcal{K}^2}{d_2P(\Omega)}+2C_1d_2P(\Omega)$. From this
\begin{equation}\label{t12}
\begin{aligned}
\norm{w_2(t)}_2^2 & \leq  \norm{w_2(\cdot,0)}_2^2 e^{-\kappa_2 t} + \dfrac{C_2}{\kappa_1-\kappa_2}(e^{-\kappa_2 t}- e^{-\kappa_1 t})
\leq C_3 e^{-\kappa_2 t},
\end{aligned}
\end{equation}
where $C_3=\norm{w_2(\cdot,0)}_2^2+ \dfrac{C_2}{\kappa_1-\kappa_2}. $

Combining \eqref{t2} and \eqref{t12}, we verify \eqref{t1} with $\alpha_1 =\min \{\kappa_2,\kappa_1 \}=\kappa_2$ and $C=C_1+C_3.$
The proof is completed.
\end{proof}

\subsection{Stability of boundary equilibrium - Linearization}\label{subsection:qualitative}
This section is devoted to investigate the stability of the boundary equilibrium $(\bar a_\infty,0,\bar c_\infty)=(M_1-M_2,0,M_2)$ in the case $M_1 \geq 2M_2.$   Denoting new variable $u(t)=(a(t)-\bar a_\infty,b(t),c(t)-\bar c_\infty) $ and   $\lambda =\bar a_\infty - \bar c_\infty \geq 0,$
 then the system for $u$ can be written  as 
	\begin{equation}\label{s1}
		\begin{cases}
			\partial_t u_1 -d_1\Delta u_1=u_2(u_3-u_1-\lambda),&x\in \Omega,\quad t>0,\\
			\partial_t u_2 -d_2\Delta u_2=u_2(u_3-u_1-\lambda),&x\in \Omega,\quad t>0,\\
			\partial_t u_3 -d_3\Delta u_3=-u_2(u_3-u_1-\lambda),&x\in \Omega,\quad t>0,\\
			\nabla u_i \cdot \nu =0,\ 1\leq i\leq 3, &x\in\partial \Omega,\ t>0,\\
			u_0(x)=(a_0(x)-\bar a_\infty,b_0(x),c_0(x)-\bar c_\infty),& x\in\Omega.
	\end{cases}\end{equation}
 And the conservation laws for $u$ read as
 \begin{equation}\label{s2}
     \int_\Omega (u_1+u_3)dx=\int_\Omega (u_2+u_3)dx=0.
 \end{equation}
    We first prove the global, qualitative stability of the boundary equilibrium.
  \begin{proposition}\label{pro:qualitative_stability}      
    Assume $M_1 \ge 2M_2$. Then 
    \begin{equation*}
        \lim_{t\to\infty}\bra{\|a(t) - \bar a_\infty\|_{2} + \|b(t)\|_2 + \|c(t) - \bar c_\infty\|_{2}} = 0.
    \end{equation*}
  \end{proposition}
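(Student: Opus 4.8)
The plan is to reuse the relative entropy functional $\mathcal{E}(a,c\,|\,a_\infty^*,c_\infty^*)$ from \eqref{relative_entropy_ac} with the reference values $a_\infty^* = c_\infty^* = M_1/2$ as a Lyapunov function, \emph{even though} $(M_1/2, M_2-M_1/2, M_1/2)$ is no longer a nonnegative equilibrium when $M_1 \ge 2M_2$. The key observation is that the functional and its dissipation identity \eqref{t3'} remain perfectly well defined, and since $(c-a)\ln\frac{c}{a}\ge 0$ and $b\ge 0$, the reaction term in \eqref{t3'} is nonnegative. Hence $\mathcal{D}(a,c) = -\frac{d}{dt}\mathcal{E}\ge 0$, so $\mathcal{E}(t)$ is non-increasing and bounded below by $0$; it therefore converges to a limit and $\int_0^\infty \mathcal{D}(a(t),c(t))\,dt < \infty$.

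From integrability and nonnegativity of $\mathcal{D}$ I would pick $t_k\to\infty$ with $\mathcal{D}(a(t_k),c(t_k))\to 0$. Using $\norm{a}_\infty,\norm{c}_\infty\le\mathcal{K}$, the diffusion terms force $\norm{\nabla a(t_k)}_2,\norm{\nabla c(t_k)}_2\to 0$, so by Poincar\'e--Wirtinger $\norm{a(t_k)-\bar a(t_k)}_2,\norm{c(t_k)-\bar c(t_k)}_2\to 0$; and the mean value bound $(c-a)\ln\frac{c}{a}\ge \mathcal{K}^{-1}(c-a)^2$ yields $\int_\Omega b(t_k)(c(t_k)-a(t_k))^2\,dx\to 0$. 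The conservation laws \eqref{cl} give $\bar a = \bar a_\infty + \bar b$ and $\bar c = \bar c_\infty - \bar b$, so $\bar c - \bar a = 2M_2 - M_1 - 2\bar b$. Passing to a further subsequence with $\bar b(t_k)\to\beta\ge 0$, the homogenization forces $c-a\to L:=2M_2-M_1-2\beta$ in $L^2$, and then $\int_\Omega b(c-a)^2\to L^2\beta$. Since $M_1\ge 2M_2$ gives $L\le -2\beta\le 0$, the identity $L^2\beta = 0$ forces $\beta = 0$. Consequently $\norm{b(t_k)}_2^2\le\mathcal{K}\,\bar b(t_k)\to 0$, and together with homogenization we get $a(t_k)\to\bar a_\infty$, $b(t_k)\to 0$, $c(t_k)\to\bar c_\infty$ in $L^2$.

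To upgrade sequential to full convergence I would invoke LaSalle's invariance principle. By Theorem \ref{thm:global-existence} the orbit is bounded in $L^\infty$; parabolic smoothing with bounded right-hand sides then provides a uniform-in-time bound in $C^{1+\alpha}(\overline\Omega)$ for $t\ge 1$, so the trajectory is relatively compact in $L^2(\Omega)^3$ and possesses a nonempty invariant $\omega$-limit set on which $\mathcal{E}$ equals its limit value. Invariance together with $\mathcal{D}\ge 0$ forces every $\omega$-limit point $(a_*,b_*,c_*)$ to satisfy $\mathcal{D}\equiv 0$ along its orbit: the diffusion terms make $a_*,c_*$ spatially constant and the reaction term gives $b_*(c_*-a_*)\ln\frac{c_*}{a_*}=0$. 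If $a_*\ne c_*$ then $b_*\equiv 0$ and \eqref{cl} pins $(a_*,b_*,c_*)=(\bar a_\infty,0,\bar c_\infty)$; if $a_*=c_*$ then $a_*=c_*=M_1/2$ and \eqref{cl} gives $\bar b_* = M_2-M_1/2\le 0$, whence $b_*\equiv 0$ and $M_1=2M_2$, again the boundary equilibrium. Thus the $\omega$-limit set is the single point $(\bar a_\infty,0,\bar c_\infty)$, and a precompact trajectory with singleton $\omega$-limit set converges to it in $L^2$, which is the assertion.

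The main obstacle is that the dissipation is \emph{degenerate in} $b$: it controls only $\int_\Omega b(c-a)^2$, which vanishes on $\{a=c\}$ and gives no direct control of $b$ there. The decisive step is therefore to exclude limits with $a_*=c_*$ and $b_*>0$, and this is exactly where the mass relation $M_1\ge 2M_2$ (equivalently $\bar a_\infty\ge\bar c_\infty$, which makes $L\le 0$) is used to force $b\to 0$. A secondary technical point is the parabolic regularity needed for precompactness of the orbit, which is standard given the uniform $L^\infty$ bound from Theorem \ref{thm:global-existence}.
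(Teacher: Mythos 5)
Your proposal is correct, and it reaches the conclusion by a genuinely different route than the paper. The paper works with the plain $L^2$ energy $\|a\|_2^2+\|c\|_2^2$ of the $(a,c)$-subsystem rather than the relative entropy; from its decay it extracts $\int_0^\infty(\|\nabla a\|_2^2+\|\nabla c\|_2^2)\,dt<\infty$, upgrades this to $\|\nabla a(t)\|_2,\|\nabla c(t)\|_2\to 0$ by a Barbalat-type argument, passes to $L^\infty$-convergence of $a$ and $c$ to constants $a^*,c^*$ via the Duhamel formula and heat-semigroup $L^p$--$L^\infty$ estimates, and then kills $b$ by a case distinction: if $a^*=c^*$ the conservation laws force $\lim_{t\to\infty}\int_\Omega b\,dx = M_2-M_1/2\le 0$, while if $a^*\ne c^*$ then necessarily $a^*>c^*$ and $b$ obeys $b_t-d_2\Delta b\le-\tfrac{a^*-c^*}{2}b$ for large times. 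Your argument replaces this entire second half by the LaSalle invariance principle: precompactness of the orbit (uniform $L^\infty$ bound plus parabolic smoothing) and constancy of $\mathcal E$ on the $\omega$-limit set force every limit point to satisfy $\mathcal D=0$, and the identification of such states with $(\bar a_\infty,0,\bar c_\infty)$ is the short algebraic computation with the conservation laws and $M_1\ge 2M_2$ (your $L^2\beta=0$ together with $L\le-2\beta\le 0$). Both proofs rest on the same dissipative skeleton — a convex Lyapunov functional for the $(a,c)$-subsystem whose dissipation controls the gradients and $\int_\Omega b(c-a)^2\,dx$, combined with the sign condition $M_1\ge 2M_2$ — but yours is softer and in one respect more robust: it never needs to establish convergence of the spatial average $\bar a(t)$ before identifying the limit (a step the paper passes over rather quickly), at the price of invoking continuity of the semiflow on the orbit closure to obtain invariance of the $\omega$-limit set, which you correctly flag as standard. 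Two minor caveats, neither of which is a gap relative to the paper's own level of rigor: the dissipation identity involves $\ln(c/a)$, so one should note that $a,c>0$ for $t>0$ (strong maximum principle; under $M_1\ge 2M_2>0$ one has $a_0\not\equiv 0$), and your second paragraph (subsequential convergence) is logically subsumed by the invariance-principle argument and could be omitted.
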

  \begin{proof}
      Multiplying the equations of $a$ and $c$ with $a$ and $c$ in $L^2(\Omega)$, respectively, summing the resultants, then integrating on $(0,t)$ give
      \begin{equation*}
      \begin{aligned}
          \|a(t)\|_2^2 + \|c(t)\|_2^2 &+ 2d_1\int_0^t\|\nabla a(s)\|_{2}^2ds + 2d_3\int_0^t\|\nabla c(s)\|_{2}^2ds\\
          &+ 2\int_0^t\int_{\Omega}b(s)(c(s)-a(s))^2dxds = \|a_0\|_{2}^2 + \|c_0\|_2^2.
      \end{aligned}
      \end{equation*}
      Thanks to the non-negativity of $b$, we get
      \begin{equation}\label{c1}
          \int_0^{\infty}\|\nabla a(s)\|_2^2ds + \int_0^{\infty}\|\nabla c(s)\|_2^2ds < +\infty.
      \end{equation}
      Thanks to the smoothness of classical solutions, we have for any $t_0>0$ $(a, b, c) \in (C^{1}([t_0, \infty); H^1(\Omega)))^3$. This implies that $\|\nabla a(\cdot)\|_2^2, \|\nabla c(\cdot)\|_2^2 \in C^1([t_0, \infty); L^2(\Omega))$. Combining this with \eqref{c1} gives
      \begin{equation*}
          \lim_{t\to\infty}\bra{\|\nabla a(s)\|_{2}^2 + \|\nabla c(s)\|_{2}^2} = 0.
      \end{equation*}
      This implies that there are constants $a^*\ge 0$ and $c^*\ge 0$ such that 
      \begin{equation*}
          \lim_{t\to \infty}\bra{\|a(t) - a^*\|_2+\|c(t) - c^*\|_2} = 0.
      \end{equation*}
      Thanks to the $L^{\infty}(\Omega)$-boundedness and H\"older's inequality, we have for any $p \in (2,\infty)$,
      \begin{equation}
          \label{Lp_convergence} \lim_{t\to\infty}\bra{\|a(t) - a_\infty\|_{p} + \|c(t) - c_\infty\|_{p}} = 0.
      \end{equation}
      Denote by $\{S_a(t) = e^{d_1\Delta t}\}_{t\ge 0}$  the semigroup generated by the operator $d_1\Delta$ with homogeneous Neumann boundary condition. We then have the estimate
      \begin{equation*}
          \|S_a(t)f\|_{{\infty}} \le C\bra{1 + t^{-n/(2p)}}\|f\|_{p} \quad \forall t>0,
      \end{equation*}
      where $C$ is independent of $t$ and $f$. Then, by using the Duhamel formula, we have
      \begin{equation*}
          a(t+1) = S_a(1)a(t) + \int_0^1S_a(1-s)b(t+s)(c(t+s)-a(t+s))ds 
      \end{equation*}
      and thus
      \begin{equation*}
      \begin{aligned}
          \|a(t+1)-a^*\|_{{\infty}} &\le \|S_a(1)(a(t) - a^*)\|_{{\infty}} + \int_0^1\|S_a(1-s)b(t+s)(c(t+s)-a(t+s))\|_{{\infty}}\ ds\\
          &\le C\|a(t) - a^*\|_{{p}} + C\mathcal K\int_0^1\bra{1+(1-s)^{-n/(2p)}}\|c(t+s) - a(t+s)\|_{p}\ ds.
      \end{aligned}
      \end{equation*}
      By choosing $p>n/2$, sending $t\to\infty$, and using \eqref{Lp_convergence}, we obtain
      \begin{equation}\label{Linf_convergence_a}
          \lim_{t\to\infty}\|a(t) -a^*\|_{\infty} = 0.
      \end{equation}
      Similarly,
      \begin{equation}\label{Linf_convergence_c}
          \lim_{t\to\infty}\|c(t) -c^*\|_{\infty} = 0.
      \end{equation}
      From the conservation laws we get
      \begin{equation*}
          a^* + c^* = M_1 \quad \text{ and } \quad c^* + \lim_{t\to\infty}\int_{\Omega}b(x,t)dx = M_2.
      \end{equation*}
      We consider two cases:
      \begin{itemize}
          \item If $a^* = c^*$, then 
          \begin{equation*}
              \lim_{t\to\infty}\int_{\Omega}b(x,t)dx = M_2 - c^*  = M_2 - \frac{M_1}{2} \le 0.
          \end{equation*}
          Since $b(x,t) \ge 0$ a.e. $x\in\Omega$, it follows that
          \begin{equation*}
              \lim_{t\to\infty}\|b(t)\|_1 = \lim_{t\to\infty}\int_{\Omega}b(x,t)dx = 0.
          \end{equation*}
          Using the uniform-in-time boundedness of $b(t)$ in $L^\infty(\Omega)$, we obtain immediately
          \begin{equation*}
              \lim_{t\to\infty}\|b(t)\|_2 = 0.
          \end{equation*}
          \item If $a^* \ne c^*$, then from $M_1 \ge 2M_2$
          \begin{equation*}
              a^* \ge c^* + 2\lim_{t\to\infty}\int_{\Omega}b(x,t)dx \ge c^*,
          \end{equation*}
          and thus $a^* > c^*$. Thanks to \eqref{Linf_convergence_a} and \eqref{Linf_convergence_c}, there exists $\hat{t} > 0$ large enough such that
          \begin{equation*}
              a(x,t) - c(x,t) \ge \frac{a^* - c^*}{2} > 0, \quad \forall t\ge \hat{t}, \forall x\in\Omega.
          \end{equation*}
          Therefore, for $t\ge \hat{t}$,
          \begin{equation*}
              b_t - d_3\Delta b = b(c-a) \le -\frac{a^*-c^*}{2}b.
          \end{equation*}
          A standard argument leads to
          \begin{equation*}
              \lim_{t\to\infty}\|b(t)\|_2 = 0.
          \end{equation*}
      \end{itemize}
      In both cases, we always have
      \begin{equation*}
          \lim_{t\to\infty}\|b(t)\|_2 = 0.
      \end{equation*}
      From the conservation laws, obtain finally
      \begin{equation*}
          c^* = M_2 = \bar c_\infty \quad \text{ and } \quad a^* = M_1 - M_2 = \bar a_\infty,
      \end{equation*}
      which completes the proof of the Proposition.
  \end{proof}
  
  In the next result, we show that the boundary equilibrium is {\it locally exponentially} stable.
\begin{theorem} \label{thm5}
Fixed $M_1>2M_2.$ There exists a sufficiently small constant $\epsilon>0$ \blue{depending on $M_1, M_2, \mathcal K$, $\Omega$, $d_1, d_2, d_3$} such that 
    \[
    \norm{a_0-\bar a_\infty}_2^2+\norm{b_0}_2^2+\norm{c_0-\bar c_\infty }_2^2 \leq \epsilon,
    \]
then the the boundary equilibrium $(\bar a_\infty,0,\bar c_\infty)=(M_1-M_2,0,M_2)$ to the problem \eqref{P1} is locally  exponentially asymptotically stable. That means, there are positive constants $C$ and $\alpha_3$ such that for all $t\geq 0,$ the following estimate holds
\begin{equation*}
     \norm{a(t)-\bar a_\infty}_2^2+\norm{b(t)}_2^2+\norm{c(t)-\bar c_\infty }_2^2 \leq C e^{-\alpha_3 t}  .
\end{equation*}
\end{theorem}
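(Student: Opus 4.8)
The plan is to analyze the shifted system \eqref{s1} for $u=(u_1,u_2,u_3)=(a-\bar a_\infty,\,b,\,c-\bar c_\infty)$, the key structural feature being that $\lambda=\bar a_\infty-\bar c_\infty=M_1-2M_2>0$ is now \emph{strictly} positive (this is where $M_1>2M_2$, not merely $M_1\ge 2M_2$, enters). Linearizing the second equation of \eqref{s1} around $u=0$ gives $\partial_t u_2-d_2\Delta u_2=-\lambda u_2+u_2(u_3-u_1)$, so $b=u_2$ is \emph{linearly damped at rate $\lambda$}, while the reaction in the $u_1$- and $u_3$-equations is quadratically small. The conservation laws \eqref{s2} force $\bar u_3=-\bar u_2$ and $\bar u_1=-\bar u_3=\bar u_2$, so the spatial averages of $u_1,u_3$ are slaved to that of $b$ and inherit its decay, whereas their oscillating parts are damped by diffusion via Poincar\'e--Wirtinger. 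At the linear level these facts produce a spectral gap $\rho=\min\{\lambda,\,d_1/P(\Omega),\,d_3/P(\Omega)\}>0$ on the subspace cut out by \eqref{s2}, and I expect this gap to dictate the rate $\alpha_3$.

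To make this nonlinear I would run $L^2$ energy estimates. Testing the $u_2$-equation by $u_2$, using $u_2=b\ge 0$, gives $\frac{d}{dt}\norm{u_2}_2^2\le -\lambda\norm{u_2}_2^2+\frac{\norm{u_2}_\infty^2}{\lambda}\norm{u_3-u_1}_2^2$. Testing the $u_1$- and $u_3$-equations by $u_1,u_3$, discarding the favorable terms $-\int u_1^2u_2\le 0$ and $-\int u_3^2u_2\le0$, and using Poincar\'e--Wirtinger together with $\bar u_1^2,\bar u_3^2\le\norm{u_2}_2^2$ from \eqref{s2}, yields
\begin{equation*}
\frac{d}{dt}\bra{\norm{u_1}_2^2+\norm{u_3}_2^2}\le -c_1\bra{\norm{u_1}_2^2+\norm{u_3}_2^2}+c_2\norm{u_2}_2^2+C\norm{u_2}_\infty\bra{\norm{u_1}_2^2+\norm{u_3}_2^2},
\end{equation*}
with $c_1=c_1(d_1,d_3,\Omega)>0$. \textbf{The crucial observation} is that every genuinely nonlinear term carries a factor $\norm{u_2}_\infty=\norm{b}_\infty$; bounding it merely by $\mathcal K$ from \eqref{uit-boundedness} leaves coefficients too large to be absorbed by $c_1$ or $\lambda$, so the naive $L^2$ energy method fails. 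Everything therefore reduces to showing that $\norm{b}_\infty$ is in fact \emph{small} and decaying.

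For that I would use a bootstrap/continuity argument fueled by the self-damping of $b$. Wherever $\norm{u_1}_\infty,\norm{u_3}_\infty\le\lambda/4$ one has $c-a=u_3-u_1-\lambda\le-\lambda/2$, so $b$ satisfies the scalar inequality $b_t-d_2\Delta b\le-\tfrac{\lambda}{2}b$, and the maximum principle gives $\norm{b(t)}_\infty\le e^{-\lambda(t-s)/2}\norm{b(s)}_\infty$. To produce small $L^\infty$ data I would first note that the conclusion permits a free constant $C$ (at $t=0$ it reads $\norm{u_0}_2^2\le C$, automatic), so it suffices to get exponential decay for $t\ge\tau$ for a fixed $\tau>0$: on $[0,\tau]$ the energies grow by at most $e^{C\tau}$ and stay $\le C\epsilon$, and parabolic smoothing for the Neumann semigroup (with the source $b(c-a)$ kept small in $L^2$ because $\norm{b}_2$ is small there) makes $\norm{u_i(\tau)}_\infty$ as small as desired by choosing $\epsilon$ small. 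Starting from this, I define $T^*=\sup\{T\ge\tau:\ \norm{u_1(t)}_\infty+\norm{u_2(t)}_\infty+\norm{u_3(t)}_\infty\le\sigma,\ t\in[\tau,T]\}$ with $\sigma\le\lambda/4$. On $[\tau,T^*)$ the coefficient $\norm{b}_\infty\le\sigma$ makes the two differential inequalities a dissipative $2\times2$ system $Y'\le-\lambda Y+\frac{\sigma^2}{\lambda}X$, $X'\le-(c_1-C\sigma)X+c_2Y$ for $X=\norm{u_1}_2^2+\norm{u_3}_2^2$, $Y=\norm{u_2}_2^2$, whose coefficient matrix is negative definite once $\sigma$ is small enough; this gives $X(t)+Y(t)\le Ce^{-\alpha_3 t}$, and feeding this $L^2$ decay back through smoothing shows the $L^\infty$ norms stay strictly below $\sigma$, so $T^*=\infty$.

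The main obstacle, visible throughout, is reconciling the $L^2$-smallness hypothesis with the $L^\infty$-control needed to tame the quadratic reaction: the nonlinearity cannot be absorbed using only the uniform bound $\mathcal K$, and its coefficients become harmless only after $\norm{b}_\infty$ is shown to be small. The two delicate points are (i) the smoothing step upgrading $L^2$-smallness at $t=0$ to $L^\infty$-smallness at $t=\tau$ with time-uniform constants, and (ii) verifying that $\sigma$ can be taken small enough for the coupled ODE system to be dissipative, i.e.\ that the strict rate $\lambda=M_1-2M_2>0$ dominates the reaction coupling — precisely the place where $M_1>2M_2$ is indispensable.
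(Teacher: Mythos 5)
Your proposal is correct in outline, but it takes a genuinely different route from the paper. The paper works with a single weighted Lyapunov functional $\mathcal{L}_1(t)=\norm{u_1}_2^2+\beta\norm{u_2}_2^2+\norm{u_3}_2^2$ (with $\beta$ large), obtains the same linear spectral gap you identify (from $\lambda=M_1-2M_2>0$, the conservation laws \eqref{s2} forcing $\bar u_1=\bar u_2=-\bar u_3$, and Poincar\'e--Wirtinger), and then absorbs the cubic nonlinearity $\mathcal{I}_2=\int_\Omega u_2(u_3-u_1)(u_1+u_2-u_3)\,dx$ entirely at the $L^2$ level via the interpolation step \eqref{s5}, which is claimed to yield $\mathcal{I}_2\le C\,\mathcal{L}_1^{9/8}$; the superlinear power then lets Gronwall close the estimate for small $\mathcal{L}_1(0)$ with no $L^\infty$ smallness of $b$ ever being invoked. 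Your route instead insists that the quadratic reaction cannot be tamed using only the uniform bound $\mathcal{K}$, and goes through (i) parabolic smoothing to convert $L^2$-smallness at $t=0$ into $L^\infty$-smallness at a fixed time $\tau$, (ii) the maximum principle giving $\norm{b(t)}_\infty\le e^{-\lambda(t-s)/2}\norm{b(s)}_\infty$ once $\norm{u_1}_\infty,\norm{u_3}_\infty\le\lambda/4$, and (iii) a continuity/bootstrap argument closing a dissipative $2\times2$ differential inequality. This is longer but arguably more robust: it does not rely on the delicate interpolation exponents of \eqref{s5} (note that the standard interpolation gives only $\norm{u}_3^3\le\norm{u}_2^{3/2}\norm{u}_6^{3/2}$, i.e.\ a \emph{sublinear} power of $\mathcal{L}_1$, so the absorption mechanism there deserves scrutiny, whereas your mechanism is insensitive to it), and it makes transparent where $M_1>2M_2$ strictly enters. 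Two details you should pin down to make your argument airtight: in the smoothing step, the $L^2\to L^\infty$ bound $\norm{S(t)f}_\infty\le Ct^{-n/4}\norm{f}_2$ is only time-integrable for $n\le 3$, so for general $n$ you should instead interpolate the source between its small $L^2$ norm and its $O(\mathcal{K}^2)$ $L^\infty$ bound and use $L^p\to L^\infty$ smoothing with $p>n/2$ (exactly as in the proof of Proposition \ref{pro:qualitative_stability}); and in closing the bootstrap you need $\norm{u_2(\tau)}_\infty$ to be small \emph{relative to} the threshold $\sigma$, which your construction delivers since $\epsilon$ may be chosen after $\sigma$ is fixed.
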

\begin{proof}
Let us consider the following Lyapunov function 
\[
\mathcal{L}_1(t)=\norm{u_1(t)}_2^2 +  \beta \norm{u_2(t)}_2^2+\norm{u_3(t)}_2^2,
\]
where the constant $\beta$ is large enough and chosen later.
Along the trajectory of \eqref{s1}, we have
\begin{equation}\label{s6}
    \begin{aligned}
        \dfrac{1}{2}\dfrac{d\mathcal{L}_1}{dt}
        &=-\left(d_1\norm{\nabla u_1}_2^2 + d_2\beta\norm{\nabla u_2}_2^2+d_3\norm{\nabla u_3}_2^2   
        + \lambda \int_\Omega (u_1u_2+\beta u_2^2 -u_2u_3)dx \right)\\
     &  + \int_\Omega  u_2(u_3-u_1)(u_1+u_2-u_3)    dx  
       :=-\mathcal{I}_1 +\mathcal{I}_2.
    \end{aligned}
\end{equation}
To treat the first term $\mathcal{I}_1,$   regarding  to the Poincar\'e-Wirtinger inequality with   $\widetilde d_i =\dfrac{d_i}{ P(\Omega)}, \ 1\leq i\leq 3,$  one has
 \begin{align*}
 d_1\norm{\nabla u_1}_2^2 + d_2\beta\norm{\nabla u_2}_2^2+d_3\norm{\nabla u_3}_2^2 & \geq \widetilde d_1(\norm{  u_1}_2^2 - \bar u_1^2)+\widetilde d_2\beta(\norm{  u_2}_2^2 - \bar u_2^2)+\widetilde d_3(\norm{  u_3}_2^2 - \bar u_3^2)\\
 &\geq \widetilde d_1\norm{  u_1}_2^2 +\widetilde d_2\beta\norm{  u_2}_2^2+\widetilde d_3\norm{  u_3}_2^2  - (\widetilde d_1 +\beta \widetilde d_2 +\widetilde d_3 )\bar u_2^2 \\
 & \geq \widetilde d_1\norm{  u_1}_2^2 +\widetilde d_2\beta\norm{  u_2}_2^2+\widetilde d_3\norm{  u_3}_2^2  - (\widetilde d_1 +\beta \widetilde d_2 +\widetilde d_3 )\int_\Omega {u}_2^2 dx,
 \end{align*}
 where we applied the conservation laws \eqref{s2} in the second estimate and the Cauchy–Schwartz inequality in the last estimate. Consequently, for any $\mu \in(0,1),$ we obtain
 \begin{equation}\label{s3}
     \begin{aligned}
         \mathcal{I}_1& \geq \widetilde d_1\norm{  u_1}_2^2 +\widetilde d_2\beta\norm{  u_2}_2^2+\widetilde d_3\norm{  u_3}_2^2 +\int_\Omega \left( \lambda u_1u_2+(\lambda\beta  -\widetilde d_1 -\beta \widetilde d_2 -\widetilde d_3 )u_2^2 - \lambda u_2u_3 \right)dx \\
         &\geq(1-\mu) \left(  \widetilde d_1\norm{  u_1}_2^2 +\widetilde d_2\beta\norm{  u_2}_2^2+\widetilde d_3\norm{  u_3}_2^2 \right) 
+ \int_\Omega \left ( \mu \widetilde d_1 u_1^2  + \lambda u_1u_2+\dfrac{\lambda\beta}{4}  \widetilde d_3 u_2^2   \right) dx \\
&+ \int_\Omega \left (  \mu\widetilde d_3   u_3^2   - \lambda u_2u_3  + \dfrac{\lambda\beta}{4}  \widetilde d_3 u_2^2 \right) dx +\int_\Omega   \left( \beta(\mu\widetilde d_2 -\widetilde d_2 +\lambda /2 )  -\widetilde d_1   -\widetilde d_3 \right)    u_2^2 dx.
     \end{aligned}
 \end{equation}
Since $(\mu\widetilde d_2 -\widetilde d_2 +\lambda /2) \nearrow \lambda/2$ as $\mu \nearrow 1,$ we can choose $\mu \in (0,1)$ such that $\mu\widetilde d_2 -\widetilde d_2 +\lambda /2 >0.$
Then, choosing $\beta $ sufficiently large, for instance,
\[
\beta \geq \max \left\{\dfrac{\lambda}{\mu \widetilde d_1}, \dfrac{\lambda}{\mu \widetilde d_3}, \dfrac{\widetilde d_1   + \widetilde d_3}{\mu\widetilde d_2 -\widetilde d_2 +\lambda /2} \right\},
\]
 it reality follows from \eqref{s3} that
\begin{equation}\label{s4}
    \mathcal{I}_1 \geq (1-\mu) \left(  \widetilde d_1\norm{  u_1}_2^2 +\widetilde d_2\beta\norm{  u_2}_2^2+\widetilde d_3\norm{  u_3}_2^2 \right) .
\end{equation}
 This estimate confirms that the linear part of \eqref{s1}, after a suitable scaling, has a spectral gap. For controlling the nonlinear part of \eqref{s1}, we use the uniform boundedness of the solution. Indeed, applying the Cauchy–Schwarz inequality and the interpolation inequality, one has
		\begin{equation} \label{s5}
\begin{aligned}
    \mathcal{I}_2&\leq  C \sum_{i=1}^3 \norm{u_i}_3^3  
    \leq  C \sum_{i=1}^3 \norm{u_i}^{9/4 }_{2}\norm{u_i}_{6}^{3/4}\\
    &\leq  C   \sum_{i=1}^3  \norm{u_i}^{9/4 }_{2}  \leq  C ( \sum_{i=1}^3  \norm{u_i}^{2 }_{2})  (\sum_{i=1}^3 \norm{u_i}^{1/4} _{2})\\
    &\leq C( \sum_{i=1}^3  \norm{u_i}^{2 }_{2})  (\sum_{i=1}^3 \norm{u_i}^2 _{2})^{1/8},
\end{aligned}
		\end{equation}
  where we used Jensen's inequality in the last estimate. Combining \eqref{s6}, \eqref{s4} and \eqref{s5}, one gets
\begin{align*}
			\dfrac{ d\mathcal{L}_1(t)}{dt}& \leq -2  \alpha_3  \mathcal{L}_1(t) +  C  \mathcal{L}_1(t) ( \mathcal{L}_1(t)) )^{1/8 }  \\
			& \leq -\alpha_3  \mathcal{L}_1(t)  +  \mathcal{L}_1(t) \left( C ( \mathcal{L}_1(t) )^{1/8 } -\alpha_3 \right),
		\end{align*}
  where $\alpha_3 = (1-\mu)\min \{ \widetilde d_1, \beta \widetilde d_2, \widetilde d_3 \}.$
  Therefore, when $\mathcal{L}_1 (0)$ small enough, $\mathcal{L}_1(\cdot)$ is decreasing and thus $$\mathcal{L}_1(t) \leq C e^{- \alpha_3 t} , \text{ for all } t \geq 0,$$ by Gronwall inequality. The proof is completed.
\end{proof}

\subsection{Instability of boundary equilibrium - Bootstrap instability}\label{subsection:bootstrap}
In the case $M_2\leq M_1 <2M_2$, we will prove that the positive equilibrium is exponentially stable, while the boundary equilibrium is will be shown to be unstable. The first part is contained in the following theorem.
\begin{theorem}\label{thm0}
		Let $M_2\leq M_1<2M_2 $ and   the assumptions \eqref{A1}-\eqref{A2} hold. Then, the positive equilibrium is locally exponentially stable, that means,  there is a sufficiently small constant  $\epsilon>0$ such that   if 
  \[
  \norm{a_0-a^*_\infty}^2_2+\norm{b_0-b^*_\infty}^2_2+\norm{c_0-c^*_\infty}^2_2 \leq \epsilon,
  \]
  then 
  \begin{equation*}
      \norm{a(t)-a^*_\infty}^2_2+\norm{b(t)-b^*_\infty}^2_2+\norm{c(t)-c^*_\infty}^2_2 \leq C e^{-\alpha_2 t} \text{ for all }t\geq 0,
  \end{equation*}
  for some positive constants $C, \alpha_2$ which are independent of $t$.
	\end{theorem}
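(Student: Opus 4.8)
The plan is to follow the Lyapunov-function-plus-spectral-gap strategy of Theorem \ref{thm5}, but now linearizing around the positive equilibrium instead of the boundary one. First I introduce the shifted variables $v=(v_1,v_2,v_3)=(a-a_\infty^*,b-b_\infty^*,c-c_\infty^*)$. Since $a_\infty^*=c_\infty^*=M_1/2$, the reaction term reads $b(c-a)=(v_2+b_\infty^*)(v_3-v_1)$, so that $v$ solves a system whose linear reaction part is $b_\infty^*(v_3-v_1)$ in the first two equations and $-b_\infty^*(v_3-v_1)$ in the third, while the nonlinear part is $v_2(v_3-v_1)$. Exactly as in \eqref{cl1}, the conservation laws become $\int_\Omega(v_1+v_3)\,dx=\int_\Omega(v_2+v_3)\,dx=0$, which crucially force $\bar v_1=\bar v_2=-\bar v_3=:m$.

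I then use the Lyapunov functional
\[
\mathcal{L}_2(t)=\norm{v_1(t)}_2^2+\beta\norm{v_2(t)}_2^2+\norm{v_3(t)}_2^2,
\]
with $\beta>0$ to be fixed later; in contrast to Theorem \ref{thm5}, here $\beta$ will be chosen \emph{small}. Differentiating along the trajectory and using the elementary identity $(v_1+\beta v_2-v_3)(v_3-v_1)=-(v_1-v_3)^2+\beta v_2(v_3-v_1)$ to split the reaction into its linear and cubic parts, I obtain
\[
\tfrac12\tfrac{d}{dt}\mathcal{L}_2=-\mathcal{I}_1+\mathcal{I}_2,
\]
where
\[
\mathcal{I}_1=d_1\norm{\nabla v_1}_2^2+\beta d_2\norm{\nabla v_2}_2^2+d_3\norm{\nabla v_3}_2^2+b_\infty^*\norm{v_1-v_3}_2^2-\beta b_\infty^*\int_\Omega v_2(v_3-v_1)\,dx
\]
is the linear dissipation and $\mathcal{I}_2=\int_\Omega(v_1+\beta v_2-v_3)\,v_2(v_3-v_1)\,dx$ is the cubic remainder.

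The heart of the matter, and the main obstacle, is the spectral-gap estimate $\mathcal{I}_1\ge \alpha_2\mathcal{L}_2$. The difficulty is that the Poincar\'e--Wirtinger inequality controls only the oscillating parts $\tilde v_i:=v_i-\bar v_i$, so diffusion provides \emph{no} damping on the spatially homogeneous modes; their decay must come entirely from the reaction. I therefore split $v_i=\bar v_i+\tilde v_i$. On the mean modes, using $\bar v_1-\bar v_3=2m$ and $\bar v_2=m$, the reaction contribution to $\mathcal{I}_1$ equals $(4+2\beta)b_\infty^* m^2\ge0$; this is precisely the manifestation of the reaction-matrix eigenvalue $-2b_\infty^*$ on the subspace cut out by the conservation laws, and it is positive \emph{unconditionally}. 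On the oscillating modes, Poincar\'e--Wirtinger gives the coercive terms $\widetilde d_i\norm{\tilde v_i}_2^2$ with $\widetilde d_i=d_i/P(\Omega)$, and the single indefinite term $-\beta b_\infty^*\int_\Omega \tilde v_2(\tilde v_3-\tilde v_1)\,dx$ is absorbed by Young's inequality: choosing the Young parameter large and then $\beta$ small renders the oscillating quadratic form coercive as well. Combining the two pieces with $\mathcal{L}_2=(2+\beta)m^2+\norm{\tilde v_1}_2^2+\beta\norm{\tilde v_2}_2^2+\norm{\tilde v_3}_2^2$ then yields $\mathcal{I}_1\ge\alpha_2\mathcal{L}_2$ for an explicit $\alpha_2>0$ depending on $b_\infty^*$, $d_1,d_2,d_3$ and $\Omega$.

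Finally, the cubic term is controlled exactly as in \eqref{s5}: by the Cauchy--Schwarz and interpolation inequalities together with the uniform-in-time $L^\infty$-bound \eqref{uit-boundedness} from Theorem \ref{thm:global-existence}, one gets $\mathcal{I}_2\le C\,\mathcal{L}_2\,\mathcal{L}_2^{1/8}$. Hence
\[
\tfrac{d}{dt}\mathcal{L}_2\le -2\alpha_2\mathcal{L}_2+C\mathcal{L}_2^{1+1/8}=\mathcal{L}_2\bra{-2\alpha_2+C\mathcal{L}_2^{1/8}},
\]
and if $\mathcal{L}_2(0)\le\epsilon$ is small enough, with $\epsilon$ depending on $\alpha_2$ and hence on $M_1,M_2,\mathcal K,\Omega,d_1,d_2,d_3$, a continuity (bootstrap) argument keeps $C\mathcal{L}_2^{1/8}\le\alpha_2$ for all time, so that $\mathcal{L}_2$ is nonincreasing and Gronwall's inequality gives $\mathcal{L}_2(t)\le\mathcal{L}_2(0)e^{-\alpha_2 t}$. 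This is the asserted local exponential stability, with $\alpha_2$ as the decay rate.
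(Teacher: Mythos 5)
Your route is genuinely different from the paper's. The paper proves Theorem \ref{thm0} in two stages: first it shows $\|w_1(t)\|_2^2+\|w_3(t)\|_2^2$ is nonincreasing (using $b=w_2+b_\infty^*\ge 0$) and controls $\|w_2(t)\|_2^2$ by a Gronwall argument, so that the whole perturbation stays of size $C\epsilon$ in $L^2$ for all time; it then applies Chebyshev's inequality to deduce that $b(\cdot,t)\ge\delta$ on a measurable set of measure at least $1-\theta$ uniformly in $t$, and concludes via the entropy--entropy dissipation inequality of Lemma \ref{lem:degenerate_reaction} exactly as in Theorem \ref{thm2}. You instead run the linearization/spectral-gap scheme of Theorem \ref{thm5} around the positive equilibrium. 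Your spectral-gap computation is correct and rather clean: the conservation laws force $\bar v_1=\bar v_2=-\bar v_3=m$, the mean--oscillation cross terms vanish, the reaction contributes $(4+2\beta)b_\infty^*m^2$ on the means, and the single indefinite oscillatory term is absorbed by Young's inequality for $\beta$ small. This is a legitimate and arguably more transparent way to see the coercivity that the paper extracts from Lemma \ref{lem:degenerate_reaction}.

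The gap is in the final step, $\mathcal{I}_2\le C\,\mathcal{L}_2^{1+1/8}$, which you import from \eqref{s5}. That chain rests on the inequality $\|u\|_3^3\le\|u\|_2^{9/4}\|u\|_6^{3/4}$, which is not a valid interpolation (the correct exponents between $L^2$ and $L^6$ are $\|u\|_3^3\le\|u\|_2^{3/2}\|u\|_6^{3/2}$; testing $u=\mathbf{1}_E$ with $|E|=\varepsilon$ small shows the stated version fails). With only the uniform-in-time bound $\|v\|_\infty\le C(\mathcal K)$ from Theorem \ref{thm:global-existence} --- which is \emph{not} small --- the sharp conclusion from $L^2$-smallness is $\|v_i\|_3^3\le\|v_i\|_\infty\|v_i\|_2^2\le C(\mathcal K)\,\mathcal{L}_2$, i.e.\ the cubic term is only \emph{linear} in $\mathcal{L}_2$ with a constant of order $\mathcal K$, and there is no reason for $C(\mathcal K)$ to be below your spectral gap $\alpha_2\sim\min\{2b_\infty^*,\widetilde d_i/2\}$. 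Hence the differential inequality $\frac{d}{dt}\mathcal{L}_2\le\mathcal{L}_2(-2\alpha_2+C\mathcal{L}_2^{1/8})$ is not justified and the continuity argument does not close. This is precisely the obstruction the paper's proof of Theorem \ref{thm0} is designed to avoid (it never needs a superlinear bound on the cubic remainder). Your argument can be repaired in dimensions $n\le 3$ by estimating $\|v_i\|_3^3$ via Gagliardo--Nirenberg in terms of $\|\nabla v_i\|_2^{\theta}\|v_i\|_2^{3-\theta}$ and absorbing the gradient factor into the diffusion by Young's inequality, which does produce a genuinely superlinear power of $\mathcal{L}_2$; but as written the step fails, and without such a fix the proof is incomplete.
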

\begin{proof}
Following the ideas from Theorem \ref{thm2}, we will show that $b$ is bounded from below in a measurable set with positive measure. We first prove that
  \begin{equation}\label{ep5}
      \norm{a(t)-a^*_\infty}^2_2+\norm{b(t)-b^*_\infty}^2_2+\norm{c(t)-c^*_\infty}^2_2 \leq C\epsilon  ,
  \end{equation}
holds for all $t\geq 0,$ where $C$ is a positive constant.  

  Indeed, multiplying the first equation and the third equation in \eqref{ep2} by $w_1$ and $w_3$ respectively, then   integrating over $\Omega,$ we arrive at
\[
\dfrac{d}{dt}   \left ( \norm{w_1}_2^2 + \norm{w_3}_2^2 \right) +d_1 \norm{\nabla w_1}_2^2+d_3\norm{\nabla w_3}_2^2 = \int_\Omega -(w_2+b^*_\infty)(w_3-w_1)^2dx\leq 0.
\]
Therefore,
\[
\dfrac{d}{dt}   \left ( \norm{w_1}_2^2 + \norm{w_3}_2^2 \right) \leq 0,
\]
 i.e., $ \norm{w_1(t)}_2^2 + \norm{w_3(t)}_2^2 $ is decreasing with respect to $t,$ and hence, for all $t\geq 0,$
 \begin{equation}\label{ep3}
     \norm{w_1(t)}_2^2 + \norm{w_3(t)}_2^2 \leq \norm{w_1(0)}_2^2 + \norm{w_3(0)}_2^2 \leq \norm{a_0-a^*_\infty}^2_2 +\norm{c_0-c^*_\infty}^2_2  \leq \epsilon.
 \end{equation}

 To control  $\norm{w_2(t)}_2 ,$ we do similar in the proof of Theorem \ref{thm2} the remark that, in this case, we have  \eqref{ep3} instead of \eqref{t2}. Therefore, we get an estimate similar to \eqref{t0}
\begin{equation}\label{t11'}
  \dfrac{d}{dt}\norm{w_2}_2^2 \leq  -\kappa_3 \norm{w_2}_2^2   +C   \epsilon ,
\end{equation}
where $\kappa_3 $ and $C$ are positive constant. The classical Gronwall lemma gives
\begin{equation}\label{t12'}
\begin{aligned}
\norm{w_2(t)}^2_2 \leq \norm{w_2(0)}^2_2 e^{-\kappa_3 t}+\dfrac{C \epsilon}{\kappa_3} \le  \max\left (1+ \dfrac{C  }{\kappa_3}  \right)\epsilon.
\end{aligned}
\end{equation}
 Combining \eqref{ep3} and  \eqref{t12'}, we deduce \eqref{ep5}.
 
\medskip
Fix $t>0$. We claim that for $\epsilon$ sufficiently small, there are positive constants $\delta>0$ and $\theta\in (0,1)$, and a positive measurable set $\omega \subset\Omega$  with $|\omega| \geq 1-\theta$ such that $b(x,t)\geq \delta$ for all $x\in \omega$. It should be emphasized that {\it $\delta$ and $\theta$ are independent of $t$}. Indeed, for $\epsilon$ small enough such that $\sqrt{C\epsilon}< (b^*_\infty)^2$, we can choose $\kappa>0$ and $\theta \in (0,1)$ such that \[
 {\sqrt{C\epsilon}} < \theta\kappa ^2 <\kappa ^2 <(b^*_\infty)^2.
\]
Then, applying  Chebyshev's inequality   into account and using \eqref{ep5}, we have
\[
\left| \{  x\in\Omega: |b(x,t)-b^*_\infty| \geq \kappa  \} \right| \leq \dfrac{1}{\kappa^2}\norm{b( t)-b^*_\infty}_2\leq \dfrac{\sqrt{C\epsilon}}{\kappa^2},
\]
which leads to
\[
\left| \{  x\in\Omega: b^*_\infty -\kappa \leq b(x,t)\leq b^*_\infty +\kappa  \} \right| \geq 1 - \dfrac{\sqrt{C\epsilon}}{\kappa^2} \ge 1-\theta.
\]
Choose $\delta =b^*_\infty -\kappa >0$ and $\omega =\{ x\in \Omega: b(x,t)\geq  \delta,\forall t\geq t_0 \},$ then \[ |\omega|\geq 
 \left| \{  x\in\Omega: b^*_\infty -\kappa \leq b(x,t)\leq b^*_\infty +\kappa  \} \right| \geq 1 
 - \theta.\]
Therefore, the claim is proved. Finally, the remainder of the proof proceeds in the same manner as the proof of Theorem \ref{thm2}, so we omit it here.
\end{proof}

\medskip
We now turn to the instability of the boundary equilibrium $( \bar a_\infty,0,\bar c_\infty)=(M_1-M_2,0,M_2)$. In order to do that, we employ the bootstrap instability scheme of Guo and Strauss \cite{Gou3}, \cite{Gou1},\cite{Gou2}.  

 \medskip
We recall as in Section \ref{subsection:qualitative}, the new variables as perturbation around the boundary equilibria $u=(a-\bar  a_\infty,b,c-\bar c_\infty),$ then
	  \eqref{p1} can be rewritten as following
	\begin{equation}\label{p2}
		\begin{cases}
			\partial_t u_1 -d_1\Delta u_1= (\bar c_\infty-\bar a_\infty)u_2+u_2(u_3-u_1), &\text { in } \Omega \times\mathbb R_+,\\
			\partial_t u_2 -d_2\Delta u_2= (\bar c_\infty-\bar a_\infty)u_2+u_2(u_3-u_1), &\text { in } \Omega \times\mathbb R_+,\\
			\partial_t u_3-d_3\Delta u_3=-(\bar c_\infty-\bar a_\infty)u_2-u_2(u_3-u_1),  &\text { in } \Omega \times\mathbb R_+,\\
			\nabla u_i \cdot \nu =0, \ 1\leq i\leq 3, &\text { on } \partial\Omega \times\mathbb R_+,\\
			u(x,0)=u_{0}(x)=(a_0-\bar a_\infty,b_0,c_0-\bar c_\infty), &\text { in } \Omega .
	\end{cases}\end{equation}
	System \eqref{p2} can be expressed as 
 \begin{equation}\label{p3}
      u_t=L_1u+N_1(u), 
 \end{equation}
	where $$L_1=  \begin{pmatrix} 
		d_1\Delta & \bar c_\infty-\bar a_\infty & 0 \\
		0 & d_2\Delta +\bar c_\infty-\bar a_\infty & 0\\
		0 & -(\bar c_\infty-\bar a_\infty) & d_3\Delta   \\
	\end{pmatrix} \quad \text{ and } \quad N_1(u)=\begin{pmatrix} 
		-u_1u_2+u_2u_3  \\
		-u_1u_2+u_2u_3\\
		u_1u_2-u_2u_3\\
	\end{pmatrix}.$$
Taking derivative with respect to time, we get from \eqref{p3} the equation
 \begin{equation} \label{p4} u_{tt}=Lu_t+\partial_t\left (N_1(u)\right ).
 \end{equation}
 Denote $y=(u,u_t)^T,$ then $y$ satisfies the following system 
 \begin{equation}\label{p5}
     y_t=Ly+N(y),
 \end{equation}
 where  $L=  \begin{pmatrix} 
		L_1 & 0 \\
		0 & L_1\\
	\end{pmatrix}$
	and $N(y)=\begin{pmatrix} 
		N_1(u)  \\
		\partial_t\left (N_1(u)\right )\\
	\end{pmatrix}.$

	Consider the following norms  \[
\norm{y}=\norm{u}_2+\norm{u_t}_2;\  \vertiii{y}   = \norm {u}_{H^2(\Omega)}+\norm{u_t}_{2} .
    \] 
    We now can state the main theorem in this section.
	\begin{theorem}\label{thm4} Let $\Omega \subset \mathbb R^n,$ for $n\in\{1,2,3\}$, and assume that  \eqref{A1}-\eqref{A2} hold. Assume additionally \textcolor{red}{$M_1 \ge 2M_2$}. \blue{Then there exist positive constants $\theta_0, \tau_0$ depending only on $M_1, M_2, d_1, d_2, d_3, \Omega, \mathcal{K}$ and $ \int_{\Omega}b_0(x)dx$ such that if $y(0) = \delta y_0$ with $\|y_0\| = 1$ and $\delta$ sufficiently small, then
    \begin{equation*}
        \|y(t)- \delta e^{Lt}y_0\| \le C\delta^2e^{2(\bar c_\infty - \bar a_\infty)t},
    \end{equation*}
    for all
    \begin{equation*}
        0\leq t\leq T^\delta:=\dfrac{1}{(\bar c_\infty - \bar a_\infty)}\ln \dfrac{\theta_0}{\delta},
    \end{equation*}
    and, at the escape time $T^\delta$, it holds
    \begin{equation*}
        \|y(T^\delta)\| \ge \tau_0 > 0.
    \end{equation*}
    }
	\end{theorem}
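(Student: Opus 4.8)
The plan is to follow the classical Guo--Strauss bootstrap instability scheme adapted to the semilinear structure of \eqref{p5}. The key point is that when $M_1 \ge 2M_2$ we have $\bar c_\infty - \bar a_\infty = M_2 - (M_1-M_2) = 2M_2 - M_1 \le 0$; however the instability is driven by the block structure of $L_1$, whose $(2,2)$-entry $d_2\Delta + (\bar c_\infty - \bar a_\infty)$ generates a semigroup with \emph{positive} largest eigenvalue coming from the zero Neumann mode, giving growth rate $\Lambda := \bar c_\infty - \bar a_\infty$ in the relevant sign convention of the statement (one reads off the dominant eigenvalue of $L$ on the constant-in-space subspace). The first step is therefore to diagonalize/analyze $L_1$ under homogeneous Neumann boundary conditions: expanding in the Neumann eigenbasis $\{\phi_k\}$ of $-\Delta$ with eigenvalues $0 = \mu_0 < \mu_1 \le \cdots$, on each mode $L_1$ becomes a $3\times 3$ matrix whose spectrum is explicitly computable, and one identifies the unstable eigenvalue $\Lambda$ (attained on the $\mu_0 = 0$ mode) together with a spectral bound $\|e^{Lt}\| \le C e^{\Lambda t}$ on the whole space and a sharper decomposition $e^{Lt} = e^{\Lambda t}\Pi + R(t)$ where $\Pi$ is the finite-rank projection onto the unstable eigenspace and $\|R(t)y_0\|$ grows strictly slower.

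Second, I would set up the Duhamel representation for \eqref{p5},
\begin{equation*}
  y(t) = \delta e^{Lt}y_0 + \int_0^t e^{L(t-s)}N(y(s))\,ds,
\end{equation*}
and estimate the nonlinear remainder $z(t) := y(t) - \delta e^{Lt}y_0$. The nonlinearity $N(y)$ is quadratic in $u$ (and $N$'s second block $\partial_t N_1(u)$ is bilinear in $u, u_t$), so on the time interval $[0,T^\delta]$ one expects $\|N(y(s))\| \lesssim \vertiii{y(s)}^2$. The crucial mechanism --- and the heart of the Guo--Strauss method --- is the bootstrap estimate controlling the \emph{stronger} norm $\vertiii{\cdot}$ by the \emph{weaker} norm $\|\cdot\|$ on the instability time scale: using parabolic smoothing of the semigroup generated by $L$ (the estimate $\|e^{Lt}f\|_{H^2} \le C(1 + t^{-1})\|f\|_2$ type bounds, valid since $n \le 3$ so that $H^2 \hookrightarrow L^\infty$ and the quadratic nonlinearity maps $H^2$ into $L^2$ boundedly), together with the uniform-in-time $L^\infty$ bound $\mathcal K$ from Theorem \ref{thm:global-existence}, one shows that as long as $\|y(t)\|$ stays of order $\delta e^{\Lambda t} \le \theta_0$, the $H^2$-norm remains comparably controlled.

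Third, I would run the continuity/bootstrap argument: define $T^\delta$ as the escape time in the statement and assume for contradiction that $\|y(t)\| \le \theta_0$ throughout $[0, T^\delta]$. Feeding the bootstrap $H^2$-bound into the Duhamel estimate for $z(t)$ yields
\begin{equation*}
  \|z(t)\| \le C\int_0^t e^{\Lambda(t-s)}\vertiii{y(s)}^2\,ds \le C\delta^2 e^{2\Lambda t},
\end{equation*}
which is exactly the claimed bound (here the quadratic-in-$\delta$ gain beats the linear term precisely because the linear term grows like $\delta e^{\Lambda t}$ and we run only up to the escape time). Finally, to obtain the lower bound $\|y(T^\delta)\| \ge \tau_0$, I would use that the projection $\Pi$ onto the unstable mode is nontrivial for generic $y_0$ --- but since $y_0$ is arbitrary with $\|y_0\| = 1$, the cleanest route is the standard trick of choosing the \emph{dominant eigenvector} as the initial direction (or invoking that, after the bootstrap, $\|\delta e^{Lt}y_0\|$ reaches order $\theta_0$ at $t = T^\delta$ while $\|z(T^\delta)\| = O(\delta^2 e^{2\Lambda T^\delta}) = O(\theta_0^2)$ is negligible for small $\theta_0$), so that $\|y(T^\delta)\| \ge \|\delta e^{L T^\delta}y_0\| - \|z(T^\delta)\| \ge c\theta_0 - C\theta_0^2 \ge \tau_0$.

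\emph{Main obstacle.} The delicate step is the bootstrap inequality $\vertiii{y(t)} \le C\|y(t)\|$ (up to lower-order corrections) valid uniformly on $[0, T^\delta]$: one must control the $H^2$-norm of $u$ --- which the weaker norm does not see --- using only parabolic regularization and the quadratic structure, and this is where the dimension restriction $n \le 3$ and the uniform $L^\infty$ bound $\mathcal K$ are essential. Making this estimate \emph{closed} (the $H^2$-bound must not feed back a term that destroys the smallness) is the technical crux; it typically requires differentiating \eqref{p3} in time (which is why the augmented variable $y = (u, u_t)$ was introduced) so that $u_t$ is controlled in $L^2$ while $u$ gains two spatial derivatives, and then interpolating to bound $\vertiii{y}^2$ by $\|y\|\cdot\vertiii{y}$ absorbed into the left side.
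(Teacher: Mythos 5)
Your overall architecture is the right one and matches the paper's: a Guo--Strauss scheme built on (i) a lemma controlling the strong norm $\vertiii{y}$ by the time-integral of the weak norm $\|y\|$ (the paper's Lemma \ref{lm4}, obtained by energy estimates for $u$ and $u_t$ together with elliptic regularity applied to $L_1u=u_t-N_1(u)$, rather than by parabolic smoothing of $e^{Lt}$ --- though you do identify the time-differentiation mechanism correctly in your final paragraph), (ii) the quadratic estimate $\|N(y)\|\le C_N\vertiii{y}^2$ via $H^2\hookrightarrow L^\infty$, (iii) exponential upper and lower bounds for $e^{Lt}$, and (iv) the three-time continuity argument with the triangle inequality at the escape time.

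The genuine gap is in your identification of the growth rate. You take the hypothesis $M_1\ge 2M_2$ at face value, correctly compute $\bar c_\infty-\bar a_\infty=2M_2-M_1\le 0$, and then assert that the $(2,2)$-entry $d_2\Delta+(\bar c_\infty-\bar a_\infty)$ nevertheless ``generates a semigroup with positive largest eigenvalue \dots giving growth rate $\Lambda:=\bar c_\infty-\bar a_\infty$.'' This is self-contradictory: on the zero Neumann mode that entry contributes exactly the eigenvalue $\bar c_\infty-\bar a_\infty$, which is nonpositive under the stated hypothesis, and on the higher modes the eigenvalues of $L_1$ are $d_1\lambda_i$, $d_3\lambda_i$, $d_2\lambda_i+\bar c_\infty-\bar a_\infty$, all nonpositive as well. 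There is no unstable mode when $M_1\ge 2M_2$ --- indeed the paper proves local exponential \emph{stability} of the boundary equilibrium in that regime (Theorem \ref{thm5}), so the instability statement cannot hold there. The red condition in the statement is a typo; the intended regime is $M_2\le M_1<2M_2$ (see the section heading and the remark following the theorem, which stresses that $\bar c_\infty-\bar a_\infty=2M_2-M_1>0$ ``plays a crucial role''), and every exponential in the theorem and in Lemmas \ref{lm7} and \ref{lm6} relies on this strict positivity. Your argument as written collapses at step one because $e^{\Lambda t}$ does not grow; the correct move was to flag and resolve the sign, not to absorb it into a ``sign convention.'' A secondary point: the lower bound $\|e^{Lt}y_0\|\ge C_Pe^{(\bar c_\infty-\bar a_\infty)t}$ requires the nondegeneracy $\int_\Omega y_{0,2}\,dx\ne 0$ (Lemma \ref{lm7}); your fallback of ``choosing the dominant eigenvector'' proves escape only for one special direction, whereas the paper keeps $y_0$ general subject to that integral condition, which is why the constants are allowed to depend on $\int_\Omega b_0\,dx$.
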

 \blue{For the convenience of the reader, we briefly outline the proof of   Theorem \ref{thm4}  here. Firstly, based on the construction of the higher order norm and the lower order norm for the solution (and the fact that the higher order norm of the solution can be controlled at small times by the low norm), the detailed energy (in high norm) estimates for solutions to the nonlinear problem are provided in Lemma \ref{lm4} and Lemma \ref{lm5}. Then, some   estimates of the solution in lower order norm to the  linearized system are successfully  established in Lemma \ref{lm7} and Lemma \ref{lm6}. Finally, we demonstrate that the  nonlinear solution is sufficiently close to the linear solution, and for these reasons, the existence of escape time is completely confirmed in Theorem \ref{thm4}. In this case, $\bar c_\infty -   \bar a_\infty = 2M_2 - M_1 >0,$ this fact plays a crucial role in our approach.  We also note here that in all the following lemmas, the assumptions in Theorem \ref{thm4} hold.}
	\begin{lemma}\label{lm4}
		For $\vertiii{y}<\sigma = \dfrac{1}{C_{SI}}$ with $C_{SI}$ is the Sobolev embedding constant in 
$H^2(\Omega)\hookrightarrow  L^\infty(\Omega),$ i.e., $\norm{u}_\infty \leq C_{SI}\norm{u}_{H^2(\Omega)},$ for all $u\in H^2(\Omega).$ Then,  there exists $C_\sigma>0$ \blue{depending only on $M_1, M_2, \Omega, \mathcal{K}$} such that the following estimate holds
		 \begin{equation*}
		     \vertiii{y}^2\leq C_\sigma \left( \int_0^t \norm{y}^2ds+\norm{y(0)}^2 \right) \quad \forall t\ge 0.
		 \end{equation*}
	\end{lemma}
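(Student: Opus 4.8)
The goal is a higher-norm energy estimate controlling $\vertiii{y}^2 = \norm{u}_{H^2(\Omega)}^2 + \norm{u_t}_2^2$ (up to the equivalence of $\vertiii{\cdot}$ with its square) by the time-integral of the lower norm $\norm{y}^2 = (\norm{u}_2 + \norm{u_t}_2)^2$ plus the initial data. The natural strategy is a standard parabolic energy method applied to the second-order-in-time formulation \eqref{p4}--\eqref{p5}, exploiting the smallness assumption $\vertiii{y} < \sigma = 1/C_{SI}$ to absorb nonlinear terms. First I would test the equation \eqref{p3} for $u$ with $-\Delta u$ in $L^2(\Omega)$ to produce $\tfrac{d}{dt}\norm{\nabla u}_2^2$ and a dissipation term $\norm{\Delta u}_2^2$; the linear coupling terms ($(\bar c_\infty - \bar a_\infty)u_2$) are handled by Cauchy--Schwarz and Young's inequality, picking up lower-order $\norm{u}_2^2$ contributions, while the quadratic nonlinearity $N_1(u)$ is estimated via $\norm{N_1(u)}_2 \le C\norm{u}_\infty \norm{u}_2 \le C C_{SI}\vertiii{y}\norm{u}_2 \le C\norm{u}_2$, using precisely the Sobolev bound $\norm{u}_\infty \le C_{SI}\norm{u}_{H^2}$ together with $\vertiii{y} < \sigma$. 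Testing with $-\Delta u$ and using elliptic regularity (the homogeneous Neumann Laplacian gives $\norm{u}_{H^2(\Omega)} \le C(\norm{\Delta u}_2 + \norm{u}_2)$ under assumption \eqref{A1}) upgrades the $\Delta u$-control to full $H^2$-control.

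Next I would test the time-differentiated equation \eqref{p4} (equivalently the $u_t$-component of \eqref{p5}) with $u_t$ in $L^2(\Omega)$. This yields $\tfrac{d}{dt}\norm{u_t}_2^2$ together with a good dissipation $\norm{\nabla u_t}_2^2$. The delicate term is $\partial_t(N_1(u))$, which is quadratic of the form $u_t \cdot u$ schematically; I would estimate $\int_\Omega \partial_t(N_1(u))\, u_t\, dx \le C\norm{u}_\infty\norm{u_t}_2^2 \le C C_{SI}\vertiii{y}\norm{u_t}_2^2 \le C\norm{u_t}_2^2$, once more invoking the smallness of $\vertiii{y}$ to keep this term at the order of the lower norm. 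The linear coupling $(\bar c_\infty - \bar a_\infty)\partial_t u_2$ in \eqref{p4} is again absorbed by Young's inequality into the dissipation plus a $\norm{u_t}_2^2$ remainder.

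Combining the two tested identities, I would form a suitable linear combination (weighting the $\norm{\nabla u}_2^2$-inequality so that its dissipation dominates) to obtain a differential inequality of the shape
\begin{equation*}
    \frac{d}{dt}\bra{\norm{\nabla u}_2^2 + \norm{u_t}_2^2} + c_0\bra{\norm{u}_{H^2(\Omega)}^2 + \norm{\nabla u_t}_2^2} \le C\bra{\norm{u}_2^2 + \norm{u_t}_2^2} = C\norm{y}^2,
\end{equation*}
for positive constants $c_0, C$ depending only on the stated quantities. Integrating on $(0,t)$, dropping the nonnegative dissipation integral on the left, and adding the trivial bound $\norm{u(t)}_2^2 \le \norm{u(0)}_2^2 + \int_0^t \frac{d}{ds}\norm{u}_2^2\,ds$ (controlled in the same way) then gives
\begin{equation*}
    \vertiii{y}^2 \le C_\sigma\bra{\int_0^t \norm{y}^2\,ds + \norm{y(0)}^2},
\end{equation*}
which is the claim. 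The main obstacle is the careful bookkeeping of the quadratic terms $N_1(u)$ and especially $\partial_t(N_1(u))$: one must verify that every nonlinear contribution can genuinely be reduced to a constant times the \emph{lower} norm $\norm{y}^2$, and this is exactly where the structural smallness $\vertiii{y} < \sigma = 1/C_{SI}$ is indispensable, since without it the nonlinear terms would carry an uncontrolled high-norm factor and the inequality would close onto $\vertiii{y}^2$ on the right-hand side rather than onto $\int_0^t\norm{y}^2$. A secondary technical point is ensuring the restriction $n \in \{1,2,3\}$ is used consistently so that the Sobolev embedding $H^2(\Omega)\hookrightarrow L^\infty(\Omega)$ is valid, guaranteeing $C_{SI}$ is finite.
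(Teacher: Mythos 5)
There is a genuine gap in how you obtain the pointwise-in-time $H^2$ control, and it matters for the statement as claimed. Testing \eqref{p3} with $-\Delta u$ produces $\|\Delta u\|_2^2$ only as a \emph{dissipation} term; after integrating on $(0,t)$ you control $\int_0^t\|\Delta u(s)\|_2^2\,ds$, not $\|\Delta u(t)\|_2^2$ at time $t$. Since $\vertiii{y(t)}$ contains $\|u(t)\|_{H^2(\Omega)}$ evaluated at time $t$, your scheme (which, as you say, drops the dissipation integral after integration) leaves you with pointwise control of $\|\nabla u(t)\|_2^2+\|u_t(t)\|_2^2$ only, which is strictly weaker than what the lemma asserts. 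A second, related defect: integrating the $-\Delta u$ identity puts $\|\nabla u(0)\|_2^2$ on the right-hand side, and this is \emph{not} dominated by $\|y(0)\|^2=(\|u(0)\|_2+\|u_t(0)\|_2)^2$. In the application (Theorem \ref{thm4}) the initial data is only assumed small in this $L^2$-based norm, so an $H^1$ norm of $u_0$ on the right would break the bootstrap.

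The paper closes both gaps with a different and simpler device that your plan is missing: it never tests with $-\Delta u$. It tests \eqref{p2} with $u$ and \eqref{p4} with $u_t$ to get the Gronwall-type inequalities $\frac{d}{dt}\|u\|_2^2\le C\|u\|_2^2$ and $\frac{d}{dt}\|u_t\|_2^2\le C\|u_t\|_2^2$ (whose integration needs only $L^2$ data), and then, at each \emph{fixed} time $t$, reads the parabolic system as the elliptic system $L_1u=u_t-N_1(u)$ to deduce
\begin{equation*}
  \|u(t)\|_{H^2(\Omega)}^2\le C\left(\|u(t)\|_2^2+\|\Delta u(t)\|_2^2\right)\le C\left(\|u(t)\|_2^2+\|u_t(t)\|_2^2+\|N_1(u(t))\|_2^2\right)\le C\left(\|u(t)\|_2^2+\|u_t(t)\|_2^2\right),
\end{equation*}
the last step using $\|u\|_\infty\le 1$ from the smallness hypothesis. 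This gives the instantaneous $H^2$ bound in terms of the lower norm at the same time $t$, which is then fed into the integrated Gronwall inequalities. You should replace your $-\Delta u$ energy estimate by this elliptic-regularity step (or, equivalently, estimate $\|\Delta u_i(t)\|_2$ directly from the equation); the rest of your bookkeeping of the quadratic terms via $\|N_1(u)\|_2\le C\|u\|_\infty\|u\|_2$ and the use of $\vertiii{y}<\sigma$ is in line with the paper.
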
	
	\begin{proof}
 By the assumptions, we first see that $\norm{u_i}_\infty\leq   C_{SI}\norm{u_i}_{H^2(\Omega)}\leq \sigma C_{SI}\leq 1, $ for all $1\leq i\leq 3.
 $
Next, multiplying both sides of \eqref{p2}  by $u_1,u_2,u_3$ respectively,   integrating with respect to $x $ over $\Omega,$ then    applying Cauchy–Schwarz inequality inequality and the boundedness of $u_i,$ yields,
\begin{align*}
    &\dfrac{1}{2}\dfrac{d}{dt} \left( \norm{u_1}^2_2+\norm{u_2}^2_2+\norm{u_3}^2_2  \right)+\left( d_1\norm{\nabla u_1}^2_2+d_2\norm{\nabla u_2}^2_2+d_3\norm{\nabla u_3}^2_2  \right) \\
    &=\int_\Omega(\bar c_\infty -\bar a_\infty) ( u_1u_2+u_2^2-u_2u_3)dx 
    +\int_\Omega u_2(u_3-u_1)(u_1+u_2-u_3)dx
    \\
    &\leq (\bar c_\infty -\bar a_\infty)\int_\Omega ( \frac{1}{2} u_1^2 +2 u_2^2+ \frac{1}{2}   u_3^2)dx  + 3 \int_\Omega (u_1^2+u_2^2+u_3^2 )dx  \\
    & \leq (3+2(\bar c_\infty -\bar a_\infty)) \norm u_2^2,
\end{align*}
which implies that
\begin{equation}\label{eq19}
    \dfrac{d}{dt}   \norm{u }^2_2    \leq (6+4(\bar c_\infty -\bar a_\infty)) \norm{u }^2_2 .
\end{equation}
Similarly, by Holder's inequality and the fact that  $\norm {u_i}_\infty\leq 1,$ we obtain from \eqref{p4} that
\begin{align*}
    &\dfrac{1}{2}\dfrac{d}{dt} \left( \norm{\partial_t u_1}^2_2+\norm{\partial_t u_2}^2_2+\norm{\partial_t u_3}^2_2  \right)+\left( d_1\norm{\nabla \partial_t u_1}^2_2+d_2\norm{\nabla \partial_t u_2}^2_2+d_3\norm{\nabla \partial_t u_3}^2_2  \right) \\
    &=\int_\Omega(\bar c_\infty -\bar a_\infty) ( \partial_t u_1 \partial_t u_2+(\partial_t u_2)^2-\partial_t u_2\partial_t u_3)dx 
    \\
    &+\int_\Omega \left[u_2(\partial_t u_3-\partial_t u_1)+ \partial_t u_2 (u_3-u_1) \right](\partial_t u_1+\partial_t u_2-\partial_t u_3)dx
    \\
    &\leq  \left (6+ 2 (\bar c_\infty - \bar a_\infty) \right )  \norm{\partial_t u}_2^2.
    \end{align*}
Thus,
\begin{equation}\label{eq20}
    \dfrac{d}{dt} \norm{\partial_t u}^2_2 \leq (12+ 4 (\bar c_\infty - \bar a_\infty)) \norm{\partial_t u}^2_2.
\end{equation}
 For any $t>0,$ we rewrite system \eqref{p3} as an elliptic system $$ L_1u=u_t-N_1(u),$$
 i.e.,
 \begin{equation}\label{g11}
         \begin{cases}
			d_1\Delta u_1 +  (\bar c_\infty-\bar a_\infty)u_2= \partial_t u_1 - u_2(u_3-u_1), &\text { in } \Omega \times\mathbb R_+,\\
			 d_2\Delta u_2+ (\bar c_\infty-\bar a_\infty)u_2=\partial_t u_2-u_2(u_3-u_1), &\text { in } \Omega \times\mathbb R_+,\\
			 d_3\Delta u_3 -(\bar c_\infty-\bar a_\infty)u_2=\partial_t u_3 -u_2(u_3-u_1),  &\text { in } \Omega \times\mathbb R_+.\\
     \end{cases}
 \end{equation}
Taking the $L^2(\Omega)$-norm of both sides of the first equation in \eqref{g11} gives
\begin{equation*}
    d_1\|\Delta u_1\|_2^2 \le 3(\bar c_\infty - \bar a_\infty)^2\|u_2\|_2^2 + 3\|\partial_t u_1\|_2^2 + 3\|u_2(u_3- u_1)\|_2^2.
\end{equation*}
Repeating this for the second and the third equation in \eqref{g11}, there exists a constant $C$ such that  for all $1\leq i \leq 3,$
\begin{align}\label{eq17.2}
    \norm{u_i}_{H^2(\Omega)}^2 & \leq C\left(\norm{u_i}_2^2+ \norm{ \Delta u_i}_2^2 \right)\leq C \left ( \norm u_2^2+\norm{u_t}_2^2+\norm {N_1(u)}_2^2 \right )\\\notag
    &\leq C \left (  \norm u_2^2+\norm{u_t}_2^2 +\sum\limits_{1\leq i\neq j \leq 3} \norm{u_iu_j}_2^2 \right) \\ \notag
    &\leq   C \left (  \norm u_2^2+\norm{u_t}_2^2  \right),
\end{align}
  where the last inequality holds due to the uniform boundedness of $u_i$. Substituting  inequalities \eqref{eq19}  and \eqref{eq20} into \eqref{eq17.2}, one obtains
 \begin{align}\label{eq21}
     \norm{u_i}_{H^2(\Omega)}^2&\leq C \left( \int_0^t \dfrac{d}{ds}\norm {u(\cdot,s)}_2^2ds+\norm{u_0}^2_2+\int_0^t \dfrac{d}{ds}\norm{\partial_tu(\cdot,s)}_2^2 ds + \norm{u_t(0)}_2^2 \right) \notag\\
     &\leq C  \left(  \int_0^t ( \norm{u}_2^2+\norm{u_t}_2^2) ds + \norm{u_0}^2_2+\norm{u_t(0)}_2^2 \right) \notag\\
     &\leq C \left( \int_0^t \norm {y}^2ds+\norm{y(0)}^2 \right).
 \end{align}
 Now, from \eqref{eq20}, we can estimate the second part of $\vertiii{y}.$
\begin{equation}\label{eq22}
    \norm{u_t}_2^2 =\int_0^t \dfrac{d}{ds} \norm{u_t(\cdot,s) }_2^2ds+ \norm{u_t(0)}_2^2 \leq C \left( \int_0^t \norm{u_t}^2_2ds+\norm{u_t(0)}_2^2 \right) \leq C \left( \int_0^t \norm{y}^2ds+\norm{y(0)}^2 \right).
\end{equation}
 Combining \eqref{eq21} and \eqref{eq22}, one can estimate that $$
\vertiii{y}^2\leq C \left( \int_0^t \norm{y}^2ds+\norm{y(0)}^2 \right).$$  
This completes the proof of Lemma \ref{lm4}.
	\end{proof}
Next, we can demonstrate the following estimate for nonlinear part.
\begin{lemma}\label{lm5}
    There exists a constant $C_N$ \blue{depending only on $\Omega$} such that 
    \begin{equation}\label{g1}
        \norm{N(y)}= \norm{N_1(u)}_2+\norm{\partial_t (N_1(u))}_2\leq C_N   \vertiii{y}^2  .
    \end{equation}
\end{lemma}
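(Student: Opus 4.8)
The plan is to exploit the fact that every entry of both $N_1(u)$ and $\partial_t\bra{N_1(u)}$ is a bilinear expression in the components of $u$ (respectively, in the components of $u$ and $u_t$), so the whole estimate reduces to controlling single products of the form $\norm{u_iu_j}_2$ and $\norm{u_i\,\partial_t u_j}_2$. For each such product I would apply H\"older's inequality in the $L^\infty$--$L^2$ pairing, putting one factor in $L^\infty(\Omega)$ and the other in $L^2(\Omega)$, and then convert the $L^\infty$ norm into an $H^2$ norm via the Sobolev embedding $H^2(\Omega)\hookrightarrow L^\infty(\Omega)$, which is valid precisely because $n\in\{1,2,3\}$. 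This is the place where the constant $C_N$ enters and, since it is just a multiple of the embedding constant $C_{SI}$ for the fixed dimension $n\le 3$, it depends only on $\Omega$ as claimed.

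Concretely, for the undifferentiated part I would note that each component of $N_1(u)$ is a sum of two terms $\pm u_iu_j$ with $i\neq j$, and estimate
\[
\norm{u_iu_j}_2 \le \norm{u_i}_\infty\norm{u_j}_2 \le C_{SI}\norm{u_i}_{H^2(\Omega)}\norm{u_j}_2.
\]
Since both $\norm{u_i}_{H^2(\Omega)}\le \vertiii{y}$ and $\norm{u_j}_2\le\vertiii{y}$, each product is bounded by $C_{SI}\vertiii{y}^2$, and summing over the finitely many terms gives $\norm{N_1(u)}_2\le C\vertiii{y}^2$.

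For the time-differentiated part I would use the product rule, $\partial_t(u_iu_j)=\partial_t u_i\cdot u_j + u_i\cdot\partial_t u_j$, and in each resulting term keep the \emph{undifferentiated} factor in $L^\infty$ and the time-differentiated factor in $L^2$, for instance
\[
\norm{u_i\,\partial_t u_j}_2 \le \norm{u_i}_\infty\norm{\partial_t u_j}_2 \le C_{SI}\norm{u_i}_{H^2(\Omega)}\norm{\partial_t u_j}_2 \le C_{SI}\vertiii{y}^2,
\]
using $\norm{\partial_t u_j}_2\le\norm{u_t}_2\le\vertiii{y}$. Summing the finitely many such contributions yields $\norm{\partial_t(N_1(u))}_2\le C\vertiii{y}^2$, and adding the two bounds gives the claim. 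The only subtlety worth flagging---it is the crux of why $\vertiii{\cdot}$ is defined with an $H^2$ component rather than an $L^2$ one---is the bookkeeping of \emph{which} factor is placed in $L^\infty$: one must always keep a time derivative in $L^2$ (where it is controlled by $\norm{u_t}_2$, which carries no spatial regularity) and spend the $H^2$ regularity only on the factors without a time derivative. Beyond this choice the estimate is a direct application of H\"older's inequality and the Sobolev embedding, so I do not anticipate any serious obstacle.
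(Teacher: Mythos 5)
Your proof is correct and takes essentially the same route as the paper's: H\"older's inequality in the $L^\infty$--$L^2$ pairing combined with the Sobolev embedding $H^2(\Omega)\hookrightarrow L^\infty(\Omega)$, always placing the time-differentiated factor in $L^2$ so that only the undifferentiated factors consume the $H^2$ regularity. The paper merely shortens the bookkeeping by noting that every component of $N_1(u)$ equals $\pm u_2(u_3-u_1)$.
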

\begin{proof}
Applying  Sobolev embedding inequality $\norm{u_i}_\infty\leq C_{SI} \norm{u_i}_{H^2(\Omega)}$ and Cauchy–Schwarz inequality inequality, we estimate the first term in the left hand side of \eqref{g1}
\begin{equation}\label{g9}
    \begin{aligned}
        \norm{N_1(u)}_2&=   3\norm{u_2(u_3-u_1)}_2\\
	&\leq 3\norm{u_2}_\infty\norm{u}_2\\
        &\leq 3C_{SI}  \norm{u}_{H^2(\Omega)} \norm{u}_2 \leq \dfrac{3}{4}C_{SI} \vertiii{y}^2.
    \end{aligned}
\end{equation}
 Likewise, the second term in    the left hand side of \eqref{g1} can be controlled by
 \begin{equation}\label{g10}
    \begin{aligned}
        \norm{\partial_t (N_1(u))}_2=3 \norm{\partial_t(u_2(u_3-u_1))}_2  
	 \leq 12 \norm{ u}_\infty \norm{\partial_tu}_2 
        \leq 3C_{SI}  \vertiii{y}^2.
    \end{aligned}
\end{equation}
Combining \eqref{g9} and \eqref{g10}, we see that \eqref{g1} is verified.
\end{proof}
	\begin{lemma}\label{lm7}
		For any $\norm{y_0}=1$ with $C_1:= \int_\Omega {y}_{0,2}(x) dx \ne 0,$ there exists a positive constant $C_{P}$ \blue{depending only on $M_1, M_2, C_1$} such that \[
  \norm {e^{Lt}y_0}  \geq C_P e^{(\bar c_\infty-\bar a_\infty)t}, \] for all $t>0.$
	\end{lemma}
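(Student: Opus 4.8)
The plan is to exploit the block structure of $L$ together with the fact that the \emph{second} scalar component of the linearized flow decouples from the others and grows purely exponentially at rate $\bar c_\infty - \bar a_\infty$, which we can detect through its spatial average.

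First I would set $\mu := \bar c_\infty - \bar a_\infty > 0$ and reduce to the first block. Since $L = \begin{pmatrix} L_1 & 0 \\ 0 & L_1\end{pmatrix}$ is block diagonal, we have $e^{Lt} = \begin{pmatrix} e^{L_1 t} & 0 \\ 0 & e^{L_1 t}\end{pmatrix}$, so writing $y_0 = (u_0, v_0)$ gives $e^{Lt}y_0 = (e^{L_1 t}u_0,\, e^{L_1 t}v_0)$ and hence $\norm{e^{Lt}y_0} = \norm{e^{L_1 t}u_0}_2 + \norm{e^{L_1 t}v_0}_2 \ge \norm{e^{L_1 t}u_0}_2$ by definition of the norm. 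It therefore suffices to bound $\norm{e^{L_1 t}u_0}_2$ from below, and in fact only through its second component.

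Next I would isolate that component. Reading off the middle row of $L_1$, the function $\phi(t) := (e^{L_1 t}u_0)_2$ solves the \emph{decoupled} scalar problem $\partial_t \phi = d_2\Delta\phi + \mu\phi$ with homogeneous Neumann data and $\phi(0) = y_{0,2}$; crucially its evolution involves neither $u_1$ nor $u_3$. Integrating this equation over $\Omega$ and using the Neumann boundary condition to annihilate the Laplacian term yields $\frac{d}{dt}\bar\phi = \mu\bar\phi$, so that $\bar\phi(t) = C_1 e^{\mu t}$ with $C_1 = \int_\Omega y_{0,2}(x)\,dx \ne 0$. Since $|\Omega| = 1$, the Cauchy--Schwarz inequality gives $\norm{\phi(t)}_2 \ge \norm{\phi(t)}_1 \ge |\bar\phi(t)| = |C_1|\,e^{\mu t}$. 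Chaining the estimates, $\norm{e^{Lt}y_0} \ge \norm{e^{L_1 t}u_0}_2 \ge \norm{\phi(t)}_2 \ge |C_1|\,e^{(\bar c_\infty - \bar a_\infty)t}$, so the claim holds with $C_P = |C_1|$.

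The argument is structural rather than technical, so I do not expect a genuine obstacle; the only points requiring care are verifying that the decoupling is real (the special block form of $L_1$, where the $u_2$-equation is self-contained, is exactly what makes this work) and checking that the rate produced by the average matches the advertised $\bar c_\infty - \bar a_\infty$. The hypothesis $C_1 \ne 0$ is precisely what keeps the lower bound nondegenerate: it guarantees that $y_0$ has nonzero projection onto the spatially constant eigenfunction associated with the unstable eigenvalue $\mu = \bar c_\infty - \bar a_\infty$ of the second block, which is the mode driving the instability in the subsequent bootstrap scheme.
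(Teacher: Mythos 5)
Your proof is correct and follows essentially the same route as the paper: integrate the decoupled $\wh u_2$-equation over $\Omega$ to get $\overline{\wh u_2}(t)=C_1e^{(\bar c_\infty-\bar a_\infty)t}$, then lower-bound the $L^2$ norm by the spatial average via Cauchy--Schwarz. The only difference is that you discard the $\norm{u_t}_2$ part of the norm outright, whereas the paper also lower-bounds it and takes a minimum; both are valid since the norm is a sum of the two pieces.
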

	\begin{proof}
		Let us consider the linear system
		\begin{equation}\label{eq9p}
		\begin{cases}
			\partial_t \wh{u}_1 -d_1\Delta \wh u_1= (\bar c_\infty-\bar a_\infty)\wh u_2,  &\text { in } \Omega \times\mathbb R_+ ,\\
			\partial_t \wh u_2 -d_2\Delta \wh u_2= (\bar c_\infty-\bar a_\infty)\wh u_2,  &\text { in } \Omega \times\mathbb R_+ ,\\
			\partial_t \wh u_3-d_3\Delta \wh u_3=-(\bar c_\infty-\bar a_\infty)\wh u_2 ,  &\text { in } \Omega \times\mathbb R_+,\\
			\nabla \wh u_i \cdot \nu =0, 1\leq i\leq 3,  &\text { on } \partial\Omega \times\mathbb R_+,\\
			\wh u(x,0)=y_0,  &\text { in } \Omega .
	\end{cases}\end{equation}
		Taking the integration over $\Omega,$ one obtains form \eqref{eq9p} that
		\begin{equation}\label{l1p}
			\begin{cases}
				\frac{d}{dt} \int_\Omega \wh u_1(x,t) dx =(\bar c_\infty-\bar a_\infty)\int_\Omega \wh u_2(x,t) dx,\\
				\frac{d}{dt} \int_\Omega \wh u_2(x,t) dx=(\bar c_\infty-\bar a_\infty)\int_\Omega \wh u_2(x,t) dx\\
				\frac{d}{dt} \int_\Omega \wh u_3(x,t) dx=-(\bar c_\infty-\bar a_\infty)\int_\Omega \wh u_2(x,t) dx,\\
		\end{cases}\end{equation}  
		Therefore,  we can compute
		\begin{equation*}
			\begin{cases}
				\int_\Omega \wh u_1(x,t) dx =C_1 e^{(\bar c_\infty-\bar a_\infty)t}-C_1+C_2,\\
				\int_\Omega \wh u_2(x,t) dx=C_1e^{(\bar c_\infty-\bar a_\infty)t}  ,\\
				\int_\Omega \wh u_3(x,t) dx= -C_1 e^{(\bar c_\infty-\bar a_\infty)t}+C_1+C_3 ,\\
		\end{cases}\end{equation*} 
		where the constants $C_1= \int_\Omega y_{0,2}(x) dx,\ C_2= \int_\Omega  y_{0,1}(x) dx$ and $C_3=\int_\Omega  y_{0,3}(x) dx.$
		Thus,
		\begin{align*}
			\norm{e^{Lt}y_0}_2^2&= \int_\Omega \left( |\wh u_1(x,t)|^2+|\wh u_2(x,t)|^2+|\wh u_3(x,t)|^2\right) dx\notag \\ 
			& \geq   \left (\int_\Omega \wh u_1(x,t)dx \right)^2+\left (\int_\Omega \wh u_2(x,t)dx \right)^2+\left(\int_\Omega \wh u_3(x,t)dx \right)^2\notag \\
			& \geq \left(\int_\Omega \wh u_2(x,t)dx \right)^2  = C_1^2e^{2(\bar c_\infty-\bar a_\infty)t}  
		\end{align*}
		  Hence,
		\begin{equation}\label{l3p}
		    \norm{e^{Lt}y_0}_2\geq  { |C_1|}  e^{(\bar c_\infty-\bar a_\infty)t}.
		\end{equation}  
Now we turn to estimate  the second part $\wh u_{tt}=L \wh u_t.$ From \eqref{l1p}, we obtain
\begin{equation*} 
			\begin{cases}
				\frac{d}{dt} \int_\Omega \wh u_1(x,t) dx =C_1(\bar c_\infty-\bar a_\infty)e^{(\bar c_\infty-\bar a_\infty)t},  \\
				\frac{d}{dt} \int_\Omega \wh u_2(x,t) dx=C_1(\bar c_\infty-\bar a_\infty)e^{(\bar c_\infty-\bar a_\infty)t}, \\
				\frac{d}{dt} \int_\Omega \wh u_3(x,t) dx=-C_1(\bar c_\infty-\bar a_\infty)e^{(\bar c_\infty-\bar a_\infty)t}. 
		\end{cases}\end{equation*}  
  Then,
 \begin{align*}
      \norm{\partial_t \wh u}_2^2&=\int_\Omega\left ((\partial_t \wh u_1)^2+ (\partial_t \wh u_2)^2+(\partial_t \wh u_3)^2 \right) dx\\
      &\geq \dfrac{1}{3 } \left (\int_\Omega (\partial_t \wh u_1 +  \partial_t \wh u_2 + \partial_t \wh u_3 )  dx\right)^2\\
      &=\dfrac{|C_1|^2(\bar c_\infty-\bar a_\infty)^2e^{2(\bar c_\infty-\bar a_\infty)t}}{3 } .
 \end{align*}
By this,
\begin{equation}\label{l4p}
\norm{\partial_t(e^{Lt}y_0)}_2\geq \dfrac{ |C_1|(\bar c_\infty-\bar a_\infty)}{\sqrt{3 }} e^{(\bar c_\infty-\bar a_\infty)t}.
\end{equation}
It can be deduced from \eqref{l3p} and \eqref{l4p} that
  \[
  \norm {e^{Lt}y_0}  \geq C_P e^{(\bar c_\infty-\bar a_\infty)t},
  \]
  where the constant $C_P=\min\left\{{ |C_1|} ,\dfrac{ |C_1|(\bar c_\infty-\bar a_\infty)}{\sqrt{3 }}\right\}  $ depends on $C_1$ and $M_1, M_2$.
	\end{proof}
\begin{lemma}\label{lm6}
		Consider the linear partial equation \eqref{eq9p}. There exists a constant $C_L$ \blue{depending only on $\Omega$, $M_1, M_2$, $d_1$, $d_2$, $d_3$} such that $$\norm{ e^{Lt}}_{(L^2(\Omega),L^2(\Omega))} \leq C_Le^{(\bar c_\infty-\bar a_\infty) t}  .$$
	\end{lemma}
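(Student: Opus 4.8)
The plan is to reduce the semigroup estimate to a uniform bound on a family of $3\times 3$ matrix exponentials obtained by diagonalizing the Neumann Laplacian. Since $L=\mathrm{diag}(L_1,L_1)$ is block diagonal, one has $e^{Lt}=\mathrm{diag}(e^{L_1t},e^{L_1t})$, and the $L^2\to L^2$ operator norm of a block-diagonal operator on $(L^2(\Omega))^6$ is the maximum of the norms of its blocks; hence it suffices to prove $\norm{e^{L_1t}}_{(L^2,L^2)}\le C_Le^{\gamma t}$, where I abbreviate $\gamma:=\bar c_\infty-\bar a_\infty=2M_2-M_1>0$ (recall we are in the regime $M_1<2M_2$).

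First I would expand in the orthonormal basis $\{\phi_k\}_{k\ge 0}\subset L^2(\Omega)$ of eigenfunctions of $-\Delta$ with homogeneous Neumann conditions, with $0=\mu_0<\mu_1\le\mu_2\le\cdots$. Writing the solution of $\partial_t\hat u=L_1\hat u$ as $\hat u_i=\sum_k u_i^k(t)\phi_k$ decouples the equation across modes into the ODEs $\dot v^k=A_kv^k$ for $v^k=(u_1^k,u_2^k,u_3^k)^T$, where
\[
A_k=\begin{pmatrix} -d_1\mu_k & \gamma & 0\\ 0 & \gamma-d_2\mu_k & 0\\ 0 & -\gamma & -d_3\mu_k\end{pmatrix}.
\]
The eigenvalues of $A_k$ are the diagonal entries $-d_1\mu_k,\ \gamma-d_2\mu_k,\ -d_3\mu_k$, all with real part $\le\gamma$ and maximal at $k=0$, which is the spectral heuristic behind the claimed rate. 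By Parseval, $\norm{\hat u(t)}_2^2=\sum_k|v^k(t)|^2$, so a bound $|e^{A_kt}v|\le Ce^{\gamma t}|v|$ that is \emph{uniform in $k$} immediately yields $\norm{e^{L_1t}}_{(L^2,L^2)}\le Ce^{\gamma t}$ after summation.

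The triangular structure of $A_k$ makes this uniform estimate transparent. The middle component decouples and solves $u_2^k(t)=e^{(\gamma-d_2\mu_k)t}u_2^k(0)$, so $|u_2^k(t)|\le e^{\gamma t}|u_2^k(0)|$ because $\mu_k\ge 0$. The first component then satisfies $\dot u_1^k=-d_1\mu_k u_1^k+\gamma u_2^k$, which I solve by Duhamel and estimate as
\[
|u_1^k(t)|\le |u_1^k(0)|+\gamma|u_2^k(0)|\int_0^t e^{-d_1\mu_k(t-s)}e^{\gamma s}\,ds\le |u_1^k(0)|+e^{\gamma t}|u_2^k(0)|,
\]
where the convolution is controlled uniformly in $k$ via
\[
\int_0^t e^{-d_1\mu_k(t-s)}e^{\gamma s}\,ds=e^{-d_1\mu_k t}\,\frac{e^{(d_1\mu_k+\gamma)t}-1}{d_1\mu_k+\gamma}\le \frac{e^{\gamma t}}{\gamma}.
\]
The same argument applies verbatim to $u_3^k$. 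Assembling the three components gives $|v^k(t)|^2\le C^2e^{2\gamma t}|v^k(0)|^2$ with $C$ independent of $k$, and summing over $k$ together with the block structure finishes the proof, with $C_L$ depending only on $\gamma=2M_2-M_1$ (hence on $M_1,M_2$) and the diffusion coefficients.

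The only real (and minor) obstacle is this uniformity in $k$: the naive closed-form for $e^{A_kt}$ contains the factor $((d_1-d_2)\mu_k+\gamma)^{-1}$, which appears to blow up at the resonance $(d_2-d_1)\mu_k=\gamma$. The estimate above sidesteps the resonance entirely by bounding $|u_2^k(s)|\le e^{\gamma s}|u_2^k(0)|$ first — that is, discarding the favorable decay $-d_2\mu_k$ — so that only the harmless factor $(d_1\mu_k+\gamma)^{-1}\le\gamma^{-1}$ survives. This step uses $\gamma>0$ in an essential way, consistent with the regime $M_1<2M_2$ in which the instability analysis is carried out.
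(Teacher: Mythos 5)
Your proof is correct, and it follows the same basic route as the paper --- expansion in the Neumann eigenbasis, reduction to the family of $3\times 3$ mode matrices $A_k$, and identification of $\gamma=\bar c_\infty-\bar a_\infty$ as the maximal growth rate --- but you carry the argument strictly further than the paper does. The paper's proof stops after computing the eigenvalues $d_1\lambda_i$, $d_3\lambda_i$, $d_2\lambda_i+\gamma$ of each mode matrix and observing that they are all bounded above by $\gamma$; it does not address the fact that these matrices are non-normal, so the bound $\norm{e^{A_kt}}\le e^{\gamma t}$ does not follow from the spectrum alone, and the constant in $\norm{e^{A_kt}}\le C_ke^{\gamma t}$ could a priori degenerate in $k$ --- precisely at the near-resonance $(d_2-d_1)\mu_k\approx\gamma$ you identify, where the eigenvector matrix becomes ill-conditioned. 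Your Duhamel argument on the triangular structure (solve the decoupled $u_2^k$ equation first, discard its favorable decay, and bound the convolution by $e^{\gamma t}/(d_1\mu_k+\gamma)\le e^{\gamma t}/\gamma$ uniformly in $k$) supplies exactly the missing uniformity, and is also the one place where $\gamma>0$ is genuinely used. In short, your write-up is a complete proof of the lemma, whereas the paper's is essentially the spectral heuristic; a version of your convolution estimate is what would be needed to make the paper's argument airtight.
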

	\begin{proof}
	   Denote by $0=\lambda_0>\lambda_1\geq \lambda_2\geq \dots\geq \lambda_n\geq\dots \to -\infty$ the eigenvalues of the Neumann Laplacian,
     \begin{equation*}
         \begin{cases}
             \Delta v=\lambda v, \quad &x\in\Omega,\\
             \nabla v  \cdot \nu =0, & x\in\partial \Omega.
         \end{cases}
         \end{equation*}
        Then, for  an index $i$ fixed, eigenvalues $\eta$ of the linear operator $L$ defined by
\begin{equation*}
       \begin{vmatrix}
          d_1\lambda_i-\eta  & \bar c_\infty-\bar a_\infty & 0 \\
           0 & d_2\lambda_i +\bar c_\infty-\bar a_\infty - \eta& 0\\
           0 & \bar a_\infty-\bar c_\infty & d_3\lambda_i -\eta
       \end{vmatrix}
       =0.
\end{equation*}
That means, $\eta=d_1\lambda_i\leq 0,\ \eta=d_3\lambda_i\leq 0,\ \eta=d_2\lambda_i+\bar c_\infty-\bar a_\infty\leq \bar c_\infty-\bar a_\infty. $   
 Therefore,  the maximal eigenvalue of $L$ satisfies  $\eta_{\max} \leq \bar c_\infty-\bar a_\infty.$ That means, all eigenvalues of $L$ is bounded above by the positive number $ \bar c_\infty-\bar a_\infty$. Lemma \ref{lm6} is proved.
	\end{proof}
We  finish this section  by giving  the proof of Theorem \ref{thm4}.
	\begin{proof}[Proof of instability Theorem \ref{thm4}]
We choose
\begin{equation}\label{l2}
     \theta_0=\min\left\{\dfrac{\bar c_\infty - \bar a_\infty}{C_N\widetilde C^2 } ,\dfrac{C_P(\bar c_\infty - \bar a_\infty)}{2C_LC_N\widetilde C^2},\dfrac{\sigma}{\widetilde C } \right\},
\end{equation}
where $C_N$, $C_P$, $C_L$ are in Lemmas \ref{lm5}, \ref{lm7}, \ref{lm6}, respectively, and $\widetilde{C}$ is defined in \eqref{g6}. Let us denote 
\begin{align*}
    T^\delta &= \dfrac{1}{\bar c_\infty - \bar a_\infty}\ln \dfrac{\theta_0}{\delta},\\
    T^*&=\sup\limits_{t\geq 0}\{ \vertiii{y}<\sigma \},\\
    T^{**}&=\sup\limits_{t\geq 0} \left\{ \norm{y(t)}\leq 2C_L\delta e^{(\bar c_\infty - \bar a_\infty) t} \norm{y_0} \right\}.
\end{align*}
    For $t\leq \min\{T^\delta,T^*,T^{**} \},$ by the definition of $T^{**}$ and thanks to Lemma \ref{lm4}, we obtain
\begin{equation*}
   \begin{aligned}
     \vertiii{y(t)}^2&\leq C_\sigma \left( \int_0^t \norm{y(\cdot,s)}^2ds+\norm{y(0)}^2 \right)\\
     & \leq C_\sigma \left( \int_0^t  4C_L^2\delta^2  e^{2(\bar c_\infty - \bar a_\infty) s}ds+\delta^2 \right)\\
     &\leq C_\sigma\left(\dfrac{2}{\bar c_\infty - \bar a_\infty}C_L^2\delta^2e^{2(\bar c_\infty - \bar a_\infty) t}+\delta^2e^{2(\bar c_\infty - \bar a_\infty) t}\right).
\end{aligned} 
\end{equation*}
Hence, for all $t\le \min\{T^\delta, T^*, T^{**}\}$,
\begin{equation}\label{g6}
    \vertiii{y(t)}\leq \widetilde C \delta e^{(\bar c_\infty - \bar a_\infty) t},\text{ for } \widetilde C=\sqrt{ \left(\dfrac{2}{\bar c_\infty - \bar a_\infty}C_L^2+1\right)C_\sigma}.
\end{equation}
    Next, applying the Duhamel principle to the equations $y_t=Ly+N(y),$ we have
\begin{equation}\label{g7}
   \begin{aligned}
        \norm{y(t)-\delta e^{Lt}y_0}&=\norm{ \int_0^t e^{L(t-s)}N(y(\cdot,s))ds}\\
        &\leq \int_0^t \norm{  e^{L(t-s)}} \norm{ N(y(\cdot,s))}ds\\
        &\leq C_LC_N\int_0^t e^{(\bar c_\infty - \bar a_\infty)(t-s)} \vertiii{y(s)}^2ds\\
        &\leq \dfrac{ C_LC_N\widetilde C^2 \delta^2 }{\bar c_\infty - \bar a_\infty}e^{2(\bar c_\infty - \bar a_\infty) t},
   \end{aligned} 
\end{equation}
   where we used Lemma \ref{lm5} in the third inequality and  \eqref{g6} in the last inequality.

    \medskip
   Now we claim that $T^\delta= \min\{T^\delta,T^*,T^{**} \}$.  Indeed, suppose, ad absurdum, $T^{**}<T^\delta$  is the smallest, then $\delta e^{(\bar c_\infty - \bar a_\infty) T^{**}} <\delta e^{(\bar c_\infty - \bar a_\infty) T^\delta}=\theta_0.$  
   Thus, from  \eqref{g7}, we see that
   \begin{equation*}
     \begin{aligned}
\norm{y(T^{**})}&\leq \norm{y(T^{**})-\delta e^{LT^{**}}y_0}+\norm{\delta e^{LT^{**}}y_0}\\
&\leq \dfrac {C_LC_N\widetilde C^2 \delta^2 }{\bar a_\infty}  e^{2(\bar c_\infty - \bar a_\infty) T^{**}}+C_L\delta  e^{(\bar c_\infty - \bar a_\infty) T^{**}} \\
&<  C_L \delta  e^{(\bar c_\infty - \bar a_\infty) T^{**}}\left ( \dfrac{ C_N\widetilde C^2 \theta_0}{\bar c_\infty -\bar a_\infty} + 1\right)\\
&\leq 2  C_L \delta  e^{(\bar c_\infty - \bar a_\infty) T^{**}},
     \end{aligned}  
   \end{equation*}
   since $\theta_0\leq \dfrac{\bar c_\infty - \bar a_\infty}{C_N\widetilde C^2 },$ which contrasts with the definition of $T^{**}$. If $T^{*}<T^\delta$  is the smallest, we deduce from \eqref{g6} that 
  \begin{equation*}
\vertiii{y(T^*)} \leq \widetilde C \delta e^{(\bar c_\infty - \bar a_\infty) T^*} < \widetilde C \delta e^{(\bar c_\infty - \bar a_\infty) T^\delta}=\widetilde C\theta_0\leq \sigma,
   \end{equation*}
  due to  $\theta_0\leq \dfrac{\sigma}{\widetilde C },$ which contradicts the definition of $T^{*}.$
   
   Finally, we show that $T^\delta$ is the escape time. Indeed,
   \begin{equation*}
     \begin{aligned}
         \norm{y(T^\delta)}&\geq \norm{\delta e^{LT^\delta}y_0 } -  \norm{y(T^\delta)-\delta e^{LT^\delta}y_0}\\
         &\geq C_P\delta e^{(\bar c_\infty - \bar a_\infty) T^\delta}-\dfrac{C_LC_N\widetilde C^2\delta^2}{\bar c_\infty - \bar a_\infty}e^{2(\bar c_\infty - \bar a_\infty) T^\delta}\\
         &=C_P\theta_0 -\dfrac{C_LC_N\widetilde C^2}{\bar c_\infty - \bar a_\infty}\theta_0^2 \\
         &\geq \dfrac{1}{2}C_P\theta_0=\tau_0,
     \end{aligned}  
   \end{equation*}
   as $\theta_0 \leq \dfrac{C_P(\bar c_\infty -\bar a_\infty)}{2C_LC_N\widetilde C^2},$ where the constant $\tau_0$ does not depend on $\delta.$
\end{proof}
\section{Symmetric irreversible reactions}\label{sec:symmetric}

Similarly to Theorem \ref{thm:global-existence}, we have the following result for system \eqref{eq1'}.
\begin{theorem}\label{thm:global-existence-1}
    Assume \eqref{A1}--\eqref{A2}. Then there exists a unique global classical solution in the following sense: for any $T>0$,
    \begin{equation*}
        a,b,c\in C([0,T),L^p(\Omega) )\cap C^{1,2}((0,T)\times \overline{\Omega}), \quad \forall 1\le p < +\infty,
    \end{equation*}
  satisfying each equation in \eqref{eq1'} pointwise. Moreover, this solution is bounded uniformly in time, i.e. there is a positive constant $\mathcal{H}$ such that
  \begin{equation}\label{uit-boundedness_1}
 \norm{a(t)}_{ L^\infty(\Omega)} , \ \norm{b(t)}_{ L^\infty(\Omega)},\ \norm{c(t)}_{ L^\infty(\Omega)}\leq \mathcal H,  \text{ for all } t\ge 0.
 \end{equation}
\end{theorem}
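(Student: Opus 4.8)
The plan is to mirror the short argument used for Theorem~\ref{thm:global-existence}: I would verify that the reaction terms of \eqref{eq1'} satisfy exactly the three structural hypotheses --- quasi-positivity, (at most) quadratic growth, and a mass-control condition --- under which the global existence, uniqueness, and uniform-in-time boundedness result of \cite{FMT21} applies verbatim. Write the nonlinearities as $f_a = -ab-ac+2bc$, $f_b = -ab-bc+2ac$, and $f_c = -ac-bc+2ab$.

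First I would check quasi-positivity, which guarantees that nonnegative initial data produce nonnegative solutions: evaluating each nonlinearity on the corresponding face of the positive octant gives $f_a|_{a=0} = 2bc \ge 0$, $f_b|_{b=0} = 2ac \ge 0$, and $f_c|_{c=0} = 2ab \ge 0$ whenever $a,b,c \ge 0$. Second, each $f_z$ is a quadratic polynomial in $(a,b,c)$, so the system has the required polynomial (indeed quadratic) growth. Third, I would verify the mass structure by summing the three reaction terms, which cancel identically: $f_a+f_b+f_c = 0$. Integrating the summed equation over $\Omega$ and using the homogeneous Neumann boundary condition recovers the conservation law $\int_\Omega (a+b+c)\,dx = M$ already recorded in the Introduction; in particular $f_a+f_b+f_c \le 0$, which is precisely the mass-dissipation (here conservation) condition required.

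With these three properties in hand, the framework of \cite{FMT21} directly yields a unique global classical solution in the stated regularity class together with the uniform-in-time $L^\infty$ bound \eqref{uit-boundedness_1}, so that no further argument is needed beyond the verification above. The main obstacle is conceptually the uniform-in-time boundedness, since quadratic reaction--diffusion systems may blow up in finite time in general; however, this difficulty is entirely absorbed into the cited result, whose hypotheses hold here because the balanced quadratic structure of \eqref{eq1'} furnishes exactly the conserved total mass that drives the $L^p$-energy and duality estimates underlying \cite{FMT21}. From our side the only work is therefore the elementary check of the sign and balance conditions, entirely parallel to the case of \eqref{p1}.
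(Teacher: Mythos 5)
Your proposal is correct and follows exactly the route the paper takes: the paper proves Theorem \ref{thm:global-existence-1} by the same one-line appeal to \cite{FMT21} used for Theorem \ref{thm:global-existence}, after noting that the nonlinearities of \eqref{eq1'} are quadratic, quasi-positive, and satisfy a mass-control condition. Your explicit verification of quasi-positivity on each face of the positive octant and of the cancellation $f_a+f_b+f_c=0$ is accurate and simply spells out what the paper leaves implicit.
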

\begin{remark}\label{remark:Boltzmann_entropy}
 In some senses, in \eqref{P2}, if we add two sides of the first reaction with $\mathcal{B}$, the second reaction with $\mathcal{C}$ and the third reaction with $\mathcal{A},$ then we obtain a reversible reaction system as follows.
\begin{center}
    \begin{tikzpicture}[node distance=1.5cm]
    \node (A) at (2,2) {$\mathcal A + 2\mathcal{B}$};
    \node (B) at (0.8,0) {$\mathcal B + 2\mathcal{C}$};
    \node (C) at (3.2,0) {$\mathcal C + 2\mathcal{A}$};
    \draw[->] (A) -- (B);
    \draw[->] (B) -- (C);
    \draw[->] (C) -- (A);
\end{tikzpicture}
\end{center}
We can define the relative Boltzmann entropy corresponding to the positive equilibrium $(a_\infty, b_\infty, c_\infty)$
\[
\mathcal{E}(a,b,c| a^*_\infty,b^*_\infty,c^*_\infty)= \sum\limits_{z\in\{a,b,c\}}\int_\Omega\left(z\ln \dfrac{z}{z^*_\infty}-z+z^*_\infty\right )dx  ,  
\]
 and corresponding {\it entropy dissipation} along the flow of system \eqref{eq1'} is
\[
\begin{aligned}
\mathcal{D}(a,b,c)&=-\dfrac{d}{dt}\mathcal{E}(a,b,c| a^*_\infty,b^*_\infty,c^*_\infty)\\
&= d_1\int_{\Omega}\frac{|\nabla a|^2}{a}dx + d_2\int_{\Omega}\frac{|\nabla b|^2}{b}dx + d_3\int_{\Omega}\frac{|\nabla c|^2}{c}dx\\
&\; + \int_{\Omega}a(b-c)\ln\frac bc dx + \int_{\Omega}b(a-c)\ln \frac ac dx + \int_{\Omega}c(a-b)\ln \frac ab dx.
\end{aligned}
\]
However, due to the irreversibility of \eqref{P2} and the presence of the boundary equilibrium, we are not able prove an entropy-entropy dissipation inequality type
\[
\mathcal{D}(a,b,c) \geq H(t) \mathcal{E}(a,b,c| a^*_\infty,b^*_\infty,c^*_\infty),
\]
for some suitable $H(t)$, to achieve the convergence to the positive equilibrium point as in \cite{FT18}.
\end{remark}

Inheriting  techniques in previous sections, we now investigate the stability of  the positive equlibrium  $ 
(a^*_\infty,b^*_\infty,c^*_\infty)=(M/3,M/3,M/3) $ by the linearization method and the instability of the  boundary equilibria $ (\bar a_\infty,\bar b_\infty,\bar c_\infty) \in \{(0,0,M); (M,0,0); (0,M,0)\} $  by bootstrap instability.
	\subsection{Stability of positive equilibrium}
Without loss of generality, we assume that $M=3$, which gives the positive equilibrium $(a^*_\infty,b^*_\infty,c^*_\infty)=(1,1,1).$  We denote by $(c_1, c_2, c_3)=(a-1,b-1,c-1)$ and the system for $( c_1, c_2, c_3)$ can be written as
	\begin{equation}\label{eq2}
		\begin{cases}
			\partial_t c_1 -d_1\Delta c_1=-2c_1+ c_2 + c_3 -c_1 c_2 -c_1 c_3 +2 c_2  c_3 ,&\text { in } \Omega \times\mathbb R_+,\\
			\partial_t  c_2  -d_2\Delta  c_2 =c_1-2 c_2 + c_3 -c_1 c_2 +2c_1 c_3 - c_2  c_3 ,&\text { in } \Omega \times\mathbb R_+,\\
			\partial_t  c_3  -d_3\Delta  c_3 =c_1+ c_2 -2 c_3 +2c_1 c_2 -c_1 c_3 - c_2  c_3 ,&\text { in } \Omega \times\mathbb R_+,\\
			\nabla c_1 \cdot \nu= \nabla c_2 \cdot \nu=\nabla c_3 \cdot \nu =0,&\text { on } \partial\Omega \times\mathbb R_+,\\
			( c_1( \cdot,0), c_2(\cdot,0), c_3(\cdot,0)) =(a_0 -1,b_0 -1,c_0 -1),&\text { in } \Omega .
	\end{cases}\end{equation}
The mass conservation law for \eqref{eq1'} is 
\begin{equation*} 
    \int_\Omega (a(x,t)+b(x,t)+c(x,t) )dx = \int_\Omega (a_0(x)+b_0(x)+c_0(x) )dx =: M ,
\end{equation*}
which leads to \[
\int_\Omega (c_1(x,t)+c_2(x,t)+c_3(x,t) )dx=0.
\] 
To show the stability of  the positive equilibrium point for system \eqref{eq1'}, we use the similar arguments in the proof of Theorem \ref{thm5}:     showing that the linear part has a spectral gap; then nonlinear part, which only appears in quadratic form, is shown to be dominated by the linear one. 
	\begin{theorem}\label{thm1}
		Let the assumptions \eqref{A1} and \eqref{A2} hold. There is a $\epsilon>0$ \blue{depending on $M_1, M_2, \mathcal H$, $\Omega$, $d_1, d_2, d_3$} such that if
  \[
  \norm{a_0-a^*_\infty}^2_2+\norm{b_0-b^*_\infty}^2_2+\norm{c_0-c^*_\infty}^2_2 \leq \epsilon,
  \]
  then 
  \begin{equation*}
      \norm{a(t)-a^*_\infty}^2_2+\norm{b(t)-b^*_\infty}^2_2+\norm{c(t)-c^*_\infty}^2_2 \leq C e^{-\alpha_4 t} \text{ for all }t\geq 0
  \end{equation*}
  for some positive constants $C, \alpha_4$.
	\end{theorem}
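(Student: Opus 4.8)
The plan is to follow the linearization strategy of Theorem~\ref{thm5}, taking advantage of the fact that the reaction network \eqref{P2} is \emph{symmetric}. Working with the perturbation system \eqref{eq2} for $(c_1,c_2,c_3)=(a-1,b-1,c-1)$, I would use the \emph{unweighted} Lyapunov functional
\[
\mathcal{L}(t)=\norm{c_1(t)}_2^2+\norm{c_2(t)}_2^2+\norm{c_3(t)}_2^2 ,
\]
and show that its linear part possesses a spectral gap that, for $\mathcal{L}(0)$ small, dominates the quadratic nonlinearity. In contrast to the non-symmetric system \eqref{s1}, here no weighting constant $\beta$ is required.

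Multiplying the three equations in \eqref{eq2} by $c_1,c_2,c_3$, integrating over $\Omega$, and summing, I obtain
\[
\frac12\frac{d}{dt}\mathcal{L}=-\sum_{i=1}^3 d_i\norm{\nabla c_i}_2^2+\int_\Omega\Bigl(-3(c_1^2+c_2^2+c_3^2)+(c_1+c_2+c_3)^2\Bigr)dx+\mathcal{N},
\]
where $\mathcal{N}$ gathers the cubic contributions of the quadratic reaction terms. The reaction quadratic form equals $-3\mathcal{L}+\norm{s}_2^2$ with $s:=c_1+c_2+c_3$; it is negative semidefinite and degenerates \emph{only} along the direction $(1,1,1)$, i.e. in $s$, which is precisely the direction of the conserved total mass.

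The crucial point is to regain coercivity in this degenerate direction from diffusion. The conservation law gives $\bar{s}=\int_\Omega s\,dx=0$, so Poincar\'e--Wirtinger yields $\norm{s}_2^2\le P(\Omega)\norm{\nabla s}_2^2$, while Cauchy--Schwarz gives $\norm{\nabla s}_2^2\le 3\sum_i\norm{\nabla c_i}_2^2$; hence $\sum_i d_i\norm{\nabla c_i}_2^2\ge \frac{\min_i d_i}{3P(\Omega)}\norm{s}_2^2$. Decomposing the vector $(c_1,c_2,c_3)$ pointwise into its component along $(1,1,1)$ and its orthogonal complement, and combining the diffusive control of $\norm{s}_2^2$ with the term $-3\mathcal{L}$ (which dissipates the orthogonal part), I would arrive at
\[
\frac12\frac{d}{dt}\mathcal{L}\le -\gamma\,\mathcal{L}+\mathcal{N},\qquad \gamma:=\min\Bigl\{3,\ \frac{\min_i d_i}{P(\Omega)}\Bigr\}>0 .
\]
For the nonlinearity, exactly as in \eqref{s5} I would estimate $\mathcal{N}\le C\sum_i\norm{c_i}_3^3$ and then interpolate, $\norm{c_i}_3^3\le C\norm{c_i}_2^{9/4}\norm{c_i}_6^{3/4}$, using the uniform-in-time $L^\infty$ bound of Theorem~\ref{thm:global-existence-1} (together with $|\Omega|=1$) to absorb $\norm{c_i}_6$, and Jensen's inequality to conclude the superlinear bound $\mathcal{N}\le C\mathcal{L}^{9/8}$. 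This gives $\frac{d}{dt}\mathcal{L}\le -2\gamma\mathcal{L}+C\mathcal{L}^{9/8}$, so choosing $\epsilon$ with $C\epsilon^{1/8}<\gamma$ and running a standard continuity argument keeps $\mathcal{L}(t)\le\epsilon$ for all $t\ge0$; Gronwall's inequality then yields $\mathcal{L}(t)\le \mathcal{L}(0)e^{-\gamma t}$, proving the theorem with $\alpha_4=\gamma$.

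The main obstacle is this final nonlinear step. Because the problem is infinite-dimensional, $L^2$-smallness of the initial datum does \emph{not} imply pointwise smallness of the solution, so the cubic term cannot be made small merely through smallness of $\mathcal{L}$. What rescues the argument is the \emph{uniform-in-time} $L^\infty$ boundedness of solutions, which via interpolation converts the cubic integral into a strictly superlinear power of $\mathcal{L}$ with a fixed constant, allowing the linear dissipation to dominate once $\mathcal{L}(0)$ is small; verifying that this superlinear domination propagates the smallness for all $t\ge0$ (and not merely on a finite time interval) is the delicate part, settled by the continuity argument.
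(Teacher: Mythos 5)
Your treatment of the linear part is correct but takes a genuinely different route from the paper. The paper splits each $c_i$ into its spatial average $\bar c_i$ and fluctuation $c_i-\bar c_i$: diffusion plus Poincar\'e--Wirtinger controls $\sum_i\norm{c_i-\bar c_i}_2^2$, while the reaction quadratic form, after Jensen's inequality and the conservation law $\bar c_1+\bar c_2+\bar c_3=0$, controls $3\sum_i\bar c_i^2$. You instead decompose pointwise in \emph{species space}, writing $(c_1,c_2,c_3)=\tfrac{s}{3}(1,1,1)+c^{\perp}$ with $s=c_1+c_2+c_3$, observing that the reaction form equals $-3\int_\Omega|c^{\perp}|^2dx$ exactly and that the degenerate direction $s$ is recovered from diffusion via $\bar s=0$ and Poincar\'e--Wirtinger applied to $s$. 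Both decompositions exploit the same two structural facts and land on the same spectral gap $\min\{3,\,P(\Omega)^{\pm1}\min_i d_i\}$; yours is arguably more transparent about \emph{why} the reaction form degenerates only along the conserved direction, while the paper's is closer in form to the argument it recycles from Theorem \ref{thm5}.

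For the nonlinear step you import the paper's estimate \eqref{s5} verbatim, and here you inherit a real problem: the interpolation $\norm{c_i}_3^3\le C\norm{c_i}_2^{9/4}\norm{c_i}_6^{3/4}$ is not the standard H\"older interpolation and is in fact false in general (test $c_i=\mathbf{1}_E$ with $|E|=m\to0$: the left side is $m$, the right side is $m^{5/4}$). The correct exponents for $L^3$ between $L^2$ and $L^6$ are $\theta=1/2$, giving $\norm{c_i}_3^3\le\norm{c_i}_2^{3/2}\norm{c_i}_6^{3/2}\le C\mathcal{L}^{3/4}$ after absorbing $\norm{c_i}_6$ by the $L^\infty$ bound --- a \emph{sublinear} power of $\mathcal{L}$, which cannot be dominated by $-\gamma\mathcal{L}$ for small $\mathcal{L}$. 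Indeed no bound of the form $\norm{c_i}_3^3\le C(\norm{c_i}_2^2)^{1+\varepsilon}$ with $C$ depending only on $\norm{c_i}_\infty$ can hold (same indicator-function test). Since you yourself single this out as ``the delicate part,'' you should not take it on faith from \eqref{s5}. A working repair is to retain a fraction of the dissipation: by Gagliardo--Nirenberg in $n\le3$, $\norm{c_i}_3^3\le C\norm{c_i}_{H^1}^{n/2}\norm{c_i}_2^{3-n/2}$, and Young's inequality then gives $\mathcal{N}\le\delta\sum_i\norm{\nabla c_i}_2^2+\delta\mathcal{L}+C_\delta\mathcal{L}^{p}$ with $p>1$; the $\delta$-terms are absorbed into the spectral gap (which requires not spending \emph{all} of the diffusion on the Poincar\'e step for $s$), and the genuinely superlinear remainder closes the continuity argument as you describe.
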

	\begin{proof}
 Remember that the classical solution  to \eqref{eq1'} is  bounded uniformly in both time and space, so is the solution to \eqref{eq2}.
	Define the Lyapunov function	\[
 \mathcal L_2(t)  =  \left( \norm{c_1(t)}^2_2+\norm{ c_2 (t)}^2_2+\norm{ c_3 (t)}^2_2 \right). \]
		Along the solutions to \eqref{eq2}, we have
		\begin{equation}\label{eq6}
  \begin{aligned}
    & -\dfrac{1}{2}\dfrac{d\mathcal L_2(t)}{dt} = -\int_\Omega \left ( c_1 \partial_t c_1 +  c_2  \partial_t  c_2 + c_3  \partial_t  c_3 \right) dx\\
      &=-\sum_{i=1}^{3}d_i \int_\Omega c_i \Delta c_idx  
      -	 \int_\Omega \left( (-2c_1+c_2+c_3)c_1+ (c_1-2c_2+c_3)c_2+(c_1+c_2-2c_3)c_3 \right)dx \\
      &-\int_\Omega \left ( c_1(-c_1 c_2  -c_1 c_3 +2 c_2  c_3 )+ c_2 (-c_1 c_2 - c_2  c_3 +2c_1 c_3 )+ c_3 (2c_1 c_2 - c_2  c_3 -c_1 c_3 ) \right) dx.
  \end{aligned}
		\end{equation}
For the first term in the right-hand side of \eqref{eq6}, applying integration by parts and the Poincar\'e-Wirtinger inequality, we have
		\begin{equation} \label{eq4}
			-\sum_{i=1}^{3}d_i \int_\Omega c_i \Delta c_idx =\sum_{i=1}^{3}d_i  \norm{\nabla c_i}^2_2  \geq P(\Omega) \min \{d_1,d_2,d_3\} (\norm{c_1-\bar c_1 }^2_2+\norm{ c_2 -\bar c_2 }^2_2+\norm{ c_3 -\bar c_3 }^2_2).
		\end{equation}
	 Consider the second term in the right-hand side of \eqref{eq6}, one gets
		\begin{align*}
			&-\int_\Omega \left( (-2c_1+c_2+c_3)c_1+ (c_1-2c_2+c_3)c_2+(c_1+c_2-2c_3)c_3 \right)dx\\
   &=2\int_\Omega(c_1^2+ c_2 ^2+ c_3 ^2-c_1 c_2 -c_1 c_3 - c_2  c_3 ) dx\\
			&=\dfrac{1}{3} \int_\Omega \left( (c_1+ c_2 -2 c_3 )^2+(c_1-2 c_2 + c_3 )^2+(-2c_1+ c_2 + c_3 )^2 \right) dx\\
			&\geq 
			\dfrac{1}{3  }  \left(\int_\Omega (c_1+ c_2 -2 c_3 )dx \right)^2+\dfrac{1}{3  }\left(\int_\Omega (c_1-2 c_2 + c_3 )dx \right)^2+\dfrac{1}{3  }\left(\int_\Omega (-2c_1+ c_2 + c_3 )dx \right)^2\\   
			&=     \left( 2\bar c_1^2+ 2\bar c_2^2+2\bar c_3^2 - 2\bar c_1\bar c_2-2\bar c_1\bar c_3-2\bar c_2\bar c_3 \right) .
		\end{align*}
		This, together with the conservation law $\bar c_1+\bar c_2+\bar c_3=0$, yields
		\begin{equation}\label{eq5}
			-\int_\Omega \left( (-2c_1+c_2+c_3)c_1+ (c_1-2c_2+c_3)c_2+(c_1+c_2-2c_3)c_3 \right)dx \geq  3 \left( \norm{ \bar c_1 }^2_2+\norm{ \bar c_2 }^2_2+\norm{ \bar c_3 }^2_2  \right).
		\end{equation}
 Combining \eqref{eq6}, \eqref{eq4} and \eqref{eq5}, one obtains
 \begin{align*}
  \dfrac{d\mathcal L_2(t)}{dt} &\leq - 2\alpha_4 \mathcal{L}_2(t) \\
 &+2\int_\Omega \left ( c_1(-c_1 c_2  -c_1 c_3 +2 c_2  c_3 )+ c_2 (-c_1 c_2 - c_2  c_3 +2c_1 c_3 )+ c_3 (2c_1 c_2 - c_2  c_3 -c_1 c_3 ) \right)dx,
 \end{align*}
 where $\alpha_4=\min \{ 3,P(\Omega)d_1,P(\Omega)d_2,P(\Omega)d_3\}.$
  The remainder of the proof follows the same as in the proof of Theorem \ref{thm5}, so we omit it here.
 \end{proof}
	\subsection{Instability of boundary equilibrium} Due to the symmetry of the problem \eqref{eq1'}, without loss of generality, we only consider the instability of the boundary equilibrium $(\bar a_\infty,0,0).$ Denote by new variables as perturbation around the boundary equilibria $v=(a-\bar a_\infty,b,c).$ Then, system \eqref{eq1'} can be rewritten in $v$ as follows
	\begin{equation}\label{eq8}
		\begin{cases}
			\partial_t v_1 -d_1\Delta v_1=-\bar a_\infty v_2-\bar a_\infty v_3-v_1v_2-v_1v_3+2v_2v_3,&\text { in } \Omega \times\mathbb R_+,\\
			\partial_t v_2 -d_2\Delta v_2=-\bar a_\infty v_2+2\bar a_\infty v_3-v_1v_2+2v_1v_3-v_2v_3,&\text { in } \Omega \times\mathbb R_+,\\
			\partial_t v_3-d_3\Delta v_3=2 \bar a_\infty v_2-\bar a_\infty v_3+2v_1v_2-v_1v_3-v_2v_3,&\text { in } \Omega \times\mathbb R_+,\\
			\nabla v_i \cdot \nu =0,\ 1\leq i\leq 3,&\text { on } \partial \Omega \times\mathbb R_+,\\
			v(x,0)=v_{0}(x)=(a_0-\bar a_\infty,b_0,c_0),&\text { in } \Omega.
	\end{cases}\end{equation}
	or equivalently $v_t=L_1v+N_1(v)$
	\begin{equation*}
        L_1=  \begin{pmatrix} 
		d_1\Delta & -\bar a_\infty & -\bar a_\infty \\
		0 & d_2\Delta -\bar a_\infty & 2\bar a_\infty\\
		0 & 2 \bar a_\infty & d_3\Delta -\bar a_\infty \\
	\end{pmatrix}
        \quad \text{ and } \quad 
        N_1(v)=\begin{pmatrix} 
		-v_1v_2-v_1v_3+2v_2v_3  \\
		-v_1v_2+2v_1v_3-v_2v_3\\
		2v_1v_2-v_1v_3-v_2v_3\\
	\end{pmatrix}.
        \end{equation*}
Taking the derivative with respect to time, we get
 \begin{equation*}  v_{tt}=L_1v_t+\partial_t\left (N_1(v)\right ).
 \end{equation*}
 Denote by a new variable  $y=(v,v_t)^T,$ then $y$ satisfies the following system 
 \begin{equation}\label{eq16}
     y_t=Ly+N(y),
 \end{equation}
 where  $L=  \begin{pmatrix} 
		L_1 & 0 \\
		0 & L_1\\
	\end{pmatrix}$
	and $N(y)=\begin{pmatrix} 
		N_1(v)  \\
		\partial_t\left (N_1(v)\right )\\
	\end{pmatrix}.$
	As before, we consider the following norms  \[
\norm{y}=\norm{v}_2+\norm{v_t}_2;\  \vertiii{y}   = \norm {v}_{H^2(\Omega)}+\norm{v_t}_{2} .
    \] 
We are now able to present the main theorem of this section.
	\begin{theorem}\label{thm3} Let $\Omega \subset\mathbb R^n,$ for $n\in \{1,2,3\}$,   and assume that  \eqref{A1}-\eqref{A2} hold. \blue{Then there exist positive constants $\theta_0, \tau_0$ depending only on $M, d_1, d_2, d_3, \Omega, \mathcal{H}$ and $ \int_{\Omega}(b_0+c_0)(x)dx$ such that if $y(0) = \delta y_0$ with $\|y_0\| = 1$ and $\delta$ sufficiently small, then
    \begin{equation*}
        \|y(t)- \delta e^{Lt}y_0\| \le C\delta^2e^{2\bar a_\infty t},
    \end{equation*}
    for all
    \begin{equation*}
        0\leq t\leq T^\delta:=\dfrac{1}{\bar a_\infty}\ln \dfrac{\theta_0}{\delta},
    \end{equation*}
    and, at the escape time $T^\delta$, it holds
    \begin{equation*}
        \|y(T^\delta)\| \ge \tau_0 > 0.
    \end{equation*}}
\end{theorem}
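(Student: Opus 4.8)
The plan is to run the same bootstrap instability scheme of Guo--Strauss that established Theorem \ref{thm4}, since the structure of \eqref{eq16} is identical to that of \eqref{p5}: a linearized operator $L$ with a genuinely positive top eigenvalue, perturbed by a purely quadratic nonlinearity $N(y)$. Accordingly I would reprove, for system \eqref{eq8}, the four ingredients that drive the argument: (i) a high-norm energy estimate $\vertiii{y(t)}^2 \le C_\sigma\bra{\int_0^t\norm{y}^2\,ds + \norm{y(0)}^2}$ as in Lemma \ref{lm4}; (ii) a nonlinear bound $\norm{N(y)} \le C_N\vertiii{y}^2$ as in Lemma \ref{lm5}; (iii) an upper bound $\norm{e^{Lt}}_{(L^2(\Omega),L^2(\Omega))} \le C_L e^{\bar a_\infty t}$ as in Lemma \ref{lm6}; and (iv) a lower bound $\norm{e^{Lt}y_0} \ge C_P e^{\bar a_\infty t}$ as in Lemma \ref{lm7}. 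Once these are in place, the final bootstrap argument---choosing $\theta_0$ as a minimum of three ratios, introducing the thresholds $T^\delta, T^*, T^{**}$, and ruling out $T^* < T^\delta$ and $T^{**} < T^\delta$ by contradiction via the Duhamel estimate---transfers essentially verbatim, with $\bar c_\infty - \bar a_\infty$ replaced throughout by $\bar a_\infty = M > 0$. Note in particular that, unlike \eqref{P1}, no mass restriction is needed here, since the growth rate $\bar a_\infty = M$ is automatically positive.

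The heart of the matter, and the place where the computations genuinely differ from Section \ref{subsection:bootstrap}, is the spectral analysis of the new matrix $L_1$. In contrast to \eqref{P1}, where $L_1$ was essentially triangular and its eigenvalues could be read off the diagonal, here the lower-right $2\times 2$ coupling block is full, so the eigenvalues require solving a quadratic. Diagonalizing mode by mode, for each Neumann eigenvalue $\lambda_i \le 0$ the characteristic equation factors as $\bra{d_1\lambda_i - \eta}\bigl[(d_2\lambda_i - \bar a_\infty - \eta)(d_3\lambda_i - \bar a_\infty - \eta) - 4\bar a_\infty^2\bigr] = 0$, so the eigenvalues are $\eta = d_1\lambda_i \le 0$ together with the two roots of the bracket. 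One checks, via a short monotonicity argument in $\lambda_i$, that the larger root is maximized over all $\lambda_i \le 0$ at the constant mode $\lambda_0 = 0$, where the two roots equal $\bar a_\infty$ and $-3\bar a_\infty$; hence $\eta_{\max} = \bar a_\infty$ and the semigroup bound (iii) follows. For the lower bound (iv), I would integrate the linearization of \eqref{eq8} over $\Omega$ and observe that the averaged quantity $s(t) := \int_\Omega(\wh u_2 + \wh u_3)\,dx$ satisfies $s'(t) = \bar a_\infty s(t)$, whence $s(t) = C_1 e^{\bar a_\infty t}$ with $C_1 = \int_\Omega(y_{0,2} + y_{0,3})\,dx$. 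Applying Cauchy--Schwarz as in Lemma \ref{lm7} to bound $\norm{\wh u}_2^2$ and $\norm{\partial_t\wh u}_2^2$ from below by fixed multiples of $s(t)^2$ and $s'(t)^2$ then yields $\norm{e^{Lt}y_0} \ge C_P e^{\bar a_\infty t}$, provided $C_1 \ne 0$---which explains the appearance of $\int_\Omega(b_0 + c_0)\,dx$ in the constants.

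The two energy-type ingredients (i) and (ii) are routine adaptations. For (i) I would multiply \eqref{eq8} and its time-differentiated version by $v_i$ and $\partial_t v_i$, use the uniform bound $\norm{v_i}_\infty \le 1$ guaranteed by $\vertiii{y} < \sigma$ together with the Sobolev embedding $H^2(\Omega) \hookrightarrow L^\infty(\Omega)$ (valid for $n \le 3$), obtain Gronwall-type differential inequalities for $\norm{v}_2^2$ and $\norm{v_t}_2^2$, and finally recover the $H^2$-control by reading \eqref{eq8} as an elliptic system $L_1 v = v_t - N_1(v)$ and invoking elliptic regularity, exactly as in the passage \eqref{g11}--\eqref{eq21}. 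For (ii), since $N_1(v)$ is quadratic, $\norm{N_1(v)}_2 \le C\norm{v}_\infty\norm{v}_2 \le C C_{SI}\vertiii{y}^2$, and an analogous product estimate handles $\partial_t N_1(v)$.

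I expect the main obstacle to lie entirely in item (iv) and the accompanying spectral bookkeeping: correctly identifying the unstable direction of the non-symmetric, non-normal operator $L_1$ and isolating the single scalar functional $\int_\Omega(b + c)\,dx$ whose linearized evolution grows like $e^{\bar a_\infty t}$, as opposed to \eqref{P1} where $\int_\Omega b\,dx$ alone was the growing quantity. Once that growing mode is pinned down and matched against the sharp upper bound $\eta_{\max} = \bar a_\infty$ from (iii)---so that the linear flow's growth rate is known exactly---the nonlinear correction is controlled by the quadratic estimate (ii) on the instability time scale $T^\delta \sim \bar a_\infty^{-1}\ln(1/\delta)$, and the escape conclusion $\norm{y(T^\delta)} \ge \tau_0$ follows precisely as in the proof of Theorem \ref{thm4}.
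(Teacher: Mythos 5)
Your proposal follows essentially the same route as the paper's proof: transfer the analogues of Lemmas \ref{lm4} and \ref{lm5} (valid because the nonlinearity is again purely quadratic and the solution uniformly bounded), establish the semigroup upper bound by factoring the mode-by-mode characteristic polynomial $(d_1\lambda_i-\eta)\bigl[(d_2\lambda_i-\bar a_\infty-\eta)(d_3\lambda_i-\bar a_\infty-\eta)-4\bar a_\infty^2\bigr]$ and showing via monotonicity in $\lambda_i$ that the largest root is $g(0)=\bar a_\infty$, obtain the lower bound from the spatially averaged linear ODEs with the nonvanishing coefficient $\int_\Omega(b_0+c_0)\,dx$, and then rerun the bootstrap with $\bar c_\infty-\bar a_\infty$ replaced by $\bar a_\infty=M>0$. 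Your observation that $s(t)=\int_\Omega(\wh v_2+\wh v_3)\,dx$ satisfies the decoupled scalar equation $s'=\bar a_\infty s$ is a slightly streamlined version of the paper's explicit solution of the full $2\times 2$ averaged system, but it isolates exactly the same growing mode, so the two arguments coincide in substance.
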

    
To prove Theorem \ref{thm3}, we can use similar arguments as in the proof of Theorem \ref{thm4}. Since \eqref{eq1'} is similar to \eqref{p1} in the sense that the right hand side are purely quadratic and the solutions is uniformly bounded in time, similar results to Lemmas \ref{lm4} and \ref{lm5} also hold true for \eqref{eq1'}. 
We only need to check the bounds of the semigroup $\{e^{Lt}\}$, which are shown in the following lemmas.
\begin{lemma}\label{lm3}
	 There exists a positive constant $C_L$ such that $$\norm{ e^{Lt}}_{(L^2(\Omega),L^2(\Omega))} \leq C_Le^{ \bar a_\infty t}  .$$
	\end{lemma}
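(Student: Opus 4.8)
The plan is to mimic the proof of Lemma \ref{lm6}: diagonalize the diffusion part in the orthonormal basis of Neumann--Laplacian eigenfunctions, thereby reducing $L_1$ to a family of $3\times 3$ constant matrices, one per Fourier mode, whose spectral abscissa can be controlled explicitly. Let $0=\lambda_0>\lambda_1\ge\lambda_2\ge\cdots\to-\infty$ be the Neumann eigenvalues. Since $e^{Lt}=\mathrm{diag}(e^{L_1t},e^{L_1t})$, it suffices to bound $e^{L_1t}$, and on the $i$-th mode $L_1$ acts as
\[
A_i=\begin{pmatrix} d_1\lambda_i & -\bar a_\infty & -\bar a_\infty \\ 0 & d_2\lambda_i-\bar a_\infty & 2\bar a_\infty \\ 0 & 2\bar a_\infty & d_3\lambda_i-\bar a_\infty \end{pmatrix}.
\]
Expanding along the first column, one eigenvalue is $d_1\lambda_i\le 0$, and the remaining two are the eigenvalues of the symmetric block $B_i=\left(\begin{smallmatrix} d_2\lambda_i-\bar a_\infty & 2\bar a_\infty \\ 2\bar a_\infty & d_3\lambda_i-\bar a_\infty\end{smallmatrix}\right)$, whose larger root is
\[
\eta_+(\lambda_i)=\frac{(d_2+d_3)\lambda_i-2\bar a_\infty+\sqrt{(d_2-d_3)^2\lambda_i^2+16\bar a_\infty^2}}{2}.
\]

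The key step is the bound $\eta_+(\lambda_i)\le \bar a_\infty$ for every $\lambda_i\le 0$. Since $4\bar a_\infty-(d_2+d_3)\lambda_i>0$, the equivalent inequality $\sqrt{(d_2-d_3)^2\lambda_i^2+16\bar a_\infty^2}\le 4\bar a_\infty-(d_2+d_3)\lambda_i$ may be squared, and using $(d_2-d_3)^2-(d_2+d_3)^2=-4d_2d_3$ it collapses to $d_2d_3\lambda_i^2\ge 2\bar a_\infty(d_2+d_3)\lambda_i$, which holds because the left-hand side is nonnegative while the right-hand side is nonpositive for $\lambda_i\le 0$ (recall $\bar a_\infty=M>0$). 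Equality occurs only at $\lambda_i=0$, where $A_0$ has the three distinct eigenvalues $\{0,\bar a_\infty,-3\bar a_\infty\}$ and is therefore diagonalizable. Furthermore $\eta_+$ is strictly increasing in its argument, since $\eta_+'(s)\ge\tfrac12\bigl[(d_2+d_3)-|d_2-d_3|\bigr]=\min\{d_2,d_3\}>0$; hence on every non-constant mode $\eta_+(\lambda_i)\le\eta_+(\lambda_1)=:\mu<\bar a_\infty$, which provides a spectral gap between the constant mode and all others.

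I expect the main obstacle to be that $A_i$ is not symmetric, so a bound on its eigenvalues does not by itself bound $\|e^{A_it}\|$ uniformly in $i$. I would resolve this through the block upper-triangular structure $A_i=\left(\begin{smallmatrix} d_1\lambda_i & w^\top \\ 0 & B_i\end{smallmatrix}\right)$ with $w=(-\bar a_\infty,-\bar a_\infty)^\top$ and $B_i$ symmetric, giving
\[
e^{A_it}=\begin{pmatrix} e^{d_1\lambda_i t} & \displaystyle\int_0^t e^{d_1\lambda_i(t-s)}\,w^\top e^{B_is}\,ds \\[4pt] 0 & e^{B_it}\end{pmatrix}.
\]
By symmetry $\|e^{B_it}\|\le e^{\eta_+(\lambda_i)t}$, and the coupling term is controlled by $\sqrt2\,\bar a_\infty\int_0^t e^{d_1\lambda_i(t-s)}e^{\eta_+(\lambda_i)s}\,ds\le\sqrt2\,\bar a_\infty\,t\,e^{\max\{d_1\lambda_i,\eta_+(\lambda_i)\}t}$. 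For the constant mode diagonalizability yields directly $\|e^{A_0t}\|\le Ce^{\bar a_\infty t}$; for every non-constant mode the exponent satisfies $\max\{d_1\lambda_i,\eta_+(\lambda_i)\}\le\mu<\bar a_\infty$, so the factor $t$ is absorbed into $Ce^{\bar a_\infty t}$ with a constant independent of $i$. Assembling the mode contributions via Parseval's identity then yields $\|e^{Lt}\|_{(L^2(\Omega),L^2(\Omega))}\le C_Le^{\bar a_\infty t}$. The delicate point is precisely this uniform-in-mode absorption of the resonant $t$-factor, rather than the eigenvalue computation itself.
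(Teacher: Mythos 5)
Your proposal is correct, and its eigenvalue analysis coincides with the paper's: both decompose $L_1$ over the Neumann--Laplacian eigenbasis, identify the eigenvalue $d_1\lambda_i\le 0$ and the two roots of the quadratic with discriminant $(d_2-d_3)^2\lambda_i^2+16\bar a_\infty^2$, and bound the larger root by $\bar a_\infty$ via its monotonicity in $\lambda_i$ (your direct squaring argument and the paper's computation of $g'\ge 0$ are interchangeable here). Where you genuinely go beyond the paper is in the passage from the spectral bound to the semigroup bound: the paper stops at ``all eigenvalues of $L$ are bounded above by $\bar a_\infty$'' and declares the lemma proved, which is not by itself sufficient since $A_i$ is not normal and a bound on eigenvalues does not control $\|e^{A_i t}\|$ uniformly in $i$. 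Your block upper-triangular exponential formula, the observation that the lower-right block $B_i$ is symmetric so $\|e^{B_i t}\|\le e^{\eta_+(\lambda_i)t}$, the uniform spectral gap $\max\{d_1\lambda_i,\eta_+(\lambda_i)\}\le\mu<\bar a_\infty$ on nonconstant modes to absorb the resonant factor $t$, and the diagonalizability of $A_0$ (distinct eigenvalues $0,\bar a_\infty,-3\bar a_\infty$) on the constant mode together supply exactly the missing step, with constants independent of the mode. In short: same route for the core computation, but your version actually closes the argument that the paper leaves implicit.
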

	\begin{proof}
	    Recall that $0=\lambda_0>\lambda_1\geq \lambda_2\geq \dots\geq \lambda_n\geq\dots \to -\infty$ are the eigenvalues of the Neumann Laplacian. Then, for  an index $i$ fixed, eigenvalues $\eta$ of the linear operator $L$ are described by
\begin{equation*}
       \begin{vmatrix}
          d_1\lambda_i-\eta  & -\bar a_\infty & -\bar a_\infty \\
           0 & d_2\lambda_i -\bar a_\infty-\eta& 2\bar a_\infty\\
           0 & 2 \bar a_\infty& d_3\lambda_i -\bar a_\infty-\eta
       \end{vmatrix}
       =0.
\end{equation*}
That means, $\eta=d_1\lambda_i\leq 0,$ or 
\begin{equation*} 
    \eta^2 -\left( (d_2+d_3)\lambda_i-2 \bar a_\infty\right) \eta + d_2d_3\lambda_i^2-(d_2+d_3)\lambda_i \bar a_\infty-3\bar a_\infty^2=0.
\end{equation*}
This equation always has two distinct roots, the larger one is determined as follows
\[
\eta_{\max}=\dfrac{(d_2+d_3)\lambda_i-2 \bar a_\infty+\sqrt{(d_2-d_3)^2\lambda_i^2 +16 \bar a_\infty^2}}{2}:=g(\lambda_i).
\]
By simple calculation, we have
\begin{align*}
 g'(\lambda_i)&=\dfrac{1}{2}\left(  d_2+d_3+\dfrac{(d_2-d_3)^2\lambda_i}{\sqrt{(d_2-d_3)^2\lambda_i^2+16\bar a_\infty^2}}  \right)\\
 &=\dfrac{2d_2d_3(d_2-d_3)^2\lambda_i^2+8(d_2+d_3)^2\bar a_\infty^2}{\sqrt{(d_2-d_3)^2\lambda_i^2+16\bar a_\infty^2}\left ( (d_2+d_3)\sqrt{(d_2-d_3)^2\lambda_i^2+16\bar a_\infty^2}-(d_2-d_3)^2\lambda_i \right)}\geq 0.
\end{align*}
 Therefore,  $g$    is   increasing, thus,    $\eta_{\max} \leq g(0)=\bar a_\infty.$ That means, all eigenvalues of $L$ is bounded above by $\bar a_\infty$. Lemma \ref{lm3} is proved.
	\end{proof}

	\begin{lemma}
		For any $\norm{y_0}=1$ with $C_1=\dfrac{1}{2}\int_\Omega (y_{0,1}(x)+y_{0,2}(x)) dx \ne 0,$ there exists a positive constant $C_P$ \blue{depending only on $\Omega$ and $y_0$} such that \[
  \norm {e^{Lt}y_0}  \geq C_P e^{\bar a_\infty t}, \] for all $t>0.$
	\end{lemma}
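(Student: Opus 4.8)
The plan is to reproduce the argument of Lemma \ref{lm7} almost verbatim: the exponential growth of $\norm{e^{Lt}y_0}$ is driven entirely by the spatial averages of the solution, so I would first reduce the linearized PDE to a finite-dimensional ODE for these averages and then isolate the unstable mode. Dropping the quadratic terms $N_1(v)$ in \eqref{eq8} gives the linear system for $\wh u=(\wh u_1,\wh u_2,\wh u_3)$ governed by $L_1$ with homogeneous Neumann conditions and initial datum $y_0$; since $L=\mathrm{diag}(L_1,L_1)$ is block diagonal it suffices to analyse the $L_1$-flow together with its time derivative.

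First I would integrate each of the three equations over $\Omega$. The Neumann condition annihilates the Laplacian terms, leaving a closed constant-coefficient ODE system for the averages $p_i(t):=\int_\Omega \wh u_i(x,t)\,dx$, namely $\dot p=\bar a_\infty M p$ with $M=\begin{pmatrix} 0 & -1 & -1 \\ 0 & -1 & 2 \\ 0 & 2 & -1\end{pmatrix}$. The key structural step is the spectral analysis of $M$: a direct computation gives the characteristic polynomial $-\eta(\eta-1)(\eta+3)$, so the eigenvalues are $0,1,-3$, and the unstable eigenvalue $1$ (i.e. $\bar a_\infty$ for $\bar a_\infty M$, consistent with Lemma \ref{lm3}) is detected by the left eigenvector $(0,1,1)$. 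Consequently the single combination $p_2+p_3$ decouples and satisfies $\frac{d}{dt}(p_2+p_3)=\bar a_\infty(p_2+p_3)$, whence $p_2(t)+p_3(t)=(p_2(0)+p_3(0))e^{\bar a_\infty t}=2C_1 e^{\bar a_\infty t}$, where $C_1=\tfrac12\int_\Omega(y_{0,2}+y_{0,3})\,dx$ is the projection of the initial datum onto the unstable mode. This is the exact analogue of the identity $\int_\Omega \wh u_2=C_1 e^{(\bar c_\infty-\bar a_\infty)t}$ used in Lemma \ref{lm7}.

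Next I would convert this into the desired lower bound via Jensen's inequality: because $|\Omega|=1$, each $\int_\Omega \wh u_i^2\,dx\ge p_i^2$, so $\norm{\wh u(t)}_2^2\ge p_2^2+p_3^2\ge \tfrac12(p_2+p_3)^2=2C_1^2 e^{2\bar a_\infty t}$, giving $\norm{\wh u(t)}_2\ge \sqrt2\,|C_1|\,e^{\bar a_\infty t}$. Differentiating the averaged identity, $\int_\Omega \partial_t(\wh u_2+\wh u_3)\,dx=2C_1\bar a_\infty e^{\bar a_\infty t}$, and the same Jensen step applied to $\partial_t\wh u$ yields $\norm{\partial_t\wh u(t)}_2\ge \sqrt2\,|C_1|\,\bar a_\infty e^{\bar a_\infty t}$. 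Since $\norm{y}=\norm{\wh u}_2+\norm{\partial_t\wh u}_2$, combining the two estimates gives $\norm{e^{Lt}y_0}\ge C_P e^{\bar a_\infty t}$ with $C_P=\sqrt2\,|C_1|\min\{1,\bar a_\infty\}$, which depends only on $\Omega$ and $y_0$ as claimed.

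This result is essentially a linear-algebra computation, so I do not expect a genuine analytic obstacle; the only point requiring care is identifying the correct unstable combination, which here is $p_2+p_3=\int_\Omega(\wh u_2+\wh u_3)\,dx$, the average of the two perturbations $b$ and $c$ of the depleted species at the equilibrium $(\bar a_\infty,0,0)$. This is exactly the functional picked out by the left eigenvector $(0,1,1)$, and it explains both the role of the hypothesis $C_1\neq 0$ (which guarantees the unstable mode is actually excited by the initial datum, so that no cancellation prevents the exponential growth) and the dependence on $\int_\Omega(b_0+c_0)\,dx$ recorded in Theorem \ref{thm3}. The clean decoupling of $p_2+p_3$ is a direct manifestation of the symmetry of the two reactions acting on $b$ and $c$.
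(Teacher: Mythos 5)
Your proof is correct, and it follows the same overall strategy as the paper's: integrate the linear system over $\Omega$ to obtain a constant-coefficient ODE for the averages $p_i(t)=\int_\Omega \wh v_i\,dx$, solve it, and pass to a lower bound on the $L^2$-norms via Jensen (using $|\Omega|=1$). The difference is in execution, and your version is actually the more robust one. The paper solves the full $3\times 3$ system explicitly (obtaining the $C_2 e^{-3\bar a_\infty t}$ and $C_3$ terms as in \eqref{l2_1}), whereas you project onto the left eigenvector $(0,1,1)$ of the unstable eigenvalue and work only with the decoupled scalar $p_2+p_3=2C_1e^{\bar a_\infty t}$; this yields the constant $\sqrt2\,|C_1|$ instead of the paper's $\sqrt5\,|C_1|$, which is immaterial. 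More significantly, for the $\|\partial_t\wh v\|_2$ estimate the paper bounds below by $\tfrac13\bigl(\int_\Omega(\partial_t\wh v_1+\partial_t\wh v_2+\partial_t\wh v_3)\,dx\bigr)^2$; but summing the three equations of \eqref{eq9} shows $p_1+p_2+p_3$ is conserved, so that quantity is identically zero, and indeed the displayed formulas for $\frac{d}{dt}\int_\Omega\wh v_i\,dx$ in the paper are inconsistent with differentiating \eqref{l2_1} (the factors of $\bar a_\infty$ and the coefficient of $C_1$ in the first line are off). Your route — applying Jensen to $\dot p_2$ and $\dot p_3$ only, using $\dot p_2+\dot p_3=2C_1\bar a_\infty e^{\bar a_\infty t}$ — gives a correct derivative bound and repairs this slip. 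Finally, note that the lemma as stated defines $C_1=\tfrac12\int_\Omega(y_{0,1}+y_{0,2})\,dx$, while both the paper's proof and your argument require $C_1=\tfrac12\int_\Omega(y_{0,2}+y_{0,3})\,dx$ (consistent with the quantity $\int_\Omega(b_0+c_0)\,dx$ appearing in Theorem \ref{thm3}); you correctly identified the intended combination, and the statement contains a typo.
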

	\begin{proof}
		Let us consider the linear system
		\begin{equation}\label{eq9}
			\begin{cases}
				\partial_t \wh v_1 -d_1\Delta \wh v_1=-\bar a_\infty \wh v_2-\bar a_\infty \wh v_3,&\text { in } \Omega \times\mathbb R_+,\\
				\partial_t \wh v_2 -d_2\Delta \wh v_2=-\bar a_\infty \wh v_2+2\bar a_\infty \wh v_3,&\text { in } \Omega \times\mathbb R_+,\\
				\partial_t v_3-d_3\Delta \wh v_3=2\bar a_\infty \wh v_2-\bar a_\infty \wh v_3,&\text { in } \Omega \times\mathbb R_+,\\
				\nabla \wh v_i \cdot \nu =0,\ 1\leq i\leq 3, &\text { on } \partial\Omega \times\mathbb R_+,\\
				\wh v(x,0)=y_0(x),&\text { in } \Omega .
		\end{cases}\end{equation}   
Taking the integration over $\Omega,$ one obtains form \eqref{eq9} that
		\begin{equation}\label{l1}
			\begin{cases}
				\frac{d}{dt} \int_\Omega \wh v_1(x,t) dx =-\bar a_\infty \int_\Omega \wh v_2(x,t) dx-\bar a_\infty \int_\Omega \wh v_3(x,t) dx,\\
				\frac{d}{dt} \int_\Omega \wh v_2(x,t) dx=-\bar a_\infty\int_\Omega \wh v_2(x,t) dx+2\bar a_\infty\int_\Omega \wh v_3(x,t) dx,\\
				\frac{d}{dt} \int_\Omega \wh v_3(x,t) dx=2\bar a_\infty\int_\Omega \wh v_2(x,t) dx-\bar a_\infty \int_\Omega \wh v_3(x,t) dx,\\
		\end{cases}\end{equation}  
		Therefore, we can compute
		\begin{equation}\label{l2_1}
			\begin{cases}
				\int_\Omega \wh v_1(x,t) dx = -2C_1 e^{\bar a_\infty t}+C_3,\\
				\int_\Omega \wh v_2(x,t) dx= C_1e^{\bar a_\infty t}+C_2e^{-3\bar a_\infty t},\\
				\int_\Omega \wh v_3(x,t) dx=C_1e^{\bar a_\infty t}-C_2 e^{-3\bar a_\infty t} ,\\
		\end{cases}\end{equation} 
		where the constants $C_1=\frac{1}{2}\int_\Omega (y_{0,2}(x)+y_{0,3}(x)) dx,\ C_2=\frac{1}{2}\int_\Omega (y_{0,2}(x)-y_{0,3}(x)) dx$ and $C_3=\int_\Omega (y_{0,1}(x) + y_{0,2}(x) + y_{0,3}(x)) dx.$
		Thus,
		\begin{align*}
			\norm{e^{Lt}y_0}_2^2&= \int_\Omega \left( |\wh v_1(x,t)|^2+|\wh v_2(x,t)|^2+|\wh v_3(x,t)|^2\right) dx\notag \\ 
			& \geq  \left ( \left (\int_\Omega \wh v_1(x,t)dx \right)^2+\left (\int_\Omega \wh v_2(x,t)dx \right)^2+\left(\int_\Omega \wh v_3(x,t)dx \right)^2 \right) \notag \\
			& =  \left( 6C_1^2 e^{2\bar a_\infty t} -2C_1C_3 e^{\bar a_\infty t}+C_3^2+2C_2^2e^{-6\bar a_\infty t} \right)\notag  \\
   & =  \left( 5C_1^2 e^{2\bar a_\infty t} + (C_1e^{\bar a_\infty t}-C_3)^2+2C_2^2e^{-6\bar a_\infty t} \right)\notag  \\
			&\geq 5C_1^2 e^{2\bar a_\infty t}  .
		\end{align*}
		  Hence,
		\begin{equation}\label{l3}
		    \norm{e^{Lt}y_0}_2\geq \sqrt{5}|C_1| e^{\bar a_\infty t}.
		\end{equation}  
From \eqref{l2_1}, we obtain
\begin{equation*} 
			\begin{cases}
				\frac{d}{dt} \int_\Omega \wh v_1(x,t) dx =2C_2e^{-3\bar a_\infty t},  \\
				\frac{d}{dt} \int_\Omega \wh v_2(x,t) dx=C_1e^{\bar a_\infty t}-5C_2e^{-3\bar a_\infty t}, \\
				\frac{d}{dt} \int_\Omega \wh v_3(x,t) dx=C_1e^{\bar a_\infty t}+3C_2e^{-3\bar a_\infty t}. \\
		\end{cases}\end{equation*}  
  Then,
 \begin{align*}
      \norm{\partial_t \wh v  }_2^2&=\int_\Omega\left ((\partial_t \wh v_1)^2+ (\partial_t \wh v_2)^2+(\partial_t \wh v_3)^2 \right) dx\\
      &\ge \frac 13\int_{\Omega}\bra{\partial_t \wh v_1 + \partial_t \wh v_2 + \partial_t \wh v_3}^2dx\\
      &\geq  \frac 13\left (\int_\Omega (\partial_t \wh v_1 +  \partial_t \wh v_2 + \partial_t \wh v_3 )  dx\right)^2\\
      &=\frac 13 \left(2C_1^2e^{2\bar a_\infty t}+ 38C_2^2e^{-6\bar a_\infty t}-4C_1C_2e^{-2\bar a_\infty t}  \right)\\
      &\geq \frac 13C_1^2 e^{2\bar a_\infty t} .
 \end{align*}
By this,
\begin{equation}\label{l4}
\norm{\partial_t(e^{Lt}y_0)}_2\geq   \frac 13|C_1|  e^{\bar a_\infty t}.
\end{equation}
It can be deduced from \eqref{l3} and \eqref{l4} that
  \[
  \norm {e^{Lt}y_0}  \geq C_P e^{\bar a_\infty t},
  \]
  where the constant $C_P= (\sqrt{5}+1/3)|C_1|   $ depends on $C_1$ and $\Omega.$
	\end{proof}
	
\subsection*{Acknowledgement}  
The authors would like to thank Prof. Laurent Desvillettes and Prof. Klemens Fellner for their fruitful discussions.

The first author was supported by the Ernst Mach scholarship from OeAD, Austria's Agency for Education and Internationalization, Mobility Programmers and Cooperation. The work was completed during the first author's visit to University of Graz, and the university's hospitality is greatly acknowledged.

\providecommand{\bysame}{\leavevmode\hbox to3em{\hrulefill}\thinspace}
\providecommand{\MR}{\relax\ifhmode\unskip\space\fi MR }
\providecommand{\MRhref}[2]{%
	\href{http://www.ams.org/mathscinet-getitem?mr=#1}{#2}
}
\providecommand{\href}[2]{#2}

\end{document}